\newtheorem{theorem}{Theorem}
\newtheorem{corollary}[theorem]{Corollary}
\newtheorem{lemma}[theorem]{Lemma}
\newtheorem{proposition}[theorem]{Proposition}
\newtheorem{remark}[theorem]{Remark}
\newenvironment{proof}[1][Proof]{\noindent\textbf{#1.} }{\ \rule{0.5em}{0.5em}}
\begin{document}

\title{Semantical conditions for the definability of functions and relations}
\author{M. Campercholi, D. Vaggione}
\maketitle

\begin{abstract}
Let $\mathcal{L}\subseteq \mathcal{L}^{\prime }$ be first order languages,
let $R\in \mathcal{L}^{\prime }-\mathcal{L}$ be a relation symbol, and let $%
\mathcal{K}$ be a class of $\mathcal{L}^{\prime }$-structures. In this paper
we present semantical conditions equivalent to the existence of an $\mathcal{%
L}$-formula $\varphi \left( \vec{x}\right) $ such that $\mathcal{K}\vDash
\varphi (\vec{x})\leftrightarrow R(\vec{x})$, and $\varphi $ has a specific
syntactical form (e.g., quantifier free, positive and quantifier free,
existential horn, etc.). For each of these definability results for
relations we also present an analogous version for the definability of
functions. Several applications to natural definability questions in
universal algebra have been included; most notably definability of principal
congruences. The paper concludes with a look at term-interpolation in
classes of structures with the same techniques used for definability. Here
we obtain generalizations of two classical term-interpolation results:
Pixley's theorem \cite{Pixley} for quasiprimal algebras, and the
Baker-Pixley Theorem \cite{ba-pi} for finite algebras with a majority term.
\end{abstract}

\section*{Introduction}

Let $\mathcal{L}$ be a first order language and $\mathcal{K}$ a class of $%
\mathcal{L}$-structures. If $R\in \mathcal{L}$ is an $n$-ary relation
symbol, we say that a formula $\varphi (\vec{x})$ \emph{defines} $R$ \emph{in%
} $\mathcal{K}$ if%
\begin{equation*}
\mathcal{K}\vDash \varphi (\vec{x})\leftrightarrow R(\vec{x})\text{.}
\end{equation*}%
Let $f_{1},\ldots ,f_{m}\in \mathcal{L}$ be $n$-ary function symbols. Given
an $\mathcal{L}$-structure $\mathbf{A}$, let $\vec{f}^{\mathbf{A}%
}:A^{n}\rightarrow A^{m}$ be the function defined by 
\begin{equation*}
\vec{f}^{\mathbf{A}}\left( \vec{a}\right) =(f_{1}^{\mathbf{A}}(\vec{a}%
),\ldots ,f_{m}^{\mathbf{A}}(\vec{a}))\text{.}
\end{equation*}%
We say that a formula $\varphi (\vec{x},\vec{z})$ \emph{defines} $\vec{f}$ 
\emph{in} $\mathcal{K}$ if%
\begin{equation*}
\mathcal{K}\vDash \varphi (\vec{x},\vec{z})\leftrightarrow \vec{f}(\vec{x})=%
\vec{z}\text{.}
\end{equation*}%
Let $\mathcal{L}\subseteq \mathcal{L}^{\prime }$ be first order languages,
let $R\in \mathcal{L}^{\prime }-\mathcal{L}$ be an $n$-ary relation symbol
(resp. $f_{1},\ldots ,f_{m}\in \mathcal{L}^{\prime }-\mathcal{L}$ be $n$-ary
function symbols), and let $\mathcal{K}$ be a class of $\mathcal{L}^{\prime
} $-structures. Let $S$ be any of the following sets:

\begin{itemize}
\item $\{$finite conjunctions of atomic $\mathcal{L}$-formulas$\}$,

\item $\{$positive open $\mathcal{L}$-formulas$\}$,

\item $\{$open Horn $\mathcal{L}$-formulas$\}$,

\item $\{$open $\mathcal{L}$-formulas$\}$,

\item $\{$primitive positive $\mathcal{L}$-formulas$\}$,

\item $\{$existential positive $\mathcal{L}$-formulas$\}$,

\item $\{$existential Horn $\mathcal{L}$-formulas$\}$,

\item $\{$existential $\mathcal{L}$-formulas$\}$.
\end{itemize}

In this paper we give semantical conditions characterizing when $R$ (resp. $%
\vec{f}$) is definable in $\mathcal{K}$ by a formula of $S$. The results
obtained provide a natural and unified way to handle familiar questions on
definability of functions and relations in classes of structures. Being able
to look at a great variety of definability questions within the same
framework allows for a deeper understanding of the definability phenomena in
general. Evidence of this is our finding of several new results on
definability of principal congruences, and the generalizations of the
Baker-Pixley Theorem \cite{ba-pi} and of the Pixley Theorem \cite{Pixley}.
The applications throughout the paper provide a good sample of how the
results are put to work, in some cases providing direct proofs of known
facts and in others discovering new theorems.

In Section 2 we study the definability of functions and relations by
(positive) open formulas. We give some immediate applications to
definability of relative principal congruences in quasivarieties
(Proposition \ref{aplicacion a dpc} and Corollary \ref{aplicacion a dpc 1}).
In Section 3 we study the definability by open Horn formulas. In Section 4
we focus on the definability by conjunctions of atomic formulas. We give
some consequences on definability of principal congruences and the
Fraser-Horn property (Proposition \ref{aplicacion a dpc 2} and Corollary \ref%
{aplicacion a dpc 3}). We also apply the characterizations to give new
natural proofs of two results on the translation of (positive) open formulas
to conjunctions of equations (Proposition \ref{traduccion} and Corollary \ref%
{traduccion para simples}). In Section 5 we address definability by
existential formulas. Subsection 5.1 is concerned with primitive positive
functions. As an application we characterize primitive positive functions in
Stone algebras and De Morgan algebras. Section 6 is devoted to term
interpolation of functions in classes. First we apply the previous results
to characterize when a function is term valued by cases in a class (Theorems %
\ref{term valued with open cases} and \ref{term valued with positive cases}%
). We use this to give generalizations of Pixley's theorem \cite{Pixley}
characterizing quasiprimal algebras as those finite algebras in which every
function preserving the inner isomorphisms is a term function (Theorems \ref%
{generalizacion imp no trivial de pixley} and \ref{Pixley}). We conclude the
section giving two generalizations of the Baker-Pixley Theorem \cite{ba-pi}
on the existence of terms representing functions in finite algebras with a
majority term (Theorems \ref{Baker-Pixley para clases caso localmente
finitas} and \ref{Baker-Pixley para aritmeticas}).

Even though most results in the paper are true in more general contexts (via
the same ideas), we have preferred to write the results in a more concise
manner. The intention is to provide the non-specialist a more accessible
presentation, with the hope that he/she can find further natural
applications in universal algebra.

\section{Notation}

As usual, $\mathbb{I}(\mathcal{K})$, $\mathbb{S}(\mathcal{K})$, $\mathbb{P}(%
\mathcal{K})$ and $\mathbb{P}_{u}(\mathcal{K})$ denote the classes of
isomorphic images, substructures, direct products and ultraproducts of
elements of $\mathcal{K}$. We write $\mathbb{P}_{\mathrm{fin}}(\mathcal{K})$
to denote the class $\{\mathbf{A}_{1}\times \ldots \times \mathbf{A}%
_{n}:n\geq 1$ and each $\mathbf{A}_{i}\in \mathcal{K}\}$. For a class of
algebras $\mathcal{K}$ let $\mathbb{Q}(\mathcal{K})$ (resp. $\mathbb{V}(%
\mathcal{K})$) denote the \textit{quasivariety }(resp. \textit{variety}) 
\textit{generated by }$\mathcal{K}$. If $\mathcal{L}\subseteq \mathcal{L}%
^{\prime }$ are first order languages, for an $\mathcal{L}^{\prime }$%
-structure $\mathbf{A}$ we use $\mathbf{A}_{\mathcal{L}}$ to denote the
reduct of $\mathbf{A}$ to the language $\mathcal{L}$. If $\mathbf{A},\mathbf{%
B}$ are $\mathcal{L}$-structures, we write $\mathbf{A}\leq \mathbf{B}$ to
express that $\mathbf{A}$ is a substructure of $\mathbf{B}$.

Let $S_{1},\ldots ,S_{k}$ be non-empty sets, let $n\in N$. For $i=1,\ldots
,k $, let $f_{i}:S_{i}^{n}\rightarrow S_{i}$ be an $n$-ary operation on $%
S_{i}$. We use $f_{1}\times \ldots \times f_{k}$ to denote the function $%
f_{1}\times \ldots \times f_{k}:(S_{1}\times \ldots \times
S_{k})^{n}\rightarrow S_{1}\times \ldots \times S_{k}$ given by%
\begin{equation*}
f_{1}\times \ldots \times f_{k}((a_{1}^{1},\ldots ,a_{k}^{1}),\ldots
,(a_{1}^{n},\ldots ,a_{k}^{n}))=(f_{1}(a_{1}^{1},\ldots ,a_{1}^{n}),\ldots
,f_{k}(a_{k}^{1},\ldots ,a_{k}^{n}))\text{.}
\end{equation*}%
Also, if $R_{i}\subseteq S_{i}^{n}$ are $n$-ary relations on $S_{i}$, then
we write $R_{1}\times \ldots \times R_{k}$ to denote the $n$-ary relation
given by%
\begin{equation*}
R_{1}\times \ldots \times R_{k}=\{(a_{1}^{1},\ldots ,a_{k}^{1}),\ldots
,(a_{1}^{n},\ldots ,a_{k}^{n})):(a_{i}^{1},\ldots ,a_{i}^{n})\in R_{i},\ \
i=1,\ldots ,k\}\text{.}
\end{equation*}%
We observe that if $\mathbf{S}_{1},\ldots ,\mathbf{S}_{k}$ are $\mathcal{L}$%
-structures and $f\in \mathcal{L}$ is an $n$-ary operation symbol, then $f^{%
\mathbf{S}_{1}}\times \ldots \times f^{\mathbf{S}_{k}}=f^{\mathbf{S}%
_{1}\times \ldots \times \mathbf{S}_{k}}$. Also, if $R\in \mathcal{L}$ is an 
$n$-ary relation symbol, then $R^{\mathbf{S}_{1}}\times \ldots \times R^{%
\mathbf{S}_{k}}=R^{\mathbf{S}_{1}\times \ldots \times \mathbf{S}_{k}}$.

For a quasivariety $\mathcal{Q}$ and $\mathbf{A}\in \mathcal{Q}$, we use $%
\mathrm{Con}_{\mathcal{Q}}(\mathbf{A})$ to denote the lattice of relative
congruences of $\mathbf{A}$. If $a,b\in A$, with $\mathbf{A}\in \mathcal{Q}$%
, let $\theta _{\mathcal{Q}}^{\mathbf{A}}(a,b)$ denote the \emph{relative
principal congruence} generated by $(a,b)$. When $\mathcal{Q}$ is a variety
we drop the subscript and just write $\theta ^{\mathbf{A}}(a,b)$. The
quasivariety $\mathcal{Q}$ has \emph{definable relative principal
congruences }if there exists a first order formula $\varphi (x,y,z,w)$ in
the language of $\mathcal{Q}$ such that%
\begin{equation*}
\theta _{\mathcal{Q}}^{\mathbf{A}}(a,b)=\{(c,d):\mathcal{Q}\vDash \varphi
(a,b,c,d)\}
\end{equation*}%
for any $a,b\in A$, $\mathbf{A}\in \mathcal{Q}$. The quasivariety $\mathcal{Q%
}$ has the \emph{relative congruence extension property }if for every $%
\mathbf{A}\leq \mathbf{B}\in \mathcal{Q}$ and $\theta \in \mathrm{Con}_{%
\mathcal{Q}}(\mathbf{A})$ there is a $\delta \in \mathrm{Con}_{\mathcal{Q}}(%
\mathbf{B})$ such that $\theta =\delta \cap A^{2}$.

Let $\mathcal{Q}_{RFSI}$ (resp. $\mathcal{Q}_{RS}$) denote the class of
relative finitely subdirectly irreducible (resp. simple) members of $%
\mathcal{Q}$. When $\mathcal{Q}$ is a variety, we write $\mathcal{Q}_{FSI}$
in place of $\mathcal{Q}_{RFSI}$, $\mathrm{Con}(\mathbf{A})$ in place of $%
\mathrm{Con}_{\mathcal{Q}}(\mathbf{A})$, etc.

Let%
\begin{equation*}
\begin{array}{l}
\mathrm{At}(\mathcal{L})=\{\text{atomic}\mathrm{\ }\mathcal{L}\text{-formulas%
}\}\text{,} \\ 
\mathrm{\pm At}(\mathcal{L})=\mathrm{At}(\mathcal{L})\cup \{\lnot \alpha
:\alpha \in \mathrm{At}(\mathcal{L})\}\text{,} \\ 
\mathrm{Op}(\mathcal{L})=\{\varphi :\varphi \ \text{is\ an\ open}\mathrm{\ }%
\mathcal{L}\text{-formula}\}\text{,} \\ 
\mathrm{OpHorn}(\mathcal{L})=\{\varphi :\varphi \ \text{is\ an\ open\ Horn}%
\mathrm{\ }\mathcal{L}\text{-formula}\}\text{.}%
\end{array}%
\end{equation*}%
If $S$ is a set of formulas, we define%
\begin{equation*}
\begin{array}{l}
\left[ \bigwedge S\right] =\{\varphi _{1}\wedge \ldots \wedge \varphi
_{n}:\varphi _{1},\ldots ,\varphi _{n}\in S,\ n\geq 1\}\text{,} \\ 
\left[ \bigvee S\right] =\{\varphi _{1}\vee \ldots \vee \varphi _{n}:\varphi
_{1},\ldots ,\varphi _{n}\in S,\ n\geq 1\}\text{,} \\ 
\left[ \forall S\right] =\{\forall x_{1}\ldots \forall x_{n}\varphi :\varphi
\in S,\ n\geq 0\}\text{,} \\ 
\left[ \exists S\right] =\{\exists x_{1}\ldots \exists x_{n}\varphi :\varphi
\in S,\ n\geq 0\}\text{.}%
\end{array}%
\end{equation*}

\section{Definability by (positive) open formulas}

\begin{theorem}
\label{(positive) open para relaciones}Let $\mathcal{L}\subseteq \mathcal{L}%
^{\prime }$ be first order languages and let $R\in \mathcal{L}^{\prime }-%
\mathcal{L}$ be an $n$-ary relation symbol. For a class $\mathcal{K}$ of $%
\mathcal{L}^{\prime }$-structures, the following are equivalent:

\begin{enumerate}
\item[(1)] There is a formula in $\mathrm{Op}(\mathcal{L})$ (resp. $\left[
\bigvee \bigwedge \mathrm{At}(\mathcal{L})\right] $) which defines $R$ in $%
\mathcal{K}$.

\item[(2)] For all $\mathbf{A},\mathbf{B}\in \mathbb{P}_{u}(\mathcal{K})$,
all $\mathbf{A}_{0}\leq \mathbf{A}_{\mathcal{L}}$, $\mathbf{B}_{0}\leq 
\mathbf{B}_{\mathcal{L}}$, all isomorphisms (resp. homomorphisms) $\sigma :%
\mathbf{A}_{0}\rightarrow \mathbf{B}_{0}$, and all $a_{1},\ldots ,a_{n}\in
A_{0}$, we have that $(a_{1},\ldots ,a_{n})\in R^{\mathbf{A}}$ implies $%
(\sigma (a_{1}),\ldots ,\sigma (a_{n}))\in R^{\mathbf{B}}$.
\end{enumerate}

\noindent Moreover, if $\mathcal{K}_{\mathcal{L}}$ has finitely many
isomorphism types of $n$-generated substructures and each one is finite,
then we can remove the operator $\mathbb{P}_{u}$ from (2).
\end{theorem}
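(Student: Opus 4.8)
The plan is to establish $(1)\Leftrightarrow(2)$ with the usual preservation-theorem toolkit of ultraproducts and compactness, handling the two cases (namely $\mathrm{Op}(\mathcal L)$ with isomorphisms, and $\left[\bigvee\bigwedge\mathrm{At}(\mathcal L)\right]$ with homomorphisms) in parallel, since they differ only in replacing ``complete quantifier-free $\mathcal L$-type'' by ``atomic $\mathcal L$-type'' and ``isomorphism'' by ``homomorphism''. For the easy direction $(1)\Rightarrow(2)$, I would first note that $\forall\vec x\,(\varphi(\vec x)\leftrightarrow R(\vec x))$ is first order, so by \L o\'s it continues to hold on $\mathbb P_u(\mathcal K)$; hence $\varphi$ defines $R$ throughout $\mathbb P_u(\mathcal K)$. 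Since an open $\mathcal L$-formula is absolute between an $\mathcal L$-structure and its substructures and is preserved by $\mathcal L$-isomorphisms (and a positive open formula is preserved by $\mathcal L$-homomorphisms), transporting $(a_1,\dots,a_n)\in R^{\mathbf A}$ down to $\mathbf A_0$, across $\sigma$, and back up into $\mathbf B$ yields $(\sigma(a_1),\dots,\sigma(a_n))\in R^{\mathbf B}$.

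For $(2)\Rightarrow(1)$ the engine is a double use of compactness. Let $\Sigma$ be the set of open $\mathcal L$-formulas $\psi(\vec x)$ (resp.\ positive open formulas) with $\mathcal K\vDash R(\vec x)\to\psi(\vec x)$; it is closed under finite conjunction, and every such conjunction has the required syntactic shape. It suffices to show that some finite subconjunction of $\Sigma$ also implies $R$ over $\mathcal K$, since that conjunction then defines $R$. If not, compactness (an ultraproduct indexed by the finite subsets of $\Sigma$) yields $\mathbf B\in\mathbb P_u(\mathcal K)$ and a tuple $\vec b$ realizing all of $\Sigma$ with $\vec b\notin R^{\mathbf B}$. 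Next I realize a partner on the $R$-side: the type $\{R(\vec x)\}$ together with the complete quantifier-free $\mathcal L$-type of $\vec b$ (resp.\ together with $\{\lnot\alpha:\alpha\in\mathrm{At}(\mathcal L),\ \mathbf B\nvDash\alpha(\vec b)\}$) is realizable in $\mathbb P_u(\mathcal K)$: any finite obstruction would, by definition of $\Sigma$, put into $\Sigma$ a formula that $\vec b$ violates, contradicting the choice of $\vec b$. This produces $\mathbf A\in\mathbb P_u(\mathcal K)$ and $\vec a\in R^{\mathbf A}$ whose quantifier-free $\mathcal L$-type equals (resp.\ whose atomic $\mathcal L$-type is contained in) that of $\vec b$. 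That agreement of types is exactly what lets $a_i\mapsto b_i$ extend to an $\mathcal L$-isomorphism (resp.\ $\mathcal L$-homomorphism) $\sigma:\langle\vec a\rangle\to\langle\vec b\rangle$ of the generated substructures; applying $(2)$ gives $\vec b=\sigma(\vec a)\in R^{\mathbf B}$, a contradiction.

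The step I expect to be most delicate is this second realization together with the passage from matching types to an actual structural map: one must check that term-equalities are respected (well-definedness, and injectivity in the isomorphism case), that basic relations transfer in the correct direction, and --- crucially for the positive case --- that realizing the \emph{negated}-atomic type of $\vec b$ is what forces $\sigma$ to be merely a homomorphism rather than an embedding, matching the homomorphism hypothesis in $(2)$.

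For the final clause I would prove that $(2)$ with $\mathbb P_u$ deleted already implies $(2)$. The hypothesis gives only finitely many pointed isomorphism types $(\mathbf C,\vec c)$ of $n$-generated $\mathcal L$-substructures, each finite --- equivalently, only finitely many complete quantifier-free $\mathcal L$-types of $n$-tuples are realized in $\mathcal K_{\mathcal L}$. Writing $\mathbf A=\prod_U\mathbf A_i$ and $\mathbf B=\prod_V\mathbf B_l$, a $U$-large set of coordinates shares the pointed type of $\vec a$ and lies in $\{i:\vec a_i\in R^{\mathbf A_i}\}$; picking one such $i_0$ gives $\mathbf A_{i_0}\in\mathcal K$ with $\vec a_{i_0}\in R^{\mathbf A_{i_0}}$ and a pointed $\mathcal L$-isomorphism $(\langle\vec a\rangle,\vec a)\cong(\langle\vec a_{i_0}\rangle,\vec a_{i_0})$ (here finiteness makes the ultraproduct of the $U$-many equal pointed types collapse to a single one). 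Running the same analysis on the $\mathbf B$-side for \emph{every} index $l$ in a $V$-large set $L$, I transport $\sigma$ to a map $\langle\vec a_{i_0}\rangle\to\langle\vec b_l\rangle$ and apply the $\mathbb P_u$-free form of $(2)$ to conclude $\vec b_l\in R^{\mathbf B_l}$ for all $l\in L$; since $L\in V$, \L o\'s gives $\vec b\in R^{\mathbf B}$. The point needing care is that the conclusion must hold on a $V$-large set of coordinates rather than at one index, which is why the $\mathbf B$-side is handled uniformly over all of $L$.
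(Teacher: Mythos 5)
Your proposal is correct, but it organizes both halves of the hard work differently from the paper. For (2)$\Rightarrow$(1) the paper argues from the $R$-side: for each $\mathbf{A}\in \mathbb{P}_{u}(\mathcal{K})$ and $\vec{a}\in R^{\mathbf{A}}$ it shows that the $\pm$atomic (resp.\ atomic) diagram $\Delta ^{\vec{a},\mathbf{A}}$ of $\vec{a}$ implies $R$ over $\mathbb{P}_{u}(\mathcal{K})$ --- this is exactly your ``matching types yield a structural map, then apply (2)'' step --- then finitizes each diagram by compactness, and finitizes the resulting disjunction $\bigvee_{\mathbf{A},\vec{a}}\bigwedge \Delta _{0}^{\vec{a},\mathbf{A}}\leftrightarrow R$ by a second application of compactness, so the defining formula comes out directly in $\left[ \bigvee \bigwedge \mathrm{\pm At}(\mathcal{L})\right] $ (resp.\ $\left[ \bigvee \bigwedge \mathrm{At}(\mathcal{L})\right] $). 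You argue instead from the consequence side (the \L o\'s--Tarski/Lyndon-style separation argument): the open (resp.\ positive open) consequences of $R$ axiomatize $R$, and a putative counterexample $\vec{b}$ is confronted with a realizing partner $\vec{a}$ whose type it dominates. The two arguments are logically dual, hinge on the same key lemma, and each uses compactness twice; the paper's version delivers the normal form for free, whereas yours ends with a conjunction of (positive) open formulas and needs one extra, trivial, disjunctive-normal-form step to land literally in $\left[ \bigvee \bigwedge \mathrm{At}(\mathcal{L})\right] $. The divergence is sharper in the moreover part: the paper reruns the syntactic construction, observing that up to $\mathcal{K}$-equivalence only finitely many atomic formulas in $\vec{x}$ occur, so the infinite disjunction collapses to a genuine first-order formula; you instead prove the purely semantic implication that (2) without $\mathbb{P}_{u}$ already yields (2) with $\mathbb{P}_{u}$ via a \L o\'s-theorem analysis of coordinates, which is arguably more conceptual but requires the uniform treatment of a $V$-large set of $\mathbf{B}$-coordinates that you correctly flag as the delicate point. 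One degenerate case that both you and the paper pass over silently: if $R^{\mathbf{A}}=\emptyset $ for every $\mathbf{A}$, condition (2) holds vacuously while in the positive case one must admit the empty disjunction, since no formula of $\left[ \bigvee \bigwedge \mathrm{At}(\mathcal{L})\right] $ need define the empty relation; this is a convention issue shared with the original proof, not a gap specific to your argument.
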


\begin{proof}
(1)$\Rightarrow $(2). Suppose that $\varphi (\vec{x})\in \mathrm{Op}(%
\mathcal{L})$ (resp. $\varphi (\vec{x})\in \left[ \bigvee \bigwedge \mathrm{%
At}(\mathcal{L})\right] $) defines $R$ in $\mathcal{K}$. Note that $\varphi (%
\vec{x})$ defines $R$ in $\mathbb{P}_{u}(\mathcal{K})$. Suppose that $%
\mathbf{A},\mathbf{B}\in \mathbb{P}_{u}(\mathcal{K})$, $\mathbf{A}_{0}\leq 
\mathbf{A}_{\mathcal{L}}$, $\mathbf{B}_{0}\leq \mathbf{B}_{\mathcal{L}}$, $%
\sigma :\mathbf{A}_{0}\rightarrow \mathbf{B}_{0}$ is an isomorphism (resp. a
homomorphism), and fix $a_{1},\ldots ,a_{n}\in A_{0}$ such that $%
(a_{1},\ldots ,a_{n})\in R^{\mathbf{A}}$. Since%
\begin{equation*}
\mathbf{A}\vDash \varphi (\vec{a})
\end{equation*}%
and $\varphi (\vec{x})\in \mathrm{Op}(\mathcal{L})$, we have that%
\begin{equation*}
\mathbf{A}_{0}\vDash \varphi (\vec{a})\text{.}
\end{equation*}%
Since $\sigma $ is an isomorphism (resp. $\sigma $ is a homomorphism and $%
\varphi (\vec{x})\in \left[ \bigvee \bigwedge \mathrm{At}(\mathcal{L})\right]
$), we have that%
\begin{equation*}
\mathbf{B}_{0}\vDash \varphi (\sigma (a_{1}),\ldots ,\sigma (a_{n}))\text{.}
\end{equation*}%
As $\varphi (\vec{x})\in \mathrm{Op}(\mathcal{L})$, it follows that%
\begin{equation*}
\mathbf{B}\vDash \varphi (\sigma (a_{1}),\ldots ,\sigma (a_{n}))\text{,}
\end{equation*}%
and thus $(\sigma (a_{1}),\ldots ,\sigma (a_{n}))\in R^{\mathbf{B}}$.

(2)$\Rightarrow $(1). Let $\mathbf{A}\in \mathbb{P}_{u}(\mathcal{K})$ and $%
\vec{a}\in R^{\mathbf{A}}$. Define%
\begin{equation*}
\Delta ^{\vec{a},\mathbf{A}}=\{\alpha (\vec{x}):\alpha \in \mathrm{\pm At}(%
\mathcal{L})\text{, }\mathbf{A}\vDash \alpha (\vec{a})\}
\end{equation*}%
(resp. $\Delta ^{\vec{a},\mathbf{A}}=\{\alpha (\vec{x}):\alpha \in \mathrm{At%
}(\mathcal{L})$ and $\mathbf{A}\vDash \alpha (\vec{a})\}$). Take $\mathbf{B}%
\in \mathbb{P}_{u}(\mathcal{K})$ and $\vec{b}\in B^{n}$ such that $\mathbf{B}%
\vDash \Delta ^{\vec{a},\mathbf{A}}(\vec{b})$. Let%
\begin{equation*}
\mathbf{A}_{0}=\text{the substructure of }\mathbf{A}_{\mathcal{L}}\text{
generated by }a_{1},\ldots ,a_{n}\text{,}
\end{equation*}%
\begin{equation*}
\mathbf{B}_{0}=\text{the substructure of }\mathbf{B}_{\mathcal{L}}\text{
generated by }b_{1},\ldots ,b_{n}\text{.}
\end{equation*}%
Since $\mathbf{B}\vDash \Delta ^{\vec{a},\mathbf{A}}(\vec{b})$, we have that%
\begin{equation*}
a_{1}\mapsto b_{1},\dots ,a_{n}\mapsto b_{n},\ \ f_{1}^{\mathbf{A}}(\vec{a}%
)\mapsto c_{1},\dots ,f_{m}^{\mathbf{A}}(\vec{a})\mapsto c_{m}
\end{equation*}%
extends to an isomorphism (resp. homomorphism) from $\mathbf{A}_{0}$ onto $%
\mathbf{B}_{0}$, which by (2) says that $\vec{b}\in R^{\mathbf{B}}$. So we
have proved that%
\begin{equation*}
\mathbb{P}_{u}(\mathcal{K})\vDash \left( \dbigwedge\limits_{\alpha \in
\Delta ^{\vec{a},\mathbf{A}}}\alpha (\vec{x})\right) \rightarrow R\left( 
\vec{x}\right) \text{.}
\end{equation*}%
By compactness, there is a finite subset $\Delta _{0}^{\vec{a},\mathbf{A}%
}\subseteq \Delta ^{\vec{a},\mathbf{A}}$ such that%
\begin{equation*}
\mathbb{P}_{u}(\mathcal{K})\vDash \left( \dbigwedge\limits_{\alpha \in
\Delta _{0}^{\vec{a},\mathbf{A}}}\alpha (\vec{x})\right) \rightarrow R(\vec{x%
})\text{.}
\end{equation*}%
Next note that%
\begin{equation*}
\mathbb{P}_{u}(\mathcal{K})\vDash \left( \dbigvee\limits_{\mathbf{A}\in 
\mathbb{P}_{u}(\mathcal{K})\text{, }\vec{a}\in R^{\mathbf{A}%
}}\dbigwedge\limits_{\alpha \in \Delta _{0}^{\vec{a},\mathbf{A}}}\alpha (%
\vec{x})\right) \leftrightarrow R(\vec{x})\text{,}
\end{equation*}%
which by compactness says that%
\begin{equation*}
\mathbb{P}_{u}(\mathcal{K})\vDash \left(
\dbigvee\limits_{i=1}^{k}\dbigwedge\limits_{\alpha \in \Delta _{0}^{\vec{a}%
_{i},\mathbf{A}_{i}}}\alpha (\vec{x})\right) \leftrightarrow R(\vec{x})
\end{equation*}%
for some $\mathbf{A}_{1},\ldots ,\mathbf{A}_{k}\in \mathbb{P}_{u}(\mathcal{K}%
)$, $\vec{a}_{1}\in R^{\mathbf{A}_{1}},\ldots ,\vec{a}_{k}\in R^{\mathbf{A}%
_{k}}$.

Now we prove the moreover part. Suppose $\mathcal{K}_{\mathcal{L}}$ has
finitely many isomorphism types of $n$-generated substructures and each one
is finite. Thus, there is a finite list of atomic $\mathcal{L}$-formulas $%
\alpha _{1}(\vec{x}),\ldots ,\alpha _{k}(\vec{x})$ such that for every
atomic $\mathcal{L}$-formula $\alpha (\vec{x})$, there is $j\in \{1,\dots
,k\}$ satisfying $\mathcal{K}\vDash \alpha (\vec{x})\leftrightarrow \alpha
_{j}(\vec{x})$. Assume (2) holds without the ultraproduct operator, we prove
(1). By considerations similar to the above we have that%
\begin{equation*}
\mathcal{K}\vDash \left( \dbigvee\limits_{\mathbf{A}\in \mathcal{K}\text{, }%
\vec{a}\in R^{\mathbf{A}}}\dbigwedge\limits_{\alpha \in \Delta ^{\vec{a},%
\mathbf{A}}}\alpha (\vec{x})\right) \leftrightarrow R(\vec{x})\text{.}
\end{equation*}%
Since each $\Delta ^{\vec{a},\mathbf{A}}$ can be supposed to be included in%
\begin{equation*}
\{\alpha _{1}(\vec{x}),\ldots ,\alpha _{k}(\vec{x})\}\cup \{\lnot \alpha
_{1}(\vec{x}),\ldots ,\lnot \alpha _{k}(\vec{x})\}
\end{equation*}%
(resp. $\{\alpha _{1}(\vec{x}),\ldots ,\alpha _{k}(\vec{x})\}$), we have
(after removing redundancies) that,%
\begin{equation*}
\dbigvee\limits_{\mathbf{A}\in \mathcal{K}\text{, }\vec{a}\in R^{\mathbf{A}%
}}\dbigwedge\limits_{\alpha \in \Delta ^{\vec{a},\mathbf{A}}}\alpha (\vec{x})
\end{equation*}%
is a first order formula.

Finally, note that the remaining implication is already taken care of by (1)$%
\Rightarrow $(2) above.
\end{proof}

Here is a direct consequence of the above theorem.

\begin{corollary}
\label{(positive) open locally finite para relaciones}Let $\mathcal{K}$ be
any class of $\mathcal{L}$-algebras contained in a locally finite variety.
Suppose $\mathbf{A}\rightarrow R^{\mathbf{A}}$ is a map which assigns to
each $\mathbf{A}\in \mathcal{K}$ an $n$-ary relation $R^{\mathbf{A}%
}\subseteq A^{n}$. The following are equivalent:

\begin{enumerate}
\item[(1)] There is a formula in $\mathrm{Op}(\mathcal{L})$ (resp. $\left[
\bigvee \bigwedge \mathrm{At}(\mathcal{L})\right] $) which defines $R$ in $%
\mathcal{K}$.

\item[(2)] For all $\mathbf{A},\mathbf{B}\in \mathcal{K}$, all $\mathbf{A}%
_{0}\leq \mathbf{A}_{\mathcal{L}}$, $\mathbf{B}_{0}\leq \mathbf{B}_{\mathcal{%
L}}$, all isomorphisms (resp. homomorphisms) $\sigma :\mathbf{A}%
_{0}\rightarrow \mathbf{B}_{0}$, and all $a_{1},\ldots ,a_{n}\in A_{0}$ we
have that $(a_{1},\ldots ,a_{n})\in R^{\mathbf{A}}$ implies $(\sigma
(a_{1}),\ldots ,\sigma (a_{n}))\in R^{\mathbf{B}}$.
\end{enumerate}
\end{corollary}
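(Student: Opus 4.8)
The plan is to deduce this from Theorem \ref{(positive) open para relaciones}, and in particular from its ``moreover'' clause, by recasting the externally given relation as the interpretation of a fresh relation symbol. Set $\mathcal{L}' = \mathcal{L}\cup\{R\}$ with $R$ a new $n$-ary relation symbol, and for each $\mathbf{A}\in\mathcal{K}$ let $\mathbf{A}'$ be the $\mathcal{L}'$-structure expanding $\mathbf{A}$ by $R^{\mathbf{A}'} := R^{\mathbf{A}}$. Put $\mathcal{K}' = \{\mathbf{A}' : \mathbf{A}\in\mathcal{K}\}$. Then $\mathbf{A}'_{\mathcal{L}} = \mathbf{A}_{\mathcal{L}}$ for every $\mathbf{A}\in\mathcal{K}$, so $\mathcal{K}'_{\mathcal{L}} = \mathcal{K}_{\mathcal{L}}$, and a formula of $\mathrm{Op}(\mathcal{L})$ (resp. of $[\bigvee\bigwedge\mathrm{At}(\mathcal{L})]$) defines $R$ in $\mathcal{K}$ exactly when it defines the symbol $R$ in $\mathcal{K}'$ in the sense of the theorem. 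Likewise, condition (2) of the corollary is literally condition (2) of the theorem for $\mathcal{K}'$ once the operator $\mathbb{P}_u$ has been dropped, since $R^{\mathbf{A}'} = R^{\mathbf{A}}$ and the substructures, isomorphisms and homomorphisms in play all live in the $\mathcal{L}$-reducts.

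Granting this translation, the corollary reduces to verifying that the hypothesis of the ``moreover'' clause holds for $\mathcal{K}'$, namely that $\mathcal{K}'_{\mathcal{L}} = \mathcal{K}_{\mathcal{L}}$ has only finitely many isomorphism types of $n$-generated substructures and that each one is finite. This is where the local finiteness hypothesis does its work. Let $\mathcal{V}$ be a locally finite variety with $\mathcal{K}\subseteq\mathcal{V}$, and let $\mathbf{F} = \mathbf{F}_{\mathcal{V}}(n)$ be the free $\mathcal{V}$-algebra on $n$ generators; being $n$-generated, $\mathbf{F}$ is finite by local finiteness. Every $n$-generated substructure of a member of $\mathcal{K}_{\mathcal{L}}$ is an $n$-generated algebra in $\mathcal{V}$, since $\mathcal{V}$ is closed under subalgebras, and is therefore a homomorphic image of $\mathbf{F}$. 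A finite algebra has only finitely many congruences, so $\mathbf{F}$ has only finitely many quotients up to isomorphism, each of them finite; this bounds the isomorphism types of $n$-generated substructures exactly as required.

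With the ``moreover'' hypothesis secured, Theorem \ref{(positive) open para relaciones} applied to $\mathcal{K}'$ gives the equivalence of its (1) and its $\mathbb{P}_u$-free (2), which are precisely (1) and (2) of the present corollary. The only step demanding genuine care is the counting in the second paragraph: one must ensure that ``finitely many isomorphism types of $n$-generated substructures, each finite'' is what local finiteness actually delivers, and not merely the weaker assertion that each such substructure is finite. The passage through the finite free algebra $\mathbf{F}_{\mathcal{V}}(n)$ and its finitely many quotients is what upgrades ``each finite'' to ``finitely many types, each finite,'' and that is the crux of the reduction.
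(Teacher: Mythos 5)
Your proof is correct and follows essentially the same route as the paper, which simply applies the moreover part of Theorem \ref{(positive) open para relaciones} to the class $\{(\mathbf{A},R^{\mathbf{A}}):\mathbf{A}\in\mathcal{K}\}$. The only difference is that you spell out the (correct) verification, via the finite free algebra $\mathbf{F}_{\mathcal{V}}(n)$ and its finitely many quotients, that local finiteness yields finitely many isomorphism types of $n$-generated substructures, each finite — a detail the paper leaves implicit.
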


\begin{proof}
Apply the moreover part of Theorem \ref{(positive) open para relaciones} to
the class $\{(\mathbf{A},R^{\mathbf{A}}):\mathbf{A}\in \mathcal{K}\}$.
\end{proof}

As we shall see next it is easy to derive the functional version of Theorem %
\ref{(positive) open para relaciones}.

\begin{theorem}
\label{(positive) open}Let $\mathcal{L}\subseteq \mathcal{L}^{\prime }$ be
first order languages and let $f_{1},\ldots ,f_{m}\in \mathcal{L}^{\prime }-%
\mathcal{L}$ be $n$-ary function symbols. For a class $\mathcal{K}$ of $%
\mathcal{L}^{\prime }$-structures, the following are equivalent:

\begin{enumerate}
\item[(1)] There is a formula in $\mathrm{Op}(\mathcal{L})$ (resp. $\left[
\bigvee \bigwedge \mathrm{At}(\mathcal{L})\right] $) which defines $\vec{f}$
in $\mathcal{K}$.

\item[(2)] For all $\mathbf{A},\mathbf{B}\in \mathbb{P}_{u}(\mathcal{K})$,
all $\mathbf{A}_{0}\leq \mathbf{A}_{\mathcal{L}}$, $\mathbf{B}_{0}\leq 
\mathbf{B}_{\mathcal{L}}$, all isomorphisms (resp. homomorphisms) $\sigma :%
\mathbf{A}_{0}\rightarrow \mathbf{B}_{0}$, and all $a_{1},\ldots ,a_{n}\in
A_{0}$ such that\newline
$f_{1}^{\mathbf{A}}(\vec{a}),\ldots ,f_{m}^{\mathbf{A}}(\vec{a})\in A_{0}$,
we have $\sigma (f_{i}^{\mathbf{A}}(\vec{a}))=f_{i}^{\mathbf{B}}(\sigma
(a_{1}),\ldots ,\sigma (a_{n}))$ for all $i\in \{1,\ldots ,m\}$.
\end{enumerate}

\noindent Moreover, if $\mathcal{K}_{\mathcal{L}}$ has finitely many
isomorphism types of $(n+m)$-generated substructures and each one is finite,
then we can remove the operator $\mathbb{P}_{u}$ from (2).
\end{theorem}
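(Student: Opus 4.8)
The plan is to reduce the functional version (Theorem~\ref{(positive) open}) to the relational version (Theorem~\ref{(positive) open para relaciones}) that we have already proved, rather than redoing the ultraproduct/compactness argument from scratch. The key observation is that defining the tuple of functions $\vec{f}$ is the same as defining the graph relation $G \subseteq A^{n+m}$ given by $G^{\mathbf{A}} = \{(\vec{a},\vec{c}) : \vec{f}^{\mathbf{A}}(\vec{a}) = \vec{c}\}$. So I would introduce a new $(n+m)$-ary relation symbol $R \in \mathcal{L}'' - \mathcal{L}$ interpreted in each structure as this graph, and then apply Theorem~\ref{(positive) open para relaciones} to $R$.

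First I would make the graph-relation correspondence precise. Given the class $\mathcal{K}$ of $\mathcal{L}'$-structures, expand each $\mathbf{A} \in \mathcal{K}$ to an $\mathcal{L}' \cup \{R\}$-structure by setting $R^{\mathbf{A}} = \{(\vec{a},\vec{f}^{\mathbf{A}}(\vec{a})) : \vec{a} \in A^n\}$, and let $\mathcal{K}^+$ be the resulting class. The crucial point is that a formula $\varphi(\vec{x},\vec{z}) \in \mathrm{Op}(\mathcal{L})$ (resp. in $[\bigvee\bigwedge\mathrm{At}(\mathcal{L})]$) defines $\vec{f}$ in $\mathcal{K}$ precisely when it defines $R$ in $\mathcal{K}^+$, since both conditions read $\mathcal{K} \vDash \varphi(\vec{x},\vec{z}) \leftrightarrow \vec{f}(\vec{x}) = \vec{z}$. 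This gives the equivalence of condition~(1) in both theorems verbatim.

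Next I would check that condition~(2) of Theorem~\ref{(positive) open} matches condition~(2) of Theorem~\ref{(positive) open para relaciones} applied to $R$ in $\mathcal{K}^+$. Here one must be careful: the relational~(2) quantifies over $\mathbf{A},\mathbf{B} \in \mathbb{P}_u(\mathcal{K}^+)$, and I would note that $\mathbb{P}_u$ commutes with the graph expansion, i.e. every member of $\mathbb{P}_u(\mathcal{K}^+)$ is (up to the obvious identification) the graph expansion of a member of $\mathbb{P}_u(\mathcal{K})$, because the defining condition ``$R(\vec{x},\vec{z}) \leftrightarrow \vec{f}(\vec{x})=\vec{z}$'' is first-order and hence preserved under ultraproducts. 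Unwinding the relational~(2) for a tuple $(\vec{a},\vec{c}) \in R^{\mathbf{A}}$ with $\vec{a},\vec{c} \in A_0$ forces $\vec{c} = \vec{f}^{\mathbf{A}}(\vec{a})$, so $\vec{f}^{\mathbf{A}}(\vec{a}) \in A_0^m$; the conclusion $(\sigma(\vec{a}),\sigma(\vec{c})) \in R^{\mathbf{B}}$ then says exactly $\sigma(f_i^{\mathbf{A}}(\vec{a})) = f_i^{\mathbf{B}}(\sigma(\vec{a}))$ for each $i$. This is verbatim the functional~(2), so the two conditions correspond.

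The main obstacle, and the step to handle with care, is the bookkeeping in the moreover clause. The relational theorem's moreover hypothesis concerns $n$-generated substructures of $\mathcal{K}^+$-reducts, but the relation $R$ here has arity $n+m$, so applying it requires $(n+m)$-generated substructures of $\mathcal{K}_{\mathcal{L}}$ to be finitely many up to isomorphism and each finite --- which is exactly the hypothesis stated in Theorem~\ref{(positive) open}. I would confirm that the $\mathcal{L}$-reduct of $\mathcal{K}^+$ coincides with $\mathcal{K}_{\mathcal{L}}$ (since $R \notin \mathcal{L}$ and the $f_i \notin \mathcal{L}$), so the finiteness condition transfers directly and the operator $\mathbb{P}_u$ may be dropped on both sides simultaneously. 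With this alignment verified, Theorem~\ref{(positive) open} follows immediately from Theorem~\ref{(positive) open para relaciones}.
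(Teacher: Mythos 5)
Your proposal is correct and is essentially the paper's own argument: the paper likewise introduces the $(n+m)$-ary graph relation $R$, forms the expanded class $\mathcal{K}_{R}$, checks that condition (2) transfers, and invokes Theorem \ref{(positive) open para relaciones} together with its moreover clause (noting $(\mathcal{K}_{R})_{\mathcal{L}}=\mathcal{K}_{\mathcal{L}}$). Your extra care about $\mathbb{P}_{u}$ commuting with the graph expansion is exactly the point the paper leaves as ``not hard to check.''
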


\begin{proof}
The implication (1)$\Rightarrow $(2) is analogous to (1)$\Rightarrow $(2) of
Theorem \ref{(positive) open para relaciones}.

We prove (2)$\Rightarrow $(1). Let $R\notin \mathcal{L}^{\prime }$ be an $%
(n+m)$-ary relational symbol. For each $\mathbf{A}\in \mathcal{K}$ let%
\begin{equation*}
R^{\mathbf{A}}=\{\left( a_{1},\ldots ,a_{n},f_{1}^{\mathbf{A}}(\vec{a}%
),\ldots ,f_{m}^{\mathbf{A}}(\vec{a})\right) :a_{1},\ldots ,a_{n}\in A\}%
\text{.}
\end{equation*}%
Define the following class of $\mathcal{L}^{\prime }\cup \{R\}$-structures 
\begin{equation*}
\mathcal{K}_{R}=\{\left( \mathbf{A},R^{\mathbf{A}}\right) :\mathbf{A}\in 
\mathcal{K}\}\text{.}
\end{equation*}%
It is not hard to check that the isomorphism (resp. homomorphism) version of
(2) in Theorem \ref{(positive) open para relaciones} holds for the languages 
$\mathcal{L}\subseteq \mathcal{L}^{\prime }\cup \{R\}$, the relation symbol $%
R$ and the class $\mathcal{K}_{R}$. Thus there is $\varphi \left(
x_{1},\dots ,x_{n},z_{1},\dots ,z_{m}\right) \in \mathrm{Op}(\mathcal{L})$
(resp. $\left[ \bigvee \bigwedge \mathrm{At}(\mathcal{L})\right] $) such
that $\mathcal{K}_{R}\vDash \varphi \left( \vec{x},\vec{z}\right)
\leftrightarrow R\left( \vec{x},\vec{z}\right) $. Now as $\mathcal{K}%
_{R}\vDash \vec{f}(\vec{x})=\vec{z}\leftrightarrow R\left( \vec{x},\vec{z}%
\right) $, it immediately follows that $\varphi $ defines $\vec{f}$ in $%
\mathcal{K}_{R}$. Hence $\varphi $ defines $\vec{f}$ in $\mathcal{K}$.

Next we prove the moreover part. Suppose $\mathcal{K}_{\mathcal{L}}$ has
finitely many isomorphism types of $(n+m)$-generated substructures and each
one is finite. Assume (2) without $\mathbb{P}_{u}$. Let $\mathcal{K}_{R}$ be
as in the first part of the proof. Note that $\left( \mathcal{K}_{R}\right)
_{\mathcal{L}}=\mathcal{K}_{\mathcal{L}}$ and thus we can apply the moreover
part of Theorem \ref{(positive) open para relaciones} to obtain $\varphi \in 
\mathrm{Op}(\mathcal{L})$ (resp. $\left[ \bigvee \bigwedge \mathrm{At}(%
\mathcal{L})\right] $) defining $R$ in $\mathcal{K}_{R}$. Clearly $\varphi $
defines $\vec{f}$ in $\mathcal{K}$. The remaining implication is immediate
by (1)$\Rightarrow $(2).
\end{proof}

\begin{corollary}
\label{open para localy finite}Let $\mathcal{K}$ be any class of $\mathcal{L}
$-algebras contained in a locally finite variety. Suppose $\mathbf{A}%
\rightarrow f^{\mathbf{A}}$ is a map which assigns to each $\mathbf{A}\in 
\mathcal{K}$ an $n$-ary operation $f^{\mathbf{A}}:A^{n}\rightarrow A$. The
following are equivalent:

\begin{enumerate}
\item[(1)] There is a formula in $\mathrm{Op}(\mathcal{L})$ (resp. $\left[
\bigvee \bigwedge \mathrm{At}(\mathcal{L})\right] $) which defines $f$ in $%
\mathcal{K}$.

\item[(2)] For all $\mathbf{A},\mathbf{B}\in \mathcal{K}$, all $\mathbf{A}%
_{0}\leq \mathbf{A}_{\mathcal{L}}$, $\mathbf{B}_{0}\leq \mathbf{B}_{\mathcal{%
L}}$, all isomorphisms (resp. homomorphisms) $\sigma :\mathbf{A}%
_{0}\rightarrow \mathbf{B}_{0}$, and all $a_{1},\ldots ,a_{n}\in A_{0}$ such
that $f^{\mathbf{A}}(\vec{a})\in A_{0}$, we have $\sigma (f^{\mathbf{A}}(%
\vec{a}))=f^{\mathbf{B}}(\sigma (a_{1}),\ldots ,\sigma (a_{n}))$.
\end{enumerate}
\end{corollary}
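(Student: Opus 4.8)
The plan is to derive this exactly as the analogous relational statement (Corollary \ref{(positive) open locally finite para relaciones}) was obtained from its parent theorem, but now invoking the functional Theorem \ref{(positive) open}. First I would adjoin a fresh $n$-ary function symbol $f\notin \mathcal{L}$, put $\mathcal{L}^{\prime}=\mathcal{L}\cup\{f\}$ so that $\mathcal{L}^{\prime}-\mathcal{L}=\{f\}$ (the case $m=1$), and form the class of $\mathcal{L}^{\prime}$-structures
\[
\mathcal{K}_{f}=\{(\mathbf{A},f^{\mathbf{A}}):\mathbf{A}\in\mathcal{K}\}.
\]
Then a formula $\varphi(\vec{x},z)$ defines $f$ in $\mathcal{K}$ precisely when it defines $\vec{f}$ (with $m=1$) in $\mathcal{K}_{f}$, and condition (2) of the present corollary is verbatim condition (2) of Theorem \ref{(positive) open} with $m=1$ and the operator $\mathbb{P}_{u}$ deleted (the isomorphism/$\mathrm{Op}(\mathcal{L})$ and homomorphism/$[\bigvee\bigwedge\mathrm{At}(\mathcal{L})]$ alternatives match clause for clause). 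So once the hypotheses of the moreover part of Theorem \ref{(positive) open} are in force for $\mathcal{K}_{f}$, the whole equivalence follows by translating $\mathcal{K}_{f}$ back to $\mathcal{K}$.

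The single point that needs genuine checking—and the step I expect to be the only real obstacle—is verifying the moreover hypothesis, namely that $(\mathcal{K}_{f})_{\mathcal{L}}$ has finitely many isomorphism types of $(n+1)$-generated substructures and that each is finite. I would first observe that $(\mathcal{K}_{f})_{\mathcal{L}}=\mathcal{K}$, so it suffices to control the $(n+1)$-generated $\mathcal{L}$-substructures of members of $\mathcal{K}$. Since $\mathcal{K}$ lies inside a locally finite variety $\mathcal{V}$, the free algebra $\mathbf{F}_{\mathcal{V}}(n+1)$ is finite; any $(n+1)$-generated subalgebra of a member of $\mathcal{K}$ is an $(n+1)$-generated member of $\mathcal{V}$, hence a homomorphic image of $\mathbf{F}_{\mathcal{V}}(n+1)$ and therefore finite, and as a finite algebra has only finitely many congruences there are only finitely many such images up to isomorphism. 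This is exactly the (implicit) upgrade from ``locally finite'' to ``finitely many isomorphism types'' that underlies the relational corollary as well.

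With that hypothesis secured, the moreover part of Theorem \ref{(positive) open} applied to $\mathcal{K}_{f}$ produces a formula $\varphi$ in $\mathrm{Op}(\mathcal{L})$ (resp.\ $[\bigvee\bigwedge\mathrm{At}(\mathcal{L})]$) defining $\vec{f}$ in $\mathcal{K}_{f}$ from condition (2), and conversely (1) implies (2) by the already-established (1)$\Rightarrow$(2) direction of that theorem. Reinterpreting $\mathcal{K}_{f}$ as $\mathcal{K}$ as above gives the stated equivalence for $f$. I anticipate no difficulty beyond the routine bookkeeping of this translation; the genuinely new content is the elementary observation about the finiteness of $\mathbf{F}_{\mathcal{V}}(n+1)$ and its finitely many quotients.
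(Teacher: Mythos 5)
Your proposal is correct and is essentially the paper's own proof: the authors likewise apply the moreover part of Theorem \ref{(positive) open} to the class $\{(\mathbf{A},f^{\mathbf{A}}):\mathbf{A}\in \mathcal{K}\}$, and you have merely made explicit the (routine) verification that local finiteness yields finitely many isomorphism types of finite $(n+1)$-generated substructures.
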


\begin{proof}
Apply the moreover part of Theorem \ref{(positive) open} to the class $\{(%
\mathbf{A},f^{\mathbf{A}}):\mathbf{A}\in \mathcal{K}\}$.
\end{proof}

Let $\mathbf{3}=(\{0,1/2,1\},\max ,\min ,^{\ast },0,1)$, where $0^{\ast }=1$
and $(1/2)^{\ast }=1^{\ast }=0$. Of course, $\mathbf{3}$ is the
three-element Stone algebra (see \cite{ba-dw}). Note that the only non
trivial homomorphism between subalgebras of $\mathbf{3}$ is the map $^{\ast
\ast }:\{0,1/2,1\}\rightarrow \{0,1\}$. Thus the above corollary applied to
the class $\mathcal{K}=\{\mathbf{3}\}$ says that a function $%
f:\{0,1/2,1\}^{n}\rightarrow \{0,1/2,1\}$ is definable in $\mathbf{3}$ by a
positive open formula in the language of $\mathbf{3}$ iff $f(x_{1},\ldots
,x_{n})^{\ast \ast }=f(x_{1}^{\ast \ast },\ldots ,x_{n}^{\ast \ast })$, for
any $x_{1},\ldots ,x_{n}\in \{0,1/2,1\}$.

\subsection*{Applications to definable principal congruences}

We apply the above results to give natural proofs of two results on
definability of relative principal congruences in quasivarieties.

\begin{proposition}
\label{aplicacion a dpc}Let $\mathcal{Q}$ be a quasivariety with definable
relative principal congruences and let $\mathcal{L}$ be the language of $%
\mathcal{Q}$. The following are equivalent:

\begin{enumerate}
\item[(1)] There is a formula in $\left[ \forall \mathrm{Op}(\mathcal{L})%
\right] $ defining relative principal congruences in $\mathcal{Q}$.

\item[(2)] There is a formula in $\left[ \bigvee \bigwedge \mathrm{At}(%
\mathcal{L})\right] $ defining relative principal congruences in $\mathcal{Q}
$.

\item[(3)] $\mathcal{Q}$ has the relative congruence extension property.
\end{enumerate}
\end{proposition}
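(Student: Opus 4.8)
The plan is to recast the statement in the setting of Theorem \ref{(positive) open para relaciones}. Let $R$ be a fresh $4$-ary relation symbol, put $\mathcal{L}^{\prime }=\mathcal{L}\cup \{R\}$, and for $\mathbf{A}\in \mathcal{Q}$ let $R^{\mathbf{A}}=\{(a,b,c,d):(c,d)\in \theta _{\mathcal{Q}}^{\mathbf{A}}(a,b)\}$; set $\mathcal{Q}_{R}=\{(\mathbf{A},R^{\mathbf{A}}):\mathbf{A}\in \mathcal{Q}\}$. Defining relative principal congruences by a formula of a prescribed syntactic shape is then precisely defining $R$ in $\mathcal{Q}_{R}$ by such a formula. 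Two standing facts will be used repeatedly: relative principal congruences are monotone under subalgebras ($\mathbf{C}\leq \mathbf{D}$ gives $\theta _{\mathcal{Q}}^{\mathbf{C}}(a,b)\subseteq \theta _{\mathcal{Q}}^{\mathbf{D}}(a,b)$), and they are preserved by homomorphisms (if $\sigma :\mathbf{C}\rightarrow \mathbf{D}$ is a homomorphism of members of $\mathcal{Q}$ then $(c,d)\in \theta _{\mathcal{Q}}^{\mathbf{C}}(a,b)$ implies $(\sigma c,\sigma d)\in \theta _{\mathcal{Q}}^{\mathbf{D}}(\sigma a,\sigma b)$, since the $\sigma $-preimage of a relative congruence is again a relative congruence). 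Moreover, because $\mathcal{Q}$ has definable relative principal congruences, $R$ is first order definable uniformly over $\mathcal{Q}$, so by \L o\'{s}'s theorem $\mathbb{P}_{u}(\mathcal{Q}_{R})=\mathcal{Q}_{R}$; this is what lets us apply Theorem \ref{(positive) open para relaciones} with the operator $\mathbb{P}_{u}$ harmlessly present.

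The backbone of the argument is a single intermediate property, the restriction property (RP): for all $\mathbf{A}\leq \mathbf{B}\in \mathcal{Q}$ and all $a,b,c,d\in A$, $(c,d)\in \theta _{\mathcal{Q}}^{\mathbf{B}}(a,b)$ implies $(c,d)\in \theta _{\mathcal{Q}}^{\mathbf{A}}(a,b)$ (the reverse inclusion is automatic). I would show that each of (1), (2), (3) implies RP, and that RP implies (2). First, (1)$\Rightarrow $RP: a formula of $[\forall \mathrm{Op}(\mathcal{L})]$ is universal, hence preserved downward under substructures, and substructures of members of $\mathcal{Q}$ lie in $\mathcal{Q}$; evaluating the defining formula first in $\mathbf{B}$ and then in $\mathbf{A}$ yields RP. Likewise (2)$\Rightarrow $RP, now because a formula of $[\bigvee \bigwedge \mathrm{At}(\mathcal{L})]$ is quantifier free and therefore absolute between a structure and its substructures; and (3)$\Rightarrow $RP is immediate, since an extension of $\theta _{\mathcal{Q}}^{\mathbf{A}}(a,b)$ to $\mathbf{B}$ forces $\theta _{\mathcal{Q}}^{\mathbf{B}}(a,b)\cap A^{2}\subseteq \theta _{\mathcal{Q}}^{\mathbf{A}}(a,b)$. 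For RP$\Rightarrow $(2) I would verify the homomorphism form of condition (2) of Theorem \ref{(positive) open para relaciones} for $\mathcal{Q}_{R}$: given $\sigma :\mathbf{A}_{0}\rightarrow \mathbf{B}_{0}$ with $\mathbf{A}_{0}\leq \mathbf{A}$, $\mathbf{B}_{0}\leq \mathbf{B}$ and $(a,b,c,d)\in R^{\mathbf{A}}$ with all entries in $A_{0}$, RP pushes the membership down into $\mathbf{A}_{0}$, the homomorphism fact moves it across $\sigma $ into $\mathbf{B}_{0}$, and monotonicity lifts it back up into $\mathbf{B}$; the theorem then delivers a defining formula in $[\bigvee \bigwedge \mathrm{At}(\mathcal{L})]$. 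Finally (2)$\Rightarrow $(1) is the trivial inclusion $[\bigvee \bigwedge \mathrm{At}(\mathcal{L})]\subseteq \mathrm{Op}(\mathcal{L})\subseteq [\forall \mathrm{Op}(\mathcal{L})]$. Together these links make (1), (2), (3) equivalent once one more implication is available.

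With these in place the proposition reduces to the single remaining implication RP$\Rightarrow $(3), and this is where I expect the real work to be. Note that once RP holds we already have (2), so in proving RP$\Rightarrow $(3) we may assume that membership in a relative principal congruence is described by an explicit disjunction of conjunctions of equations. The plan is to establish RCEP in its equivalent form $\Theta _{\mathcal{Q}}^{\mathbf{B}}(\theta )\cap A^{2}=\theta $ for every $\theta \in \mathrm{Con}_{\mathcal{Q}}(\mathbf{A})$ and every $\mathbf{A}\leq \mathbf{B}\in \mathcal{Q}$. Since $\mathrm{Con}_{\mathcal{Q}}(\mathbf{B})$ is algebraic and $\Theta _{\mathcal{Q}}^{\mathbf{B}}(\theta )$ is the directed join of the relative congruences generated by the finite subsets of $\theta $, this reduces to showing $\theta _{\mathcal{Q}}^{\mathbf{B}}(F)\cap A^{2}\subseteq \Theta _{\mathcal{Q}}^{\mathbf{A}}(F)$ for finite $F\subseteq A^{2}$, that is, to transferring RP from single generating pairs to finite joins of relative principal congruences. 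The main obstacle is exactly this transfer: in a quasivariety the relative congruence generated by several pairs is not obtained from the individual relative principal congruences by the naive lattice join, so RP for one pair does not formally give RP for finitely many. I would attack it by induction on $|F|$, using the equational normal form supplied by (2) to control the restriction to $A^{2}$ of the join of two relative principal congruences, in the spirit of the classical passage from equationally definable principal congruences to the congruence extension property; the hypothesis of definable relative principal congruences is what keeps this normal form available and uniform across $\mathcal{Q}$.
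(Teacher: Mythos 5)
Your overall architecture is sound and, up to one point, coincides with the paper's own proof: the chain you describe for RP$\Rightarrow$(2) (push membership down into $\mathbf{A}_{0}$ via the restriction property, across $\sigma $ via preservation of relative principal congruences under homomorphisms, back up into $\mathbf{B}$ by monotonicity, then invoke the homomorphism version of Theorem \ref{(positive) open para relaciones}, using definability of relative principal congruences to guarantee closure under $\mathbb{P}_{u}$) is exactly the paper's proof of (3)$\Rightarrow $(2), and your (1)$\Rightarrow $RP is the first half of the paper's (1)$\Rightarrow $(3).

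The genuine gap is the implication RP$\Rightarrow $(3), i.e.\ that the \emph{principal} relative congruence extension property $\theta _{\mathcal{Q}}^{\mathbf{A}}(a,b)=\theta _{\mathcal{Q}}^{\mathbf{B}}(a,b)\cap A^{2}$ already yields the full relative congruence extension property. You correctly isolate this as the hard step, but you do not prove it: the induction on $\left\vert F\right\vert $ is only announced, and you yourself flag the obstruction, namely that in a quasivariety $\Theta _{\mathcal{Q}}^{\mathbf{B}}(F\cup \{(a,b)\})$ is not recovered from $\Theta _{\mathcal{Q}}^{\mathbf{B}}(F)$ and $\theta _{\mathcal{Q}}^{\mathbf{B}}(a,b)$ by any naive join, so RP for single pairs does not formally propagate to finite generating sets. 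Moreover the ``equational normal form supplied by (2)'' describes membership in a \emph{principal} relative congruence only; it gives no uniform first-order description of $\Theta _{\mathcal{Q}}^{\mathbf{B}}(F)$ for $\left\vert F\right\vert \geq 2$, so it is not clear how it would control the inductive step. The paper closes exactly this gap by citing a theorem of Blok and Pigozzi \cite{bl-pi}: a quasivariety has the relative congruence extension property provided $\theta _{\mathcal{Q}}^{\mathbf{A}}(a,b)=\theta _{\mathcal{Q}}^{\mathbf{B}}(a,b)\cap A^{2}$ for all $\mathbf{A}\leq \mathbf{B}\in \mathcal{Q}$ and all $a,b\in A$. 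Your instinct about where the difficulty lies is right, but as written the argument is incomplete at precisely that point; you should either carry out the Blok--Pigozzi argument in full or cite it.
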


\begin{proof}
(1)$\Rightarrow $(3). We need the following fact proved in \cite{bl-pi}.

\begin{itemize}
\item A quasivariety $\mathcal{Q}$ has the relative congruence extension
property if for every $\mathbf{A},\mathbf{B}\in \mathcal{Q}$ with $\mathbf{A}%
\leq \mathbf{B}$ and for all $a,b\in A$ we have that $\theta _{\mathcal{Q}}^{%
\mathbf{A}}\left( a,b\right) =\theta _{\mathcal{Q}}^{\mathbf{B}}\left(
a,b\right) \cap A^{2}$.
\end{itemize}

\noindent Note that it is always the case that $\theta _{\mathcal{Q}}^{%
\mathbf{A}}\left( a,b\right) \subseteq \theta _{\mathcal{Q}}^{\mathbf{B}%
}\left( a,b\right) \cap A^{2}$. So, as formulas in $\left[ \forall \mathrm{Op%
}(\mathcal{L})\right] $ are preserved by subalgebras, (1) implies that $%
\theta _{\mathcal{Q}}^{\mathbf{A}}\left( a,b\right) =\theta _{\mathcal{Q}}^{%
\mathbf{B}}\left( a,b\right) \cap A^{2}$ for every $\mathbf{A}\leq \mathbf{B}%
\in \mathcal{Q}$. Thus the fact cited above yields (3).

(3)$\Rightarrow $(2). We use the following well known fact.

\begin{enumerate}
\item[(i)] For all $\mathbf{A},\mathbf{B}\in \mathcal{Q}$ and all
homomorphisms $\sigma :\mathbf{A}\rightarrow \mathbf{B}$ we have that $%
(a,b)\in \theta _{\mathcal{Q}}^{\mathbf{A}}\left( c,d\right) $ implies $%
(\sigma (a),\sigma (b))\in \theta _{\mathcal{Q}}^{\mathbf{B}}\left( \sigma
\left( c\right) ,\sigma \left( d\right) \right) $.
\end{enumerate}

\noindent For each $\mathbf{A}\in \mathcal{Q}$, let%
\begin{equation*}
R^{\mathbf{A}}=\{(a,b,c,d):(a,b)\in \theta _{\mathcal{Q}}^{\mathbf{A}}\left(
c,d\right) \}\text{,}
\end{equation*}%
and define%
\begin{equation*}
\mathcal{K}=\{\left( \mathbf{A},R^{\mathbf{A}}\right) :\mathbf{A}\in 
\mathcal{Q}\}\text{.}
\end{equation*}%
Since $\mathcal{Q}$ has definable relative principal congruences, $\mathcal{K%
}$ is a first order class and hence $\mathbb{P}_{u}(\mathcal{K})\subseteq 
\mathcal{K}$. Thus, in order to prove that (2) of Theorem \ref{(positive)
open para relaciones} holds we need to check that:

\begin{enumerate}
\item[(ii)] For all $\mathbf{A},\mathbf{B}\in \mathcal{K}$, all $\mathbf{A}%
_{0}\leq \mathbf{A}_{\mathcal{L}}$, $\mathbf{B}_{0}\leq \mathbf{B}_{\mathcal{%
L}}$, all homomorphisms $\sigma :\mathbf{A}_{0}\rightarrow \mathbf{B}_{0}$
and all $a,b,c,d\in A_{0}$, we have that $(a,b,c,d)\in R^{\mathbf{A}}$
implies $(\sigma (a),\sigma (b),\sigma (c),\sigma (d))\in R^{\mathbf{B}}$.
\end{enumerate}

\noindent Or, equivalently:

\begin{enumerate}
\item[(iii)] For all $\mathbf{A},\mathbf{B}\in \mathcal{Q}$, all $\mathbf{A}%
_{0}\leq \mathbf{A}_{\mathcal{L}}$, $\mathbf{B}_{0}\leq \mathbf{B}_{\mathcal{%
L}}$, all homomorphisms $\sigma :\mathbf{A}_{0}\rightarrow \mathbf{B}_{0}$
and all $a,b,c,d\in A_{0}$, we have that $(a,b)\in \theta _{\mathcal{Q}}^{%
\mathbf{A}}\left( c,d\right) $ implies $(\sigma (a),\sigma (b))\in \theta _{%
\mathcal{Q}}^{\mathbf{B}}(\sigma (c),\sigma (d))$.
\end{enumerate}

\noindent Since $\mathcal{V}$ has the congruence extension property, we can
replace in (iii) the occurrence of "$(a,b)\in \theta _{\mathcal{Q}}^{\mathbf{%
A}}\left( c,d\right) $" by "$(a,b)\in \theta _{\mathcal{Q}}^{\mathbf{A}%
_{0}}(c,d)$". Hence (iii) follows from (i).

(2)$\Rightarrow $(1). This is trivial.
\end{proof}

\begin{corollary}
\label{aplicacion a dpc 1}Let $\mathcal{Q}$ be a locally finite quasivariety
with the relative congruence extension property. Let $\mathcal{L}$ be the
language of $\mathcal{Q}$. Then there is a formula in $\left[ \bigvee
\bigwedge \mathrm{At}(\mathcal{L})\right] $ which defines the relative
principal congruences in $\mathcal{Q}$.
\end{corollary}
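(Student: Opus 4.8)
The plan is to apply the homomorphism version of Corollary \ref{(positive) open locally finite para relaciones} to the class $\mathcal{Q}$ equipped with the $4$-ary relation encoding relative principal congruences. Concretely, for each $\mathbf{A}\in\mathcal{Q}$ I set
\[
R^{\mathbf{A}}=\{(a,b,c,d):(a,b)\in\theta_{\mathcal{Q}}^{\mathbf{A}}(c,d)\},
\]
exactly as in the proof of Proposition \ref{aplicacion a dpc}. A formula in $\left[\bigvee\bigwedge\mathrm{At}(\mathcal{L})\right]$ defining $R$ in $\mathcal{Q}$ is precisely a formula of the required shape defining the relative principal congruences, so it suffices to produce one.

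First I would verify the standing hypothesis of Corollary \ref{(positive) open locally finite para relaciones}, that $\mathcal{Q}$ is a class of $\mathcal{L}$-algebras contained in a locally finite variety. The natural candidate is $\mathbb{V}(\mathcal{Q})$. Since $\mathcal{Q}$ and $\mathbb{V}(\mathcal{Q})$ satisfy the same identities, their $k$-generated (relatively) free algebras coincide; local finiteness of $\mathcal{Q}$ makes these free algebras finite, and every $k$-generated member of $\mathbb{V}(\mathcal{Q})$ is a homomorphic image of the appropriate free algebra, hence finite. Thus $\mathbb{V}(\mathcal{Q})$ is locally finite and contains $\mathcal{Q}$.

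The core of the argument is to check condition (2) of Corollary \ref{(positive) open locally finite para relaciones} in its homomorphism form: for all $\mathbf{A},\mathbf{B}\in\mathcal{Q}$, all subalgebras $\mathbf{A}_0\le\mathbf{A}$, $\mathbf{B}_0\le\mathbf{B}$, every homomorphism $\sigma:\mathbf{A}_0\to\mathbf{B}_0$, and all $a,b,c,d\in A_0$, that $(a,b)\in\theta_{\mathcal{Q}}^{\mathbf{A}}(c,d)$ implies $(\sigma(a),\sigma(b))\in\theta_{\mathcal{Q}}^{\mathbf{B}}(\sigma(c),\sigma(d))$. This is exactly statement (iii) in the proof of Proposition \ref{aplicacion a dpc}, and it is obtained there from the relative congruence extension property: RCEP gives $\theta_{\mathcal{Q}}^{\mathbf{A}_0}(c,d)=\theta_{\mathcal{Q}}^{\mathbf{A}}(c,d)\cap A_0^2$, so the pair $(a,b)$ already lies in $\theta_{\mathcal{Q}}^{\mathbf{A}_0}(c,d)$; applying fact (i) (monotonicity of relative principal congruences under homomorphisms) to $\sigma$ yields $(\sigma(a),\sigma(b))\in\theta_{\mathcal{Q}}^{\mathbf{B}_0}(\sigma(c),\sigma(d))$, and the inclusion $\mathbf{B}_0\le\mathbf{B}$ together with fact (i) again gives membership in $\theta_{\mathcal{Q}}^{\mathbf{B}}(\sigma(c),\sigma(d))$.

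With condition (2) in hand, Corollary \ref{(positive) open locally finite para relaciones} immediately produces a formula in $\left[\bigvee\bigwedge\mathrm{At}(\mathcal{L})\right]$ defining $R$ in $\mathcal{Q}$, which completes the proof. The step requiring genuine care, and the main obstacle, is the RCEP application: one must invoke it in the form $\theta_{\mathcal{Q}}^{\mathbf{A}_0}(c,d)=\theta_{\mathcal{Q}}^{\mathbf{A}}(c,d)\cap A_0^2$ so that $(a,b)$ can be transferred to the relative principal congruence computed inside the small algebra $\mathbf{A}_0$, where $\sigma$ is actually defined. Everything else is a direct appeal to the already-established machinery; in particular, no separate hypothesis of definable relative principal congruences (as in Proposition \ref{aplicacion a dpc}) is needed here, since local finiteness supplies it through the ``moreover'' content already built into Corollary \ref{(positive) open locally finite para relaciones}.
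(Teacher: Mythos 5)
Your proposal is correct and follows essentially the same route as the paper: the paper's proof of Corollary \ref{aplicacion a dpc 1} is precisely to repeat the argument of (3)$\Rightarrow$(2) in Proposition \ref{aplicacion a dpc} (the RCEP reduction to $\theta_{\mathcal{Q}}^{\mathbf{A}_0}(c,d)$ followed by monotonicity under homomorphisms), but invoking Corollary \ref{(positive) open locally finite para relaciones} instead of Theorem \ref{(positive) open para relaciones}, which is exactly what you do. Your added verification that $\mathbb{V}(\mathcal{Q})$ is locally finite, and your observation that definability of relative principal congruences is no longer needed as a hypothesis, are both correct and merely make explicit what the paper leaves implicit.
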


\begin{proof}
The proof is similar to (3)$\Rightarrow $(2) in the above proof, but
applying Corollary \ref{(positive) open locally finite para relaciones} in
place of Theorem \ref{(positive) open para relaciones}.
\end{proof}

The above corollary is proved in \cite{ba-be} for the case in which $%
\mathcal{Q}$ is a finitely generated variety.

\section{Definability by open Horn formulas}

\begin{theorem}
\label{open horn para relaciones}Let $\mathcal{L}\subseteq \mathcal{L}%
^{\prime }$ be first order languages and let $R\in \mathcal{L}^{\prime }-%
\mathcal{L}$ be an $n$-ary relation symbol. Let $\mathcal{K}$ be any class
of $\mathcal{L}^{\prime }$-structures. The following are equivalent:

\begin{enumerate}
\item[(1)] There is a formula in $\mathrm{OpHorn}(\mathcal{L})$ which
defines $R$ in $\mathcal{K}$.

\item[(2)] For all $\mathbf{A},\mathbf{B}\in \mathbb{P}_{u}\mathbb{P}_{%
\mathrm{fin}}(\mathcal{K})$, all $\mathbf{A}_{0}\leq \mathbf{A}_{\mathcal{L}%
} $, $\mathbf{B}_{0}\leq \mathbf{B}_{\mathcal{L}}$, all isomorphisms $\sigma
:\mathbf{A}_{0}\rightarrow \mathbf{B}_{0}$, and all $a_{1},\ldots ,a_{n}\in
A_{0}$, we have that $(a_{1},\ldots ,a_{n})\in R^{\mathbf{A}}$ implies $%
(\sigma (a_{1}),\ldots ,\sigma (a_{n}))\in R^{\mathbf{B}}$.
\end{enumerate}

\noindent Moreover, if $\mathcal{K}_{\mathcal{L}}$ has finitely many
isomorphism types of $n$-generated substructures and each one is finite,
then we can remove the operator $\mathbb{P}_{u}$ from (2).
\end{theorem}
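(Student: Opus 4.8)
The strategy is to follow the proof of Theorem~\ref{(positive) open para relaciones}, the single new model-theoretic ingredient being that open Horn formulas are preserved under direct products: if $\mathbf{C}=\mathbf{C}_{1}\times\cdots\times\mathbf{C}_{k}$, $\psi$ is Horn, and each $\mathbf{C}_{i}\vDash\psi(\vec{c}^{\,i})$, then $\mathbf{C}\vDash\psi(\vec{c})$ for the tuple $\vec{c}$ projecting to the $\vec{c}^{\,i}$. Together with the identities $R^{\mathbf{S}_{1}\times\cdots\times\mathbf{S}_{k}}=R^{\mathbf{S}_{1}}\times\cdots\times R^{\mathbf{S}_{k}}$ of Section~1 and \L o\'s's Theorem, this is what singles out $\mathbb{P}_{\mathrm{fin}}$ (rather than plain $\mathbb{P}_{u}$) as the correct operator. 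The implication (1)$\Rightarrow$(2) then runs as in Theorem~\ref{(positive) open para relaciones}: from $\vec{a}\in R^{\mathbf{A}}$ one gets $\mathbf{A}\vDash\varphi(\vec{a})$; since $\varphi$ is quantifier free it descends to $\mathbf{A}_{0}$, is transported across $\sigma$ to $\mathbf{B}_{0}$, rises to $\mathbf{B}$, and delivers $\sigma(\vec{a})\in R^{\mathbf{B}}$.

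For (2)$\Rightarrow$(1) I must produce, instead of a disjunction of conjunctions of literals, a \emph{conjunction of open Horn clauses}. Call an open Horn clause $\bigwedge\Gamma(\vec{x})\to\gamma(\vec{x})$ (with $\Gamma$ a finite set of atomic $\mathcal{L}$-formulas and $\gamma$ atomic or absent) \emph{admissible} if $\mathcal{K}\vDash R(\vec{x})\to(\bigwedge\Gamma\to\gamma)$. Because $R\to(\bigwedge\Gamma\to\gamma)$ is itself an $\mathcal{L}'$-Horn formula, admissibility over $\mathcal{K}$ coincides with admissibility over $\mathbb{P}_{u}\mathbb{P}_{\mathrm{fin}}(\mathcal{K})$. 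Let $\varphi$ be the conjunction of all admissible clauses; the inclusion $R\to\varphi$ is immediate, and once the reverse inclusion is known a compactness argument (exactly as in Theorem~\ref{(positive) open para relaciones}, applied to the implication from the admissible clauses to $R$) reduces $\varphi$ to a finite, hence genuine, open Horn formula.

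The reverse inclusion $\varphi\to R$ is the heart of the matter and the step I expect to be the main obstacle. Fix $\mathbf{B}\in\mathbb{P}_{u}\mathbb{P}_{\mathrm{fin}}(\mathcal{K})$ and $\vec{b}$ with $\mathbf{B}\vDash\varphi(\vec{b})$; I must show $\vec{b}\in R^{\mathbf{B}}$. Let $\Delta^{+}$ be the atomic $\mathcal{L}$-diagram of $\vec{b}$. For every atomic $\gamma$ with $\mathbf{B}\nvDash\gamma(\vec{b})$ the clause $\bigwedge\Delta^{+}\to\gamma$ is not admissible (otherwise $\varphi$ would force $\gamma$ at $\vec{b}$), so some $\mathbf{A}_{\gamma}\in\mathcal{K}$ carries a tuple $\vec{a}_{\gamma}\in R^{\mathbf{A}_{\gamma}}$ satisfying $\Delta^{+}$ but not $\gamma$. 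To realize all of these---possibly infinitely many---negative facts at once, I would form the finite products $\mathbf{A}_{F}=\prod_{\gamma\in F}\mathbf{A}_{\gamma}\in\mathbb{P}_{\mathrm{fin}}(\mathcal{K})$ over finite sets $F$ and then take an ultraproduct of the family $(\mathbf{A}_{F})_{F}$ along an ultrafilter for which each set $\{F:\gamma\in F\}$ is large. By $R^{\prod}=\prod R$ and \L o\'s's Theorem this produces $\mathbf{A}\in\mathbb{P}_{u}\mathbb{P}_{\mathrm{fin}}(\mathcal{K})$ together with a tuple $\vec{a}\in R^{\mathbf{A}}$ whose atomic $\mathcal{L}$-diagram coincides with that of $\vec{b}$ (the positive facts because every factor satisfies $\Delta^{+}$, each negation $\neg\gamma$ because the factor $\mathbf{A}_{\gamma}$ kills $\gamma$). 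Hence $a_{i}\mapsto b_{i}$ extends to an isomorphism between the $\mathcal{L}$-substructures generated by $\vec{a}$ and by $\vec{b}$, and condition (2) yields $\vec{b}\in R^{\mathbf{B}}$. This product-then-ultraproduct assembly is exactly what forces the operator $\mathbb{P}_{u}\mathbb{P}_{\mathrm{fin}}$; the delicate bookkeeping is to check that $\vec{a}$ stays in $R$ throughout and that the two atomic diagrams match on the nose.

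Finally, the moreover part follows the moreover part of Theorem~\ref{(positive) open para relaciones}. If $\mathcal{K}_{\mathcal{L}}$ has only finitely many isomorphism types of $n$-generated substructures, each finite, then there are only finitely many atomic $\mathcal{L}$-formulas in $\vec{x}$ up to $\mathcal{K}$-equivalence, hence only finitely many admissible clauses and finitely many relevant $\gamma$; the conjunction $\varphi$ is then automatically finite, no compactness is needed, and the witnesses can be combined by a single finite product, so the whole argument goes through over $\mathbb{P}_{\mathrm{fin}}(\mathcal{K})$ with $\mathbb{P}_{u}$ removed.
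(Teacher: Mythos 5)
Your proof of (2)$\Rightarrow$(1) takes a genuinely different route from the paper's. The paper first applies Theorem~\ref{(positive) open para relaciones} to the class $\mathbb{P}_{\mathrm{fin}}(\mathcal{K})$ to obtain an open formula defining $R$ there, puts it in conjunctive normal form, and then runs a product-of-counterexamples argument to replace the disjunction of positive atoms in each clause by a single atom; you instead assemble the candidate formula directly as the conjunction of all Horn clauses that are consequences of $R$ over $\mathcal{K}$, prove the converse implication by a diagram argument, and finitize by compactness. Your key structural observation --- that admissibility transfers between $\mathcal{K}$ and $\mathbb{P}_{u}\mathbb{P}_{\mathrm{fin}}(\mathcal{K})$ because $R(\vec{x})\wedge\bigwedge\Gamma(\vec{x})\rightarrow\gamma(\vec{x})$ is a universal Horn $\mathcal{L}'$-sentence (via $R^{\mathbf{S}_{1}\times\cdots\times\mathbf{S}_{k}}=R^{\mathbf{S}_{1}}\times\cdots\times R^{\mathbf{S}_{k}}$) --- is correct and is exactly what lets the witnesses be multiplied together while their distinguished tuples stay inside $R$. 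Your (1)$\Rightarrow$(2) coincides with the paper's; note only that product-preservation of Horn formulas gives just the inclusion $R^{\mathbf{C}}\subseteq\varphi^{\mathbf{C}}$ on finite products (which is what is needed at the source $\mathbf{A}$), not the full statement that $\varphi$ defines $R$ there.

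There is, however, a concrete gap in the diagram argument. From ``$\bigwedge\Delta^{+}\rightarrow\gamma$ is not admissible'' you conclude that a single $\mathbf{A}_{\gamma}\in\mathcal{K}$ carries a tuple of $R^{\mathbf{A}_{\gamma}}$ satisfying all of $\Delta^{+}$ and failing $\gamma$. This does not follow: $\Delta^{+}$ is in general infinite, so it is not a clause, and what non-admissibility actually yields is, for each \emph{finite} $\Gamma\subseteq\Delta^{+}$, some $\mathbf{A}_{\Gamma,\gamma}\in\mathcal{K}$ with a tuple of $R^{\mathbf{A}_{\Gamma,\gamma}}$ satisfying $\bigwedge\Gamma\wedge\lnot\gamma$; no member of $\mathcal{K}$ need realize the whole of $\Delta^{+}$. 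The repair is to run your product-then-ultraproduct assembly over the index set of pairs $(\Gamma,F)$, with $\Gamma\subseteq\Delta^{+}$ finite and $F$ a finite set of atomic formulas failing at $\vec{b}$, setting $\mathbf{A}_{(\Gamma,F)}=\prod_{\gamma\in F}\mathbf{A}_{\Gamma,\gamma}\in\mathbb{P}_{\mathrm{fin}}(\mathcal{K})$ and choosing an ultrafilter for which every upward cone is large; performing the two limits in the opposite order would leave you in $\mathbb{P}_{\mathrm{fin}}\mathbb{P}_{u}(\mathcal{K})$ and you would still owe the inclusion $\mathbb{P}_{\mathrm{fin}}\mathbb{P}_{u}(\mathcal{K})\subseteq\mathbb{IP}_{u}\mathbb{P}_{\mathrm{fin}}(\mathcal{K})$. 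Finally, when every atomic formula holds at $\vec{b}$ there is no $\gamma$ to index over, and the witnesses must instead come from the non-admissibility of the conclusion-free clauses $\lnot\bigwedge\Gamma$; this degenerate case is the only place where those clauses of your $\varphi$ are actually used, and your sketch omits it.
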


\begin{proof}
(1)$\Rightarrow $(2). Note that if $\varphi \left( \vec{x}\right) \in 
\mathrm{OpHorn}(\mathcal{L})$ defines $R$ in $\mathcal{K}$, then $\varphi $
defines $R$ in $\mathbb{P}_{u}\mathbb{P}_{\mathrm{fin}}(\mathcal{K})$ as
well. Now we can repeat the argument of (1)$\Rightarrow $(2) in the proof of
Theorem \ref{(positive) open para relaciones}.

(2)$\Rightarrow $(1). Applying Theorem \ref{(positive) open para relaciones}
to the class $\mathbb{P}_{\mathrm{fin}}(\mathcal{K})$ we have that there is
an open $\mathcal{L}$-formula $\varphi $ which defines $R$ in $\mathbb{P}_{%
\mathrm{fin}}(\mathcal{K})$. W.l.o.g. we can suppose that%
\begin{equation*}
\varphi =\bigwedge_{j=1}^{r}\left( \pi _{j}\rightarrow
\bigvee_{i=1}^{k_{j}}\alpha _{i}^{j}\right) \wedge
\bigwedge_{j=1}^{l}\bigvee_{i=1}^{u_{j}}\lnot \beta _{i}^{j}
\end{equation*}%
with $r,k_{j},u_{j}\geq 1$, $l\geq 0$, each $\pi _{j}$ in $\left[ \bigwedge 
\mathrm{At}(\mathcal{L})\right] $ and the formulas $\alpha _{i}^{j},\beta
_{i}^{j}$ in $\mathrm{At}(\mathcal{L})$. Note that for $\mathbf{A}\in 
\mathbb{P}_{\mathrm{fin}}(\mathcal{K})$ and $\vec{a}\in R^{\mathbf{A}}$ we
have that%
\begin{equation*}
\mathbf{A}\vDash \left( \bigwedge_{j=1}^{l}\bigvee_{i=1}^{u_{j}}\lnot \beta
_{i}^{j}\right) (\vec{a})\text{.}
\end{equation*}%
Let%
\begin{equation*}
S=\{(s_{1},\ldots ,s_{r}):1\leq s_{j}\leq k_{j},\text{ \ }j=1,\ldots ,r\}%
\text{.}
\end{equation*}%
We claim that there is $s\in S$ such that $\bigwedge_{j=1}^{r}\left( \pi
_{j}\rightarrow \alpha _{s_{j}}^{j}\right) \wedge
\bigwedge_{j=1}^{l}\bigvee_{i=1}^{u_{j}}\lnot \beta _{i}^{j}$ defines $R$ in 
$\mathcal{K}$. For the sake of contradiction assume that this is not the
case. Then for each $s\in S$ there are $\mathbf{A}_{s}\in \mathcal{K}$, $%
\vec{a}_{s}=(a_{s1},\dots ,a_{sn})\in R^{\mathbf{A}_{s}}$ and $j_{s}$ such
that%
\begin{equation*}
\mathbf{A}_{s}\vDash \lnot \left( \pi _{j_{s}}\rightarrow \alpha
_{s_{j_{s}}}^{j_{s}}\right) (\vec{a}_{s})\text{,}
\end{equation*}%
or equivalently%
\begin{equation}
\mathbf{A}_{s}\vDash \pi _{j_{s}}(\vec{a}_{s})\wedge \lnot \alpha
_{s_{j_{s}}}^{j_{s}}(\vec{a}_{s})\text{.}  \tag{i}  \label{ec prueba horn}
\end{equation}%
Let $p_{1}=(a_{s1})_{s\in S},\dots ,p_{n}=(a_{sn})_{s\in S}$. Note that $%
\vec{p}\in R^{\Pi _{S}\mathbf{A}_{s}}$, and as $\varphi $ defines $R$ in $%
\mathbb{P}_{\mathrm{fin}}(\mathcal{K})$, we have%
\begin{equation*}
\prod_{s\in S}\mathbf{A}_{s}\vDash \varphi (\vec{p})\text{.}
\end{equation*}%
This implies%
\begin{equation*}
\prod_{s\in S}\mathbf{A}_{s}\vDash \bigwedge_{j=1}^{r}\left( \pi
_{j}\rightarrow \bigvee_{i=1}^{k_{j}}\alpha _{i}^{j}\right) (\vec{p})\text{.}
\end{equation*}%
By this and (\ref{ec prueba horn}) we have%
\begin{equation*}
\prod_{s\in S}\mathbf{A}_{s}\vDash \left( \bigwedge_{j=1}^{r}\pi _{j}\right)
(\vec{p})\text{.}
\end{equation*}%
Hence for each $j\in \{1,\ldots ,r\}$ there is an $s_{j}$ such that%
\begin{equation*}
\prod_{s\in S}\mathbf{A}_{s}\vDash \alpha _{s_{j}}^{j}(\vec{p})\text{.}
\end{equation*}%
Let $s=(s_{1},\ldots ,s_{r})$. Then we have that%
\begin{equation*}
\mathbf{A}_{s}\vDash \alpha _{s_{j_{s}}}^{j_{s}}(\vec{a}_{s})\text{,}
\end{equation*}%
which contradicts (\ref{ec prueba horn}).

To prove the moreover part, suppose $\mathcal{K}_{\mathcal{L}}$ has finitely
many isomorphism types of $n$-generated substructures and each one is
finite. Assume (2) without the ultraproduct operator. We prove (1). Note
that our hypothesis on $\mathcal{K}_{\mathcal{L}}$ implies that there are
atomic $\mathcal{L}$-formulas $\alpha _{1}(x_{1},\dots ,x_{n}),\ldots
,\alpha _{k}(x_{1},\dots ,x_{n})$ such that for every atomic $\mathcal{L}$%
-formula $\alpha (\vec{x})$ there is $j$ satisfying $\mathcal{K}\vDash
\alpha (\vec{x})\leftrightarrow \alpha _{j}(\vec{x})$. Since atomic formulas
are preserved by direct products and by direct factors, we have that for
every atomic $\mathcal{L}$-formula $\alpha (\vec{x})$, there is $j$ such
that $\mathbb{P}_{\mathrm{fin}}(\mathcal{K})\vDash \alpha (\vec{x}%
)\leftrightarrow \alpha _{j}(\vec{x})$. This implies that $\mathbb{P}_{%
\mathrm{fin}}(\mathcal{K})_{\mathcal{L}}$ has finitely many isomorphism
types of $n$-generated substructures and each one is finite. By Theorem \ref%
{(positive) open para relaciones} there is an open $\mathcal{L}$-formula $%
\varphi $ which defines $R$ in $\mathbb{P}_{\mathrm{fin}}(\mathcal{K})$. Now
we can proceed as in the first part of this proof.
\end{proof}

By a \emph{trivial }$\mathcal{L}$-structure we mean a structure $\mathbf{A}$
such that $A=\{a\}$ and $(a,\ldots ,a)\in R^{\mathbf{A}}$, for every $R\in 
\mathcal{L}$. Recall that a \emph{strict Horn formula} is a Horn formula
that has exactly one non-negated atomic formula in each of its clauses. Let
us write $\mathrm{OpStHorn}(\mathcal{L})$ for the set of open strict Horn $%
\mathcal{L}$-formulas.

\begin{remark}
Theorem \ref{open horn para relaciones} holds if we replace in (1) $\mathrm{%
OpHorn}(\mathcal{L})$ by $\mathrm{OpStHorn}(\mathcal{L})$ and add the
following requirement to (2):

\begin{description}
\item For all $\mathbf{A}\in \mathbb{P}_{u}(\mathcal{K})_{\mathcal{L}}$ with
a trivial substructure $\{a\}$, we have $(a,\ldots ,a)\in R^{\mathbf{A}}$.
\end{description}
\end{remark}

\begin{proof}
(1)$\Rightarrow $(2). Observe that formulas in $\mathrm{OpStHorn}(\mathcal{L}%
)$ are always satisfied in trivial structures.

To see (2)$\Rightarrow $(1) note that by Theorem \ref{open horn para
relaciones} there is a formula%
\begin{equation*}
\varphi (\vec{x})=\bigwedge_{j=1}^{r}\left( \pi _{j}\rightarrow \alpha
_{j}\right) \wedge \bigwedge_{j=1}^{l}\bigvee_{i=1}^{u_{j}}\lnot \beta
_{i}^{j}
\end{equation*}%
(with $r,u_{j}\geq 1$, $l\geq 0$, each $\pi _{j}$ in $\left[ \bigwedge 
\mathrm{At}(\mathcal{L})\right] $ and the formulas $\alpha _{j},\beta
_{i}^{j}$ in $\mathrm{At}(\mathcal{L})$) which defines $R$ in $\mathcal{K}$.
Note that $\varphi (\vec{x})$ also defines $R$ in $\mathbb{P}_{u}(\mathcal{K}%
)$. Assume $l\geq 1$ and suppose $\mathbf{A}\in \mathbb{P}_{u}(\mathcal{K})_{%
\mathcal{L}}$ has a trivial substructure $\{a\}$. Note that the additional
condition of (2) says that%
\begin{equation*}
\mathbf{A}\vDash \varphi (a,\ldots ,a)\text{,}
\end{equation*}%
which is absurd since $l\geq 1$. Thus we have proved that there is no
trivial substructure in $\mathbb{P}_{u}(\mathcal{K})_{\mathcal{L}}$. Hence%
\begin{equation*}
\mathbb{P}_{u}(\mathcal{K})\vDash \dbigvee\limits_{\alpha (z_{1})\in \mathrm{%
At}(\mathcal{L})}\lnot \alpha (z_{1})\text{,}
\end{equation*}%
which by compactness says that%
\begin{equation*}
\mathcal{K}\vDash \lnot \tilde{\alpha}_{1}(z_{1})\vee \ldots \vee \lnot 
\tilde{\alpha}_{k}(z_{1})
\end{equation*}%
for some atomic $\mathcal{L}$-formulas $\tilde{\alpha}_{1}(z_{1}),\ldots ,%
\tilde{\alpha}_{k}(z_{1})$. Now it is easy to check that the formula%
\begin{equation*}
\bigwedge_{j=1}^{r}\left( \pi _{j}\rightarrow \alpha _{j}\right) \wedge
\bigwedge_{j=1}^{l}\bigwedge_{t=1}^{k}\left( \bigwedge_{i=1}^{u_{j}}\beta
_{i}^{j}\rightarrow \tilde{\alpha}_{t}\right)
\end{equation*}%
defines $R$ in $\mathcal{K}$.
\end{proof}

Here is the functional version of Theorem \ref{open horn para relaciones}.

\begin{theorem}
\label{openHorn}Let $\mathcal{L}\subseteq \mathcal{L}^{\prime }$ be first
order languages and let $f_{1},\ldots ,f_{m}\in \mathcal{L}^{\prime }-%
\mathcal{L}$ be $n$-ary function symbols. For a class $\mathcal{K}$ of $%
\mathcal{L}^{\prime }$-structures, the following are equivalent:

\begin{enumerate}
\item[(1)] There is a formula in $\mathrm{OpHorn}(\mathcal{L})$ (resp. $%
\mathrm{OpStHorn}(\mathcal{L})$) which defines $\vec{f}$ in $\mathcal{K}$.

\item[(2)] For all $\mathbf{A},\mathbf{B}\in \mathbb{P}_{u}\mathbb{P}_{%
\mathrm{fin}}(\mathcal{K})$, all $\mathbf{A}_{0}\leq \mathbf{A}_{\mathcal{L}%
} $, $\mathbf{B}_{0}\leq \mathbf{B}_{\mathcal{L}}$, all isomorphisms $\sigma
:\mathbf{A}_{0}\rightarrow \mathbf{B}_{0}$, and all $a_{1},\ldots ,a_{n}\in
A_{0}$ such that $f_{1}^{\mathbf{A}}(\vec{a}),\ldots ,f_{m}^{\mathbf{A}}(%
\vec{a})\in A_{0}$, we have that $\sigma (f_{i}^{\mathbf{A}}(\vec{a}%
))=f_{i}^{\mathbf{B}}(\sigma (a_{1}),\ldots ,\sigma (a_{n}))$ for all $i\in
\{1,\ldots ,m\}$. (For all $\mathbf{A}\in \mathcal{K}_{\mathcal{L}}$, and
every trivial substructure $\{a\}$, we have $f_{i}^{\mathbf{A}}(a,\ldots
,a)=a$ for all $i\in \{1,\ldots ,m\}$.)
\end{enumerate}

\noindent Moreover, if $\mathcal{K}_{\mathcal{L}}$ has finitely many
isomorphism types of $(n+m)$-generated substructures and each one is finite,
then we can remove the operator $\mathbb{P}_{u}$ in (2).
\end{theorem}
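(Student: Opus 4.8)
The plan is to mirror the reduction that derived Theorem \ref{(positive) open} from Theorem \ref{(positive) open para relaciones}, now using Theorem \ref{open horn para relaciones} and its strict Horn Remark as the relational input. Concretely, I would encode the graph of $\vec f$ as an auxiliary $(n+m)$-ary relation, transport open Horn (resp. open strict Horn) definability of that relation via Theorem \ref{open horn para relaciones}, and read the result back as a definition of $\vec f$.

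For (1)$\Rightarrow$(2) I would argue as in (1)$\Rightarrow$(2) of Theorem \ref{(positive) open para relaciones}. If $\varphi\in\mathrm{OpHorn}(\mathcal{L})$ defines $\vec f$ in $\mathcal{K}$, it defines $\vec f$ in $\mathbb{P}_u\mathbb{P}_{\mathrm{fin}}(\mathcal{K})$ as well; since $\varphi$ is quantifier free its truth is absolute between $\mathbf{A}_{\mathcal{L}}$ and the substructure $\mathbf{A}_0$ for tuples from $A_0$, the isomorphism $\sigma$ carries satisfaction from $\mathbf{A}_0$ to $\mathbf{B}_0$, and it then transfers up to $\mathbf{B}$. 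Evaluating at $(\vec a,\vec f^{\mathbf{A}}(\vec a))$ yields the desired equalities $\sigma(f_i^{\mathbf{A}}(\vec a))=f_i^{\mathbf{B}}(\sigma(a_1),\ldots,\sigma(a_n))$. For the parenthetical strict Horn clause I would invoke the observation recorded in the Remark, namely that formulas in $\mathrm{OpStHorn}(\mathcal{L})$ hold in every trivial structure; plugging a trivial element $a$ into $\varphi$ forces $f_i^{\mathbf{A}}(a,\ldots,a)=a$.

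For (2)$\Rightarrow$(1) I would introduce a fresh $(n+m)$-ary relation symbol $R$, put $R^{\mathbf{A}}=\{(a_1,\ldots,a_n,f_1^{\mathbf{A}}(\vec a),\ldots,f_m^{\mathbf{A}}(\vec a)):a_1,\ldots,a_n\in A\}$, and form $\mathcal{K}_R=\{(\mathbf{A},R^{\mathbf{A}}):\mathbf{A}\in\mathcal{K}\}$. The graph construction commutes with $\mathbb{P}_{\mathrm{fin}}$ and $\mathbb{P}_u$ (operations act componentwise, and ``$R(\vec x,\vec z)$'' is first order equivalent to ``$\vec f(\vec x)=\vec z$''), so condition (2) of Theorem \ref{open horn para relaciones} for $\mathcal{L}\subseteq\mathcal{L}'\cup\{R\}$, $R$ and $\mathcal{K}_R$ unwinds exactly to condition (2) of the present theorem: the membership $(a_1,\ldots,a_n,c_1,\ldots,c_m)\in R^{\mathbf{A}}$ with all entries in $A_0$ says $c_i=f_i^{\mathbf{A}}(\vec a)$, and its preservation under $\sigma$ is precisely the stated equality. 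In the strict Horn case I would further check the extra clause of the Remark: for the graph relation the diagonal $(a,\ldots,a)$ belongs to $R^{\mathbf{A}}$ iff $f_i^{\mathbf{A}}(a,\ldots,a)=a$ for every $i$, so the Remark's trivial substructure requirement is exactly the parenthetical condition of (2). Theorem \ref{open horn para relaciones} (resp. its Remark) then produces $\varphi(\vec x,\vec z)$ defining $R$ in $\mathcal{K}_R$; as $\mathcal{K}_R\vDash \vec f(\vec x)=\vec z\leftrightarrow R(\vec x,\vec z)$, this same $\varphi$ defines $\vec f$ in $\mathcal{K}_R$ and hence in $\mathcal{K}$. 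The moreover part follows immediately since $(\mathcal{K}_R)_{\mathcal{L}}=\mathcal{K}_{\mathcal{L}}$, by applying the moreover part of Theorem \ref{open horn para relaciones}.

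The step I expect to require the most care is not conceptual but the bookkeeping around the strict Horn/trivial clause. The Remark phrases its trivial condition over $\mathbb{P}_u(\mathcal{K})_{\mathcal{L}}$ whereas the present theorem phrases it over $\mathcal{K}_{\mathcal{L}}$; I would reconcile this by noting that ``for every trivial $z$, $\bigwedge_i f_i(z,\ldots,z)=z$'' is a first order sentence, hence preserved under ultraproducts and under finite products (a trivial element of a product reduct is a tuple of trivial elements of the factors, since the $\mathcal{L}$-operations act componentwise). This preservation is what legitimates transferring the diagonal translation of the trivial clause across the operators $\mathbb{P}_{\mathrm{fin}}$ and $\mathbb{P}_u$.
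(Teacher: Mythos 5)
Your proposal coincides with the paper's proof, which is exactly the one-line instruction to apply Theorem \ref{open horn para relaciones} to the graph relation $R$ and the class $\mathcal{K}_{R}$ in the same way that Theorem \ref{(positive) open para relaciones} was used to derive Theorem \ref{(positive) open}; you simply carry out that reduction explicitly. You in fact supply more detail than the paper (which does not even discuss the strict Horn parenthetical), and your identification of the delicate point --- reconciling the trivial-substructure clause stated over $\mathcal{K}_{\mathcal{L}}$ with the Remark's requirement over $\mathbb{P}_{u}(\mathcal{K})_{\mathcal{L}}$ --- is the right one, your transfer argument being unconditionally valid for finite products and valid for ultraproducts whenever ``$\{z\}$ is a trivial substructure'' is first order (e.g.\ for finite $\mathcal{L}$), which is the same implicit assumption the paper makes.
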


\begin{proof}
This can be proved applying Theorem \ref{open horn para relaciones} in the
same way as we applied \ref{(positive) open para relaciones} to prove
Theorem \ref{(positive) open}.
\end{proof}

\begin{corollary}
Let $\mathcal{K}$ be any class of $\mathcal{L}$-algebras contained in a
locally finite variety. Suppose $\mathbf{A}\rightarrow f^{\mathbf{A}}$ is a
map which assigns to each $\mathbf{A}\in \mathcal{K}$ an $n$-ary operation $%
f^{\mathbf{A}}:A^{n}$ $\rightarrow A$. The following are equivalent:

\begin{enumerate}
\item[(1)] There is a conjunction of $\mathcal{L}$-formulas of the form $%
\left( \bigwedge_{i}p_{i}=q_{i}\right) \rightarrow r=s$ which defines $f$ in 
$\mathcal{K}$.

\item[(2)] The following conditions hold:

\begin{enumerate}
\item[(a)] For all $\mathbf{A}\in \mathcal{K}$ and all $\{a\}\leq \mathbf{A}$%
, we have $f^{\mathbf{A}}(a,\dots ,a)=a$.

\item[(b)] For all $\mathbf{S}_{1}\leq \mathbf{A}_{1}\times \ldots \times 
\mathbf{A}_{k}$, $\mathbf{S}_{2}\leq \mathbf{B}_{1}\times \ldots \times 
\mathbf{B}_{l}$, with $\mathbf{A}_{1},\ldots ,\mathbf{A}_{k},\mathbf{B}%
_{1},\ldots ,$ $\mathbf{B}_{l}\in \mathcal{K}$, all isomorphisms $\sigma :%
\mathbf{S}_{1}\rightarrow \mathbf{S}_{2}$, and all $p_{1},\ldots ,p_{n}\in
S_{1}$ such that $f^{\mathbf{A}_{1}}\times \ldots \times f^{\mathbf{A}%
_{k}}(p_{1},\ldots ,p_{n})\in S_{1}$, we have\newline
$\sigma (f^{\mathbf{A}_{1}}\times \ldots \times f^{\mathbf{A}%
_{k}}(p_{1},\ldots ,p_{n}))=f^{\mathbf{B}_{1}}\times \ldots \times f^{%
\mathbf{B}_{l}}(\sigma (p_{1}),\dots ,\sigma (p_{n}))$.
\end{enumerate}
\end{enumerate}
\end{corollary}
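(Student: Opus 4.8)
The plan is to deduce this corollary from the functional open-Horn theorem (Theorem \ref{openHorn}) in exactly the way Corollary \ref{open para localy finite} was deduced from Theorem \ref{(positive) open}. I would introduce a fresh $n$-ary function symbol $f\notin \mathcal{L}$ and form the class of $\mathcal{L}\cup \{f\}$-structures $\mathcal{K}_{f}=\{(\mathbf{A},f^{\mathbf{A}}):\mathbf{A}\in \mathcal{K}\}$. Here $m=1$, $\mathcal{L}^{\prime }=\mathcal{L}\cup \{f\}$, and the $\mathcal{L}$-reduct satisfies $(\mathcal{K}_{f})_{\mathcal{L}}=\mathcal{K}$. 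I would then apply the \emph{moreover} part of Theorem \ref{openHorn} in its $\mathrm{OpStHorn}(\mathcal{L})$ form to $\mathcal{K}_{f}$, so that the $\mathbb{P}_{u}$ operator disappears and condition (2) becomes a statement about $\mathbb{P}_{\mathrm{fin}}(\mathcal{K}_{f})$.

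Two preliminary observations justify the application. First, since $\mathcal{K}$ lies in a locally finite variety $\mathcal{V}$, the free algebra $F_{\mathcal{V}}(n+1)$ is finite; as every $(n+1)$-generated member of $\mathcal{V}$ is a quotient of $F_{\mathcal{V}}(n+1)$ and a finite algebra has only finitely many congruences, $\mathcal{V}$ (and hence $\mathcal{K}_{\mathcal{L}}=\mathcal{K}$) has only finitely many isomorphism types of $(n+1)$-generated subalgebras and each is finite. This is exactly the hypothesis needed to invoke the moreover part. Second, in a purely algebraic language the atomic formulas are the equations, so an open strict Horn formula is, up to logical equivalence, precisely a conjunction of clauses of the form $\left( \bigwedge_{i}p_{i}=q_{i}\right) \rightarrow r=s$ (a bare equation being obtained by taking a trivially true antecedent). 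Hence a formula witnessing (1) of Theorem \ref{openHorn} for $\mathcal{K}_{f}$ is the same thing as a formula witnessing (1) of the present corollary, and conversely.

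It then remains to verify that condition (2) of the corollary is literally condition (2) of Theorem \ref{openHorn}, with $\mathbb{P}_{u}$ removed and in the strict Horn version, transcribed for $\mathcal{K}_{f}$. A member of $\mathbb{P}_{\mathrm{fin}}(\mathcal{K}_{f})$ has $\mathcal{L}$-reduct $\mathbf{A}_{1}\times \ldots \times \mathbf{A}_{k}$ with each $\mathbf{A}_{i}\in \mathcal{K}$, and by the product identity $f^{\mathbf{A}_{1}}\times \ldots \times f^{\mathbf{A}_{k}}=f^{\mathbf{A}_{1}\times \ldots \times \mathbf{A}_{k}}$ recorded in the Notation section, it interprets $f$ as $f^{\mathbf{A}_{1}}\times \ldots \times f^{\mathbf{A}_{k}}$. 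Writing $\mathbf{S}_{1}=\mathbf{A}_{0}\leq \mathbf{A}_{1}\times \ldots \times \mathbf{A}_{k}$ and $\mathbf{S}_{2}=\mathbf{B}_{0}\leq \mathbf{B}_{1}\times \ldots \times \mathbf{B}_{l}$, the preservation clause of Theorem \ref{openHorn}(2) becomes exactly (2)(b), while, since $(\mathcal{K}_{f})_{\mathcal{L}}=\mathcal{K}$, the parenthetical trivial-substructure clause becomes exactly (2)(a). Thus the two conditions coincide and the stated equivalence follows.

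The argument is essentially bookkeeping, and I do not expect a genuine obstacle. The one point demanding care is the translation in the last paragraph: passing from the abstract form of (2) in Theorem \ref{openHorn}, quantified over $\mathbf{A},\mathbf{B}\in \mathbb{P}_{\mathrm{fin}}(\mathcal{K}_{f})$, to the concrete form (2)(b) quantified over subalgebras of finite products of members of $\mathcal{K}$ — in particular keeping track that the two products may have different lengths $k$ and $l$, and that the interpretation of $f$ on a finite product is the coordinatewise operation.
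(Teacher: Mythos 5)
Your proposal is correct and is essentially the paper's own proof, which simply applies the moreover part of Theorem \ref{openHorn} (in its $\mathrm{OpStHorn}(\mathcal{L})$ form) to the class $\{(\mathbf{A},f^{\mathbf{A}}):\mathbf{A}\in \mathcal{K}\}$; your version just spells out the bookkeeping (the local-finiteness check, the identification of open strict Horn formulas with conjunctions of quasi-equations, and the transcription of condition (2)) that the paper leaves implicit.
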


\begin{proof}
Apply the moreover part of Theorem \ref{openHorn} to the class $\{(\mathbf{A}%
,f^{\mathbf{A}}):\mathbf{A}\in \mathcal{K}\}$.
\end{proof}

\section{\label{Sec: definibilidad por atomicas}Definability by conjunctions
of atomic formulas}

\begin{theorem}
\label{conjuncion de atomicas para relaciones}Let $\mathcal{L}\subseteq 
\mathcal{L}^{\prime }$ be first order languages and let $R\in \mathcal{L}%
^{\prime }-\mathcal{L}$ be an $n$-ary relation symbol. Let $\mathcal{K}$ be
any class of $\mathcal{L}^{\prime }$-structures The following are equivalent:

\begin{enumerate}
\item[(1)] There is a formula in $\left[ \bigwedge \mathrm{At}(\mathcal{L})%
\right] $ which defines $R$ in $\mathcal{K}$.

\item[(2)] For all $\mathbf{A}\in \mathbb{P}_{u}\mathbb{P}_{\mathrm{fin}}(%
\mathcal{K})$, all $\mathbf{B}\in \mathbb{P}_{u}(\mathcal{K})$, $\mathbf{A}%
_{0}\leq \mathbf{A}_{\mathcal{L}}$, $\mathbf{B}_{0}\leq \mathbf{B}_{\mathcal{%
L}}$, all homomorphisms $\sigma :\mathbf{A}_{0}\rightarrow \mathbf{B}_{0}$,
and all $a_{1},\ldots ,a_{n}\in A_{0}$, we have that $(a_{1},\ldots
,a_{n})\in R^{\mathbf{A}}$ implies $(\sigma (a_{1}),\ldots ,\sigma
(a_{n}))\in R^{\mathbf{B}}$.
\end{enumerate}

\noindent Moreover, if $\mathcal{K}_{\mathcal{L}}$ has finitely many
isomorphism types of $n$-generated substructures and each one is finite,
then we can remove the operator $\mathbb{P}_{u}$ from (2).
\end{theorem}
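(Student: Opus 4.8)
The plan is to follow the two-implication pattern established in the previous theorems, reducing as much as possible to results already proved. The statement characterizes definability by a single conjunction of atomic $\mathcal{L}$-formulas, so the natural strategy is to use Theorem \ref{open horn para relaciones} (definability by open Horn formulas) as scaffolding and then argue that the Horn formula can be collapsed to a plain conjunction of atomic formulas under the stronger preservation hypothesis in (2).

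For (1)$\Rightarrow$(2), I would argue exactly as in the earlier theorems. If $\varphi(\vec{x})\in\left[\bigwedge\mathrm{At}(\mathcal{L})\right]$ defines $R$ in $\mathcal{K}$, then it defines $R$ in $\mathbb{P}_u\mathbb{P}_{\mathrm{fin}}(\mathcal{K})$ as well, since a conjunction of atomic formulas is preserved under products, ultraproducts, and substructures. Given $\vec{a}\in R^{\mathbf{A}}$ with $\mathbf{A}\in\mathbb{P}_u\mathbb{P}_{\mathrm{fin}}(\mathcal{K})$, the atomic formula $\varphi$ holds at $\vec{a}$, descends to the generated substructure $\mathbf{A}_0$, transfers across the \emph{homomorphism} $\sigma$ (this is where atomicity, as opposed to open-Horn, is essential — homomorphisms preserve atomic formulas but not negations), lands in $\mathbf{B}_0\leq\mathbf{B}_{\mathcal{L}}$, and lifts to $\mathbf{B}\in\mathbb{P}_u(\mathcal{K})$, giving $(\sigma(a_1),\dots,\sigma(a_n))\in R^{\mathbf{B}}$.

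For (2)$\Rightarrow$(1), the plan is to first invoke Theorem \ref{open horn para relaciones}: since (2) here (with homomorphisms, and with $\mathbf{B}$ ranging over $\mathbb{P}_u(\mathcal{K})$) is at least as strong as condition (2) of that theorem (which uses isomorphisms and $\mathbf{B}\in\mathbb{P}_u\mathbb{P}_{\mathrm{fin}}(\mathcal{K})$), there is an open Horn formula defining $R$ in $\mathcal{K}$. Reasoning as in the Horn proof, I may assume $R$ is defined by a formula of the form $\bigwedge_{j}(\pi_j\rightarrow\alpha_j)\wedge\bigwedge_{j}\bigvee_i\lnot\beta_i^j$. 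The key new step is to eliminate both the hypotheses $\pi_j$ from the strict-Horn clauses and the purely negative clauses $\bigvee_i\lnot\beta_i^j$, using the full strength of the homomorphism hypothesis. Concretely, for a tuple $\vec{a}\in R^{\mathbf{A}}$, I would consider the conjunction of \emph{all} atomic formulas true of $\vec{a}$ in $\mathbf{A}$ (not the $\pm$-atomic diagram), and show via (2) — now applied with a homomorphism into the appropriate $\mathbf{B}$ — that this positive atomic type already forces membership in $R$; a compactness argument then reduces each such infinite conjunction to a finite one and reduces the disjunction over all $(\mathbf{A},\vec{a})$ to a finite one, finally collapsing to a single conjunction because homomorphism-preservation rules out the need for case distinction.

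The main obstacle I expect is justifying the collapse from a Horn formula (or a disjunction of conjunctions of atomic formulas) down to a single conjunction of atomic formulas. In the open-Horn proof one exploited products in $\mathbb{P}_{\mathrm{fin}}(\mathcal{K})$ to pick out a uniform clause; here the analogous difficulty is to show that the preservation under arbitrary homomorphisms (rather than just isomorphisms) forces the defining formula to be purely positive and conjunctive, with no genuine disjunctions or implicational hypotheses surviving. I anticipate this is handled by a product-and-diagonal argument: if two different tuples in $R$ (over possibly different structures in $\mathcal{K}$) required different disjuncts, one forms their product in $\mathbb{P}_{\mathrm{fin}}(\mathcal{K})$ and applies the homomorphism condition to derive a contradiction, exactly as the element $s\in S$ was extracted in the Horn proof. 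The moreover part should then follow routinely: finiteness of the isomorphism types of $n$-generated substructures lets one replace each atomic type by one of finitely many fixed atomic formulas and discharge the compactness steps, so the operator $\mathbb{P}_u$ can be dropped, just as in Theorem \ref{open horn para relaciones}.
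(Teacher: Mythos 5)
Your overall strategy --- produce a positive open formula $\pi_1\vee\dots\vee\pi_k$ over the finite products and then collapse it to a single disjunct by a product/diagonal argument --- is the same as the paper's, but there is a genuine gap at the step where you pass to $\mathbb{P}_{\mathrm{fin}}(\mathcal{K})$. You claim that condition (2) here ``is at least as strong as'' condition (2) of Theorem \ref{open horn para relaciones}; this is backwards in the one respect that matters: that condition (and, more importantly, the hypothesis of Theorem \ref{(positive) open para relaciones} applied to the class $\mathbb{P}_{\mathrm{fin}}(\mathcal{K})$) requires the preservation conclusion for $\mathbf{B}\in\mathbb{P}_{u}\mathbb{P}_{\mathrm{fin}}(\mathcal{K})$, whereas (2) as stated only provides it for $\mathbf{B}\in\mathbb{P}_{u}(\mathcal{K})$. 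The missing observation --- the first line of the paper's proof --- is that (2) automatically extends to all $\mathbf{B}\in\mathbb{ISPP}_{u}(\mathcal{K})$: compose $\sigma$ with the canonical projections to get homomorphisms into members of $\mathbb{P}_{u}(\mathcal{K})$, apply (2) coordinatewise, and recombine using $R^{\mathbf{S}_{1}\times\dots\times\mathbf{S}_{k}}=R^{\mathbf{S}_{1}}\times\dots\times R^{\mathbf{S}_{k}}$.

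This is not a cosmetic point, because the collapse to a single $\pi_{j}$ genuinely needs the disjunction to define $R$ over $\mathbb{P}_{\mathrm{fin}}(\mathcal{K})$ and not merely over $\mathcal{K}$: in the product $\prod_{s}\mathbf{A}_{s}$ the combined tuple $\vec{p}$ lies in $R^{\prod_{s}\mathbf{A}_{s}}$, but to conclude that some single disjunct holds at $\vec{p}$ (and hence at every component) you must know that $\pi_{1}\vee\dots\vee\pi_{k}$ defines $R$ at the product itself. A disjunction of atomic conjunctions that defines $R$ on $\mathcal{K}$ can fail to define it on products, since different factors may witness different disjuncts. Your plan constructs the positive diagrams only for tuples ``over possibly different structures in $\mathcal{K}$'', so the formula you obtain is not known to work where the diagonal argument is carried out. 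Once the extension of (2) to $\mathbb{ISPP}_{u}(\mathcal{K})$ is in place, the cleanest route (the paper's) is to apply the positive/homomorphism version of Theorem \ref{(positive) open para relaciones} directly to the class $\mathbb{P}_{\mathrm{fin}}(\mathcal{K})$; this makes your detour through the open Horn theorem and the subsequent elimination of implicational hypotheses and negative clauses unnecessary. With that repair, the collapse argument and the moreover part go through as you describe.
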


\begin{proof}
(1)$\Rightarrow $(2). This is analogous to the proof of (1)$\Rightarrow $(2)
in Theorem \ref{open horn para relaciones}.

(2)$\Rightarrow $(1). Note that (2) holds also when $\mathbf{B}$ is in $%
\mathbb{ISPP}_{u}(\mathcal{K})$. Since $\mathbb{P}_{u}\mathbb{P}_{\mathrm{fin%
}}(\mathcal{K})\subseteq \mathbb{ISPP}_{u}(\mathcal{K})$, applying Theorem %
\ref{(positive) open para relaciones} to the class $\mathbb{P}_{\mathrm{fin}%
}(\mathcal{K})$ we have that there is an $\mathcal{L}$-formula $\varphi =\pi
_{1}\vee \ldots \vee \pi _{k}$, with each $\pi _{i}$ a conjunction of atomic
formulas, which defines $R$ in $\mathbb{P}_{\mathrm{fin}}(\mathcal{K})$.
Using the same argument as in the proof of Theorem \ref{open horn para
relaciones} we can prove that there is $j$ such that $\pi _{j}$ defines $R$
in $\mathcal{K}$.

The proof of the moreover part is similar to the corresponding part of the
proof of Theorem \ref{open horn para relaciones}.
\end{proof}

\begin{corollary}
\label{conjuncion de atomicas para relaciones caso localmente finito}Let $%
\mathcal{K}$ be a class of $\mathcal{L}$-algebras contained in a locally
finite variety. Suppose $\mathbf{A}\rightarrow R^{\mathbf{A}}$ is a map
which assigns to each $\mathbf{A}\in \mathcal{K}$ an $n$-ary relation $R^{%
\mathbf{A}}\subseteq A^{n}$. The following are equivalent:

\begin{enumerate}
\item[(1)] There is a formula in $\left[ \bigwedge \mathrm{At}(\mathcal{L})%
\right] $ which defines $R$ in $\mathcal{K}$.

\item[(2)] For all $k\in \mathbb{N}$, all $\mathbf{A}_{1},\ldots ,\mathbf{A}%
_{k},\mathbf{A}\in \mathcal{K}$, all $\mathbf{S}\leq \mathbf{A}_{1}\times
\ldots \times \mathbf{A}_{k}$, all homomorphisms $\sigma :\mathbf{S}%
\rightarrow \mathbf{A}$, and all $p_{1},\dots ,p_{n}\in S$, we have that $%
(p_{1},\ldots ,p_{n})\in R^{\mathbf{A}_{1}}\times \ldots \times R^{\mathbf{A}%
_{k}}$ implies $(\sigma (p_{1}),\dots ,\sigma (p_{n}))\in R^{\mathbf{A}}$.
\end{enumerate}
\end{corollary}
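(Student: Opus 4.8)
The plan is to deduce this corollary from the moreover part of Theorem \ref{conjuncion de atomicas para relaciones}, in exactly the way Corollary \ref{conjuncion de atomicas para relaciones caso localmente finito}'s companions were obtained (compare the one-line proofs of Corollaries \ref{(positive) open locally finite para relaciones} and \ref{open para localy finite}). First I would introduce a fresh $n$-ary relation symbol $R\notin\mathcal{L}$, set $\mathcal{L}'=\mathcal{L}\cup\{R\}$, and form the class of $\mathcal{L}'$-structures $\mathcal{K}'=\{(\mathbf{A},R^{\mathbf{A}}):\mathbf{A}\in\mathcal{K}\}$. By construction $(\mathcal{K}')_{\mathcal{L}}=\mathcal{K}$, and since $R^{(\mathbf{A},R^{\mathbf{A}})}=R^{\mathbf{A}}$, a formula $\varphi\in[\bigwedge\mathrm{At}(\mathcal{L})]$ defines $R$ in $\mathcal{K}'$ precisely when it defines the assignment $\mathbf{A}\mapsto R^{\mathbf{A}}$ in $\mathcal{K}$. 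Thus condition (1) of the corollary is literally condition (1) of Theorem \ref{conjuncion de atomicas para relaciones} applied to $\mathcal{K}'$.

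Next I would verify that the finiteness hypothesis of the moreover part holds for $(\mathcal{K}')_{\mathcal{L}}=\mathcal{K}$. Since $\mathcal{K}$ lies inside a locally finite variety $\mathcal{V}$, the free algebra $\mathbf{F}_{\mathcal{V}}(n)$ is finite. Every $n$-generated substructure of a member of $\mathcal{K}$ is an $n$-generated algebra in $\mathcal{V}$, hence a homomorphic image of $\mathbf{F}_{\mathcal{V}}(n)$; as a finite algebra has only finitely many quotients (and each is finite), $\mathcal{K}$ has finitely many isomorphism types of $n$-generated substructures, each finite. This is exactly what the moreover part requires, so Theorem \ref{conjuncion de atomicas para relaciones} yields the equivalence of its (1) with its (2) after deleting the operator $\mathbb{P}_u$.

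It then remains to translate the $\mathbb{P}_u$-free version of condition (2) of Theorem \ref{conjuncion de atomicas para relaciones} into condition (2) of the corollary. An $\mathbf{A}\in\mathbb{P}_{\mathrm{fin}}(\mathcal{K}')$ is a product $(\mathbf{A}_1,R^{\mathbf{A}_1})\times\cdots\times(\mathbf{A}_k,R^{\mathbf{A}_k})$ with each $\mathbf{A}_i\in\mathcal{K}$; by the identities $f^{\mathbf{S}_1}\times\cdots\times f^{\mathbf{S}_k}=f^{\mathbf{S}_1\times\cdots\times\mathbf{S}_k}$ and $R^{\mathbf{S}_1}\times\cdots\times R^{\mathbf{S}_k}=R^{\mathbf{S}_1\times\cdots\times\mathbf{S}_k}$ recorded in Section 1, its $\mathcal{L}$-reduct is $\mathbf{A}_1\times\cdots\times\mathbf{A}_k$ and its interpretation of $R$ is $R^{\mathbf{A}_1}\times\cdots\times R^{\mathbf{A}_k}$. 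Hence $\mathbf{A}_0\leq\mathbf{A}_{\mathcal{L}}$ becomes a subalgebra $\mathbf{S}\leq\mathbf{A}_1\times\cdots\times\mathbf{A}_k$, the hypothesis $(a_1,\dots,a_n)\in R^{\mathbf{A}}$ becomes $(p_1,\dots,p_n)\in R^{\mathbf{A}_1}\times\cdots\times R^{\mathbf{A}_k}$, and the codomain $\mathbf{B}=(\mathbf{A},R^{\mathbf{A}})\in\mathcal{K}'$ has $R^{\mathbf{B}}=R^{\mathbf{A}}$. Finally, a homomorphism $\sigma:\mathbf{A}_0\to\mathbf{B}_0$ with $\mathbf{B}_0\leq\mathbf{B}_{\mathcal{L}}=\mathbf{A}$ is the same datum as a homomorphism $\sigma:\mathbf{S}\to\mathbf{A}$ (take $\mathbf{B}_0=\sigma(\mathbf{S})$ in one direction, forget $\mathbf{B}_0$ in the other), and because $R\notin\mathcal{L}$ the conclusion $(\sigma(p_1),\dots,\sigma(p_n))\in R^{\mathbf{B}}=R^{\mathbf{A}}$ is independent of the chosen $\mathbf{B}_0$. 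With this dictionary the two forms of condition (2) coincide.

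I expect the only genuinely delicate point to be this last bookkeeping step: one must check that the asymmetry $\mathbb{P}_u\mathbb{P}_{\mathrm{fin}}$ versus $\mathbb{P}_u$ in Theorem \ref{conjuncion de atomicas para relaciones}, once $\mathbb{P}_u$ is removed, collapses to a product on the domain side and a single algebra on the codomain side, matching $\mathbf{S}\leq\mathbf{A}_1\times\cdots\times\mathbf{A}_k$ and $\sigma:\mathbf{S}\to\mathbf{A}$ in the corollary. The local-finiteness reduction is the other step worth stating explicitly, but it is immediate from the finiteness of $\mathbf{F}_{\mathcal{V}}(n)$; everything else is direct substitution.
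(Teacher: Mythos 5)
Your proposal is correct and is exactly the paper's argument: the paper's proof is the one-line instruction to apply the moreover part of Theorem \ref{conjuncion de atomicas para relaciones} to the class $\{(\mathbf{A},R^{\mathbf{A}}):\mathbf{A}\in \mathcal{K}\}$, and you have simply spelled out the local-finiteness verification and the bookkeeping that the paper leaves implicit. Both of those details check out, so nothing further is needed.
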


\begin{proof}
Apply the moreover part of Theorem \ref{conjuncion de atomicas para
relaciones} to the class $\{(\mathbf{A},R^{\mathbf{A}}):\mathbf{A}\in 
\mathcal{K}\}$.
\end{proof}

Next are the results for definability of functions with conjunction of
atomic formulas.

\begin{theorem}
\label{conjuncion de atomicas}Let $\mathcal{L}\subseteq \mathcal{L}^{\prime
} $ be first order languages and let $f_{1},\ldots ,f_{m}\in \mathcal{L}%
^{\prime }-\mathcal{L}$ be $n$-ary function symbols. For a class $\mathcal{K}
$ of $\mathcal{L}^{\prime }$-structures, the following are equivalent

\begin{enumerate}
\item[(1)] There is a formula $\varphi $ in $\left[ \bigwedge \mathrm{At}(%
\mathcal{L})\right] $ which defines $\vec{f}$ in $\mathcal{K}$.

\item[(2)] For all $\mathbf{A}\in \mathbb{P}_{u}\mathbb{P}_{\mathrm{fin}}(%
\mathcal{K})$, all $\mathbf{B}\in \mathbb{P}_{u}(\mathcal{K})$, $\mathbf{A}%
_{0}\leq \mathbf{A}_{\mathcal{L}}$, $\mathbf{B}_{0}\leq \mathbf{B}_{\mathcal{%
L}}$, all homomorphisms $\sigma :\mathbf{A}_{0}\rightarrow \mathbf{B}_{0}$,
and all $a_{1},\ldots ,a_{n}\in A_{0}$ such that $f_{1}^{\mathbf{A}}(\vec{a}%
),\ldots ,f_{m}^{\mathbf{A}}(\vec{a})\in A_{0}$, we have $\sigma (f_{i}^{%
\mathbf{A}}(\vec{a}))=f_{i}^{\mathbf{B}}(\sigma (a_{1}),\ldots ,\sigma
(a_{n}))$ for all $i\in \{1,\ldots ,m\}$.
\end{enumerate}

\noindent Moreover, if $\mathcal{K}_{\mathcal{L}}$ has finitely many
isomorphism types of $(n+m)$-generated substructures and each one is finite,
then we can remove the operator $\mathbb{P}_{u}$ from (2).
\end{theorem}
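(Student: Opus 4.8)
The plan is to deduce this functional statement from its relational counterpart, Theorem \ref{conjuncion de atomicas para relaciones}, exactly as Theorem \ref{(positive) open} was obtained from Theorem \ref{(positive) open para relaciones}. First I would introduce a fresh $(n+m)$-ary relation symbol $R\notin \mathcal{L}^{\prime }$ and, for each $\mathbf{A}\in \mathcal{K}$, let $R^{\mathbf{A}}$ be the graph of $\vec{f}^{\mathbf{A}}$, namely
\begin{equation*}
R^{\mathbf{A}}=\{(a_{1},\ldots ,a_{n},f_{1}^{\mathbf{A}}(\vec{a}),\ldots ,f_{m}^{\mathbf{A}}(\vec{a})):a_{1},\ldots ,a_{n}\in A\},
\end{equation*}
and set $\mathcal{K}_{R}=\{(\mathbf{A},R^{\mathbf{A}}):\mathbf{A}\in \mathcal{K}\}$. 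Since $\mathcal{K}_{R}\vDash \vec{f}(\vec{x})=\vec{z}\leftrightarrow R(\vec{x},\vec{z})$, any $\varphi \in \left[ \bigwedge \mathrm{At}(\mathcal{L})\right] $ that defines $R$ in $\mathcal{K}_{R}$ will automatically define $\vec{f}$ in $\mathcal{K}_{R}$, hence in $\mathcal{K}$. So the whole problem reduces to verifying condition (2) of Theorem \ref{conjuncion de atomicas para relaciones} for the languages $\mathcal{L}\subseteq \mathcal{L}^{\prime }\cup \{R\}$, the symbol $R$, and the class $\mathcal{K}_{R}$.

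The central step is to check that condition (2) of the present theorem is precisely the translation of condition (2) of Theorem \ref{conjuncion de atomicas para relaciones} for $\mathcal{K}_{R}$. The key observation is that the graph construction commutes with the relevant class operators: for finite products this is the identity $R^{\mathbf{S}_{1}}\times \ldots \times R^{\mathbf{S}_{k}}=R^{\mathbf{S}_{1}\times \ldots \times \mathbf{S}_{k}}$ recorded in the Notation section (the product of the graphs of the $f_{i}^{\mathbf{S}_{j}}$ is the graph of $f_{1}^{\mathbf{S}_{1}}\times \ldots \times f_{k}^{\mathbf{S}_{k}}$), and for ultraproducts it follows from {\L}o\'s's theorem, since membership in the graph is expressed by the equations $\vec{f}(\vec{x})=\vec{z}$. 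Consequently, for every $\mathbf{A}\in \mathbb{P}_{u}\mathbb{P}_{\mathrm{fin}}(\mathcal{K}_{R})$ and every $\mathbf{B}\in \mathbb{P}_{u}(\mathcal{K}_{R})$ the relation $R^{\mathbf{A}}$ (resp. $R^{\mathbf{B}}$) is still exactly the graph of $\vec{f}^{\mathbf{A}}$ (resp. $\vec{f}^{\mathbf{B}}$). Reading the relational implication through this identification, a tuple $(a_{1},\ldots ,a_{n},f_{1}^{\mathbf{A}}(\vec{a}),\ldots ,f_{m}^{\mathbf{A}}(\vec{a}))$ lies in $R^{\mathbf{A}}$ by construction, the side condition $f_{i}^{\mathbf{A}}(\vec{a})\in A_{0}$ is exactly what places this tuple in $A_{0}^{n+m}$ so that $\sigma $ may be applied coordinatewise, and the conclusion $(\sigma (a_{1}),\ldots ,\sigma (a_{n}),\sigma (f_{1}^{\mathbf{A}}(\vec{a})),\ldots ,\sigma (f_{m}^{\mathbf{A}}(\vec{a})))\in R^{\mathbf{B}}$ unwinds to the equations $\sigma (f_{i}^{\mathbf{A}}(\vec{a}))=f_{i}^{\mathbf{B}}(\sigma (a_{1}),\ldots ,\sigma (a_{n}))$ of the present (2).

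For the moreover part I would note that $(\mathcal{K}_{R})_{\mathcal{L}}=\mathcal{K}_{\mathcal{L}}$, so the hypothesis that $\mathcal{K}_{\mathcal{L}}$ has finitely many isomorphism types of $(n+m)$-generated substructures, each finite, applies verbatim to $\mathcal{K}_{R}$; invoking the moreover part of Theorem \ref{conjuncion de atomicas para relaciones} then removes $\mathbb{P}_{u}$. The implication (1)$\Rightarrow $(2) is routine and parallels (1)$\Rightarrow $(2) of Theorem \ref{conjuncion de atomicas para relaciones}. The main obstacle I anticipate is the bookkeeping in the central step: one must respect the asymmetry that $\mathbf{A}$ ranges over $\mathbb{P}_{u}\mathbb{P}_{\mathrm{fin}}(\mathcal{K}_{R})$ while $\mathbf{B}$ ranges only over $\mathbb{P}_{u}(\mathcal{K}_{R})$ (inherited from definability by conjunctions of atomic formulas), and verify that the graph-preservation under $\mathbb{P}_{\mathrm{fin}}$ and $\mathbb{P}_{u}$ holds on both sides so that the relational and functional conditions really do coincide.
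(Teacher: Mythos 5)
Your proposal is correct and follows exactly the paper's intended argument: the paper's proof of this theorem is a one-line reduction to Theorem \ref{conjuncion de atomicas para relaciones} via the graph construction $\mathcal{K}_{R}$, just as Theorem \ref{(positive) open} was deduced from Theorem \ref{(positive) open para relaciones}, and you have carried out that reduction (including the translation of condition (2), the preservation of the graph under $\mathbb{P}_{\mathrm{fin}}$ and $\mathbb{P}_{u}$, and the moreover part) in full detail.
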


\begin{proof}
This can be proved applying Theorem \ref{conjuncion de atomicas para
relaciones} in the same way as we applied \ref{(positive) open para
relaciones} to prove Theorem \ref{(positive) open}.
\end{proof}

\begin{corollary}
Let $\mathcal{K}$ be a class of $\mathcal{L}$-algebras contained in a
locally finite variety. Suppose $\mathbf{A}\rightarrow f^{\mathbf{A}}$ is a
map which assigns to each $\mathbf{A}\in \mathcal{K}$ an $n$-ary operation $%
f^{\mathbf{A}}:A^{n}$ $\rightarrow A$. The following are equivalent:

\begin{enumerate}
\item[(1)] There is a formula in $\left[ \bigwedge \mathrm{At}(\mathcal{L})%
\right] $ which defines $f$ in $\mathcal{K}$.

\item[(2)] For all $k\in \mathbb{N}$, all $\mathbf{A}_{1},\ldots ,\mathbf{A}%
_{k},\mathbf{A}\in \mathcal{K}$, all $\mathbf{S}\leq \mathbf{A}_{1}\times
\ldots \times \mathbf{A}_{k}$, all homomorphisms $\sigma :\mathbf{S}%
\rightarrow \mathbf{A}$, and all $p_{1},\ldots ,p_{n}\in S$ such that $f^{%
\mathbf{A}_{1}}\times \ldots \times f^{\mathbf{A}_{k}}(p_{1},\ldots
,p_{n})\in S$, we have $\sigma (f^{\mathbf{A}_{1}}\times \ldots \times f^{%
\mathbf{A}_{k}}(p_{1},\ldots ,p_{n}))=f^{\mathbf{A}}(\sigma (p_{1}),\dots
,\sigma (p_{n}))$.
\end{enumerate}
\end{corollary}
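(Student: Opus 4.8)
The plan is to deduce the corollary from the functional theorem on definability by conjunctions of atomic formulas (Theorem~\ref{conjuncion de atomicas}), exactly along the one-line route the statement ``Apply the moreover part\ldots'' suggests, but I will make the reduction explicit and isolate the one nonroutine verification. First I would introduce a fresh $n$-ary function symbol $f\notin\mathcal{L}$, put $\mathcal{L}'=\mathcal{L}\cup\{f\}$, and form the class of $\mathcal{L}'$-structures $\mathcal{K}'=\{(\mathbf{A},f^{\mathbf{A}}):\mathbf{A}\in\mathcal{K}\}$. Since taking the $\mathcal{L}$-reduct merely forgets $f$, we have $(\mathcal{K}')_{\mathcal{L}}=\mathcal{K}$, and a formula of $\left[\bigwedge\mathrm{At}(\mathcal{L})\right]$ defines $f$ in $\mathcal{K}$ precisely when it defines $\vec{f}$ (with $m=1$) in $\mathcal{K}'$. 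Thus item~(1) of the corollary is item~(1) of Theorem~\ref{conjuncion de atomicas} for the pair $\mathcal{L}\subseteq\mathcal{L}'$ and the class $\mathcal{K}'$.

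The key preliminary is to check the hypothesis of the moreover part of Theorem~\ref{conjuncion de atomicas}: that $(\mathcal{K}')_{\mathcal{L}}=\mathcal{K}$ has only finitely many isomorphism types of $(n+1)$-generated substructures, each finite. Let $\mathcal{V}$ be a locally finite variety with $\mathcal{K}\subseteq\mathcal{V}$. Any $(n+1)$-generated substructure of a member of $\mathcal{K}$ lies in $\mathbb{S}(\mathcal{K})\subseteq\mathcal{V}$ and is $(n+1)$-generated, hence finite by local finiteness. Furthermore the free algebra $\mathbf{F}_{\mathcal{V}}(n+1)$ is finite, so it has finitely many congruences; as every $(n+1)$-generated algebra of $\mathcal{V}$ is a quotient of $\mathbf{F}_{\mathcal{V}}(n+1)$, there are only finitely many such algebras up to isomorphism. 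This is the only genuinely nonroutine point and the sole place the ambient local finiteness is used; the remainder is translation.

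Granting the hypothesis, I would apply the moreover part of Theorem~\ref{conjuncion de atomicas}, which asserts that~(1) is equivalent to condition~(2) of that theorem with $\mathbb{P}_u$ deleted, i.e.\ with $\mathbf{A}\in\mathbb{P}_{\mathrm{fin}}(\mathcal{K}')$ and $\mathbf{B}\in\mathcal{K}'$. It then remains to recognize this deleted-$\mathbb{P}_u$ condition as verbatim condition~(2) of the corollary. A typical $\mathbf{A}\in\mathbb{P}_{\mathrm{fin}}(\mathcal{K}')$ is $(\mathbf{A}_1,f^{\mathbf{A}_1})\times\cdots\times(\mathbf{A}_k,f^{\mathbf{A}_k})$ with $\mathbf{A}_i\in\mathcal{K}$; its $\mathcal{L}$-reduct is $\mathbf{A}_1\times\cdots\times\mathbf{A}_k$, and by the product identity recorded in the Notation section its interpretation of $f$ is $f^{\mathbf{A}_1}\times\cdots\times f^{\mathbf{A}_k}$. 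Hence $\mathbf{A}_0\leq\mathbf{A}_{\mathcal{L}}$ is exactly a substructure $\mathbf{S}\leq\mathbf{A}_1\times\cdots\times\mathbf{A}_k$, the elements $a_1,\ldots,a_n$ become $p_1,\ldots,p_n\in S$, and the side condition $f^{\mathbf{A}}(\vec{a})\in A_0$ becomes $f^{\mathbf{A}_1}\times\cdots\times f^{\mathbf{A}_k}(p_1,\ldots,p_n)\in S$.

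Finally $\mathbf{B}\in\mathcal{K}'$ equals $(\mathbf{A},f^{\mathbf{A}})$ with $\mathbf{A}\in\mathcal{K}$, so $\mathbf{B}_{\mathcal{L}}=\mathbf{A}$; a homomorphism $\sigma:\mathbf{A}_0\to\mathbf{B}_0\leq\mathbf{A}$ carries the same information as a homomorphism $\sigma:\mathbf{S}\to\mathbf{A}$ (take $\mathbf{B}_0$ to be the image of $\sigma$ in one direction, and drop the codomain restriction in the other), and the conclusion $\sigma(f^{\mathbf{A}}(\vec{a}))=f^{\mathbf{B}}(\sigma(a_1),\ldots,\sigma(a_n))$ becomes $\sigma(f^{\mathbf{A}_1}\times\cdots\times f^{\mathbf{A}_k}(\vec{p}))=f^{\mathbf{A}}(\sigma(p_1),\ldots,\sigma(p_n))$. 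This is exactly condition~(2) of the corollary, so the two equivalences coincide. I anticipate no difficulty beyond the finiteness verification above; the rest is careful unwinding of the reduct and product conventions.
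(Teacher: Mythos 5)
Your proposal is correct and follows exactly the paper's route: the paper's entire proof is ``Apply the moreover part of Theorem \ref{conjuncion de atomicas} to the class $\{(\mathbf{A},f^{\mathbf{A}}):\mathbf{A}\in \mathcal{K}\}$,'' and you have simply made explicit the finiteness verification (via the finite free algebra $\mathbf{F}_{\mathcal{V}}(n+1)$) and the unwinding of $\mathbb{P}_{\mathrm{fin}}(\mathcal{K}')$ and the reduct conventions that the paper leaves to the reader. All of these details check out, including the correct use of $n+m=n+1$ for the generation bound.
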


\begin{proof}
Apply the moreover part of Theorem \ref{conjuncion de atomicas} to the class 
$\{(\mathbf{A},f^{\mathbf{A}}):\mathbf{A}\in \mathcal{K}\}$.
\end{proof}

\subsection*{Applications to definable principal congruences}

A variety $\mathcal{V}$ in a language $\mathcal{L}$ has \emph{equationally
definable principal congruences }if there exists a formula $\varphi
(x,y,z,w)\in \left[ \bigwedge \mathrm{At}(\mathcal{L})\right] $ such that%
\begin{equation*}
\theta ^{\mathbf{A}}(a,b)=\{(c,d):\mathcal{V}\vDash \varphi (a,b,c,d)\}
\end{equation*}%
for any $a,b\in A$, $\mathbf{A}\in \mathcal{V}$. (This notion is called 
\emph{equationally definable principal congruences in the restricted sense}
in \cite{bu}.) The variety $\mathcal{V}$ has the \emph{Fraser-Horn property}
if for every $\mathbf{A}_{1},\mathbf{A}_{2}\in \mathcal{V}$ and $\theta \in 
\mathrm{Con}(\mathbf{A}_{1}\times \mathbf{A}_{2})$, there are $\theta
_{1}\in \mathrm{Con}(\mathbf{A}_{1})$ and $\theta _{2}\in \mathrm{Con}(%
\mathbf{A}_{2})$ such that $\theta =\theta _{1}\times \theta _{2}$ (i.e.,
algebras in $\mathcal{V}$ do not have skew congruences).

Here is an interesting application of Theorem \ref{conjuncion de atomicas
para relaciones}.

\begin{proposition}
\label{aplicacion a dpc 2}Let $\mathcal{V}$ be a variety with definable
principal congruences. The following are equivalent:

\begin{enumerate}
\item[(1)] $\mathcal{V}$ has equationally definable principal congruences.

\item[(2)] $\mathcal{V}$ has the congruence extension property and the
Fraser-Horn property.
\end{enumerate}
\end{proposition}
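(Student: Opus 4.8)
The plan is to use Theorem~\ref{conjuncion de atomicas para relaciones} applied to the principal congruence relation, exactly as in the proof of (3)$\Rightarrow$(2) of Proposition~\ref{aplicacion a dpc}. Since $\mathcal{V}$ has definable principal congruences, the class $\mathcal{K}=\{(\mathbf{A},R^{\mathbf{A}}):\mathbf{A}\in\mathcal{V}\}$, where $R^{\mathbf{A}}=\{(a,b,c,d):(a,b)\in\theta^{\mathbf{A}}(c,d)\}$, is a first-order class closed under $\mathbb{P}_u$. Condition (1) is precisely the statement that $R$ is definable in $\mathcal{V}$ by a formula in $\left[\bigwedge\mathrm{At}(\mathcal{L})\right]$. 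So the whole proposition reduces to showing that condition (2) of Theorem~\ref{conjuncion de atomicas para relaciones} for this $R$ is equivalent to the congruence extension property (CEP) together with the Fraser-Horn property (FHP).

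First I would unwind what condition (2) of Theorem~\ref{conjuncion de atomicas para relaciones} says here. By the argument in Proposition~\ref{aplicacion a dpc} it amounts to: for all $\mathbf{A}\in\mathbb{P}_{\mathrm{fin}}(\mathcal{V})$ (it suffices to take finite products, absorbing $\mathbb{P}_u$ since $\mathcal{V}$ is first-order and congruence-distributive facts are preserved), all $\mathbf{B}\in\mathcal{V}$, all substructures $\mathbf{A}_0\leq\mathbf{A}$, $\mathbf{B}_0\leq\mathbf{B}$, every homomorphism $\sigma:\mathbf{A}_0\rightarrow\mathbf{B}_0$ and all $a,b,c,d\in A_0$, the implication $(a,b)\in\theta^{\mathbf{A}}(c,d)\Rightarrow(\sigma a,\sigma b)\in\theta^{\mathbf{B}}(\sigma c,\sigma d)$ holds. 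The point of the theorem (versus the open-formula Theorem~\ref{(positive) open para relaciones}) is the asymmetry: the source $\mathbf{A}$ ranges over \emph{products} $\mathbf{A}_1\times\cdots\times\mathbf{A}_k$ while the target $\mathbf{B}$ need not be a product. Using Corollary~\ref{conjuncion de atomicas para relaciones caso localmente finito}-style reformulation, this is cleanest to phrase as: for $\mathbf{S}\leq\mathbf{A}_1\times\cdots\times\mathbf{A}_k$ and $\sigma:\mathbf{S}\rightarrow\mathbf{B}$, preservation of the principal-congruence relation holds.

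The substantive content splits into two preservation phenomena. For (2)$\Rightarrow$(1), taking $k=1$ and $\sigma$ an inclusion $\mathbf{A}_0\leq\mathbf{B}$ recovers $\theta^{\mathbf{A}_0}(c,d)=\theta^{\mathbf{B}}(c,d)\cap A_0^2$, which is the CEP (via the same cited fact from \cite{bl-pi} used in Proposition~\ref{aplicacion a dpc}); taking $\sigma$ to be a projection $\mathbf{A}_1\times\mathbf{A}_2\rightarrow\mathbf{A}_i$ and using products forces principal congruences on a product to decompose, which yields the FHP — skew congruences would violate the homomorphism condition precisely because atomic formulas (hence the defining conjunction) are preserved by direct factors. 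For (1)$\Rightarrow$(2), I would show that CEP lets me replace $\theta^{\mathbf{A}}(c,d)$ restricted to $A_0$ by $\theta^{\mathbf{A}_0}(c,d)$ (just as in (iii) of Proposition~\ref{aplicacion a dpc}), and FHP lets me reduce the product source $\mathbf{A}=\prod\mathbf{A}_i$ so that a principal congruence on $\mathbf{A}$ is computed coordinatewise, after which preservation under the homomorphism $\sigma$ follows from the general monotonicity fact (i) of Proposition~\ref{aplicacion a dpc}.

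The main obstacle I expect is the Fraser-Horn direction, specifically correctly exploiting the product structure built into Theorem~\ref{conjuncion de atomicas para relaciones}: I must verify that the failure of FHP (existence of a skew congruence $\theta$ on some $\mathbf{A}_1\times\mathbf{A}_2$ with $\theta\neq\theta_1\times\theta_2$) produces a genuine counterexample to condition (2), and conversely that FHP is exactly what is needed to push a principal congruence through a homomorphism out of a product. This requires care because principal congruences on a product are not automatically products of principal congruences even under FHP; one uses FHP to split $\theta^{\mathbf{A}_1\times\mathbf{A}_2}((c_1,c_2),(d_1,d_2))$ into $\theta^{\mathbf{A}_1}(c_1,d_1)\times\theta^{\mathbf{A}_2}(c_2,d_2)$ and then argues coordinatewise. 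Getting this decomposition to interact properly with the substructure $\mathbf{A}_0$ and the homomorphism $\sigma$ is the delicate bookkeeping, but no new ideas beyond CEP, FHP, and the two cited folklore facts (i) and the \cite{bl-pi} characterization should be required.
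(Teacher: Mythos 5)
Your overall strategy is the paper's: apply Theorem~\ref{conjuncion de atomicas para relaciones} to the class $\mathcal{K}=\{(\mathbf{A},R^{\mathbf{A}}):\mathbf{A}\in\mathcal{V}\}$, use definability of principal congruences to get $\mathbb{P}_{u}(\mathcal{K})\subseteq\mathcal{K}$, and verify the preservation condition via the congruence extension property plus the monotonicity fact, exactly as in (3)$\Rightarrow$(2) of Proposition~\ref{aplicacion a dpc}. One clarification on the direction CEP$+$FHP$\Rightarrow$EDPC: the role of the Fraser--Horn property is precisely to give $\mathbb{P}_{\mathrm{fin}}(\mathcal{K})\subseteq\mathcal{K}$, because by the standard fact (FHP iff $\theta^{\mathbf{A}_{1}\times\cdots\times\mathbf{A}_{n}}(\vec{a},\vec{b})=\theta^{\mathbf{A}_{1}}(a_{1},b_{1})\times\cdots\times\theta^{\mathbf{A}_{n}}(a_{n},b_{n})$) the product relation $R^{\mathbf{A}_{1}}\times R^{\mathbf{A}_{2}}$ coincides with the principal-congruence relation of the product algebra. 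Once you have this, the source structure in condition (2) of the theorem is just another member of $\mathcal{K}$, and no coordinatewise decomposition or ``delicate bookkeeping'' with $\mathbf{A}_{0}$ and $\sigma$ is needed; CEP and monotonicity finish it outright. (Your paragraph labels for the two directions are also swapped, but that is cosmetic.)

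The step that fails as described is extracting the Fraser--Horn property from the semantic condition by ``taking $\sigma$ to be a projection $\mathbf{A}_{1}\times\mathbf{A}_{2}\rightarrow\mathbf{A}_{i}$.'' In condition (2) of Theorem~\ref{conjuncion de atomicas para relaciones}, a member of $\mathbb{P}_{\mathrm{fin}}(\mathcal{K})$ carries the \emph{product} relation $R^{\mathbf{A}_{1}}\times R^{\mathbf{A}_{2}}$, not the principal-congruence relation of $\mathbf{A}_{1}\times\mathbf{A}_{2}$; so the hypothesis $(p_{1},\dots,p_{4})\in R^{\mathbf{A}_{1}}\times R^{\mathbf{A}_{2}}$ already says $(a_{i},b_{i})\in\theta^{\mathbf{A}_{i}}(c_{i},d_{i})$ for each $i$, and composing with a projection yields a tautology, not FHP. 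The instance that does work is $\sigma=\mathrm{id}$ from $(\mathbf{A}_{1},R^{\mathbf{A}_{1}})\times(\mathbf{A}_{2},R^{\mathbf{A}_{2}})$ into $(\mathbf{A}_{1}\times\mathbf{A}_{2},R^{\mathbf{A}_{1}\times\mathbf{A}_{2}})\in\mathcal{K}$, which gives the nontrivial inclusion $\theta^{\mathbf{A}_{1}}(c_{1},d_{1})\times\theta^{\mathbf{A}_{2}}(c_{2},d_{2})\subseteq\theta^{\mathbf{A}_{1}\times\mathbf{A}_{2}}(\vec{c},\vec{d})$ (the reverse inclusion being automatic). The paper avoids this altogether: for (1)$\Rightarrow$(2) it argues directly from the defining formula --- a conjunction of equations holds in a product iff it holds in each factor, so principal congruences decompose over finite products, which is FHP; and CEP follows from Proposition~\ref{aplicacion a dpc}. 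Either repair is short, but as written your projection argument proves nothing.
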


\begin{proof}
It is well known (see \cite{bu}) that:

\begin{enumerate}
\item[(i)] A variety has the Fraser-Horn property iff for all $n\in \mathbb{N%
}$, all $\mathbf{A}_{1},\dots ,\mathbf{A}_{n}\in \mathcal{V}$, and all $%
(a_{1},\ldots ,a_{n}),$ $(b_{1},\ldots ,b_{n})\in \mathbf{A}_{1}\times
\ldots \times \mathbf{A}_{n}$ we have%
\begin{equation*}
\theta ^{\mathbf{A}_{1}\times \ldots \times \mathbf{A}_{n}}((a_{1},\ldots
,a_{n}),(b_{1},\ldots ,b_{n}))=\theta ^{\mathbf{A}_{1}}(a_{1},b_{1})\times
\ldots \times \theta ^{\mathbf{A}_{n}}(a_{n},b_{n})\text{.}
\end{equation*}
\end{enumerate}

(1)$\Rightarrow $(2). By Proposition \ref{aplicacion a dpc} we have that $%
\mathcal{V}$ has the congruence extension property. Also we note that (i)
and (1) imply that $\mathcal{V}$ has the Fraser-Horn property.

(2)$\Rightarrow $(1). Let $\mathcal{L}$ be the language of $\mathcal{V}$. We
will use the following well known fact.

\begin{enumerate}
\item[(ii)] If $\sigma :\mathbf{A}\rightarrow \mathbf{B}$ is a homomorphism,
then $(a,b)\in \theta ^{\mathbf{A}}\left( c,d\right) $ implies $(\sigma
(a),\sigma (b))\in \theta ^{\mathbf{B}}\left( \sigma \left( c\right) ,\sigma
\left( d\right) \right) $.
\end{enumerate}

\noindent For each $\mathbf{A}\in \mathcal{V}$, let%
\begin{equation*}
R^{\mathbf{A}}=\{(a,b,c,d):(a,b)\in \theta ^{\mathbf{A}}\left( c,d\right) \}%
\text{,}
\end{equation*}%
and define%
\begin{equation*}
\mathcal{K}=\{\left( \mathbf{A},R^{\mathbf{A}}\right) :\mathbf{A}\in 
\mathcal{V}\}\text{.}
\end{equation*}%
Since $\mathcal{V}$ has definable principal congruences, $\mathcal{K}$ is a
first order class and hence $\mathbb{P}_{u}(\mathcal{K})\subseteq \mathcal{K}
$. Since $\mathcal{V}$ has the Fraser-Horn property, (i) says that $\mathbb{P%
}_{\mathrm{fin}}(\mathcal{K})\subseteq \mathcal{K}$. Thus, in order to prove
that (2) of Theorem \ref{conjuncion de atomicas para relaciones} holds we
need to check that:

\begin{enumerate}
\item[(iii)] For all $\mathbf{A},\mathbf{B}\in \mathcal{K}$, all $\mathbf{A}%
_{0}\leq \mathbf{A}_{\mathcal{L}}$, $\mathbf{B}_{0}\leq \mathbf{B}_{\mathcal{%
L}}$, all homomorphisms $\sigma :\mathbf{A}_{0}\rightarrow \mathbf{B}_{0}$
and all $a,b,c,d\in A_{0}$, we have that $(a,b,c,d)\in R^{\mathbf{A}}$
implies $(\sigma (a),\sigma (b),\sigma (c),\sigma (d))\in R^{\mathbf{B}}$.
\end{enumerate}

\noindent Or, equivalently

\begin{enumerate}
\item[(iv)] For all $\mathbf{A},\mathbf{B}\in \mathcal{Q}$, all $\mathbf{A}%
_{0}\leq \mathbf{A}_{\mathcal{L}}$, $\mathbf{B}_{0}\leq \mathbf{B}_{\mathcal{%
L}}$, all homomorphisms $\sigma :\mathbf{A}_{0}\rightarrow \mathbf{B}_{0}$
and all $a,b,c,d\in A_{0}$, we have that $(a,b)\in \theta _{\mathcal{Q}}^{%
\mathbf{A}}\left( c,d\right) $ implies $(\sigma (a),\sigma (b))\in \theta _{%
\mathcal{Q}}^{\mathbf{B}}(\sigma (c),\sigma (d))$.
\end{enumerate}

\noindent Since $\mathcal{V}$ has the congruence extension property we can
replace in (iv) the occurrence of "$(a,b)\in \theta ^{\mathbf{A}}\left(
c,d\right) $" by "$(a,b)\in \theta ^{\mathbf{A}_{0}}(c,d)$". Hence (iv)
follows from (ii).
\end{proof}

\begin{corollary}
\label{aplicacion a dpc 3}A locally finite variety with the congruence
extension property and the Fraser-Horn property has equationally definable
principal congruences.
\end{corollary}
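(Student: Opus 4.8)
The plan is to mimic the implication (2)$\Rightarrow$(1) of Proposition \ref{aplicacion a dpc 2}, replacing the appeal to Theorem \ref{conjuncion de atomicas para relaciones} by its locally finite counterpart, Corollary \ref{conjuncion de atomicas para relaciones caso localmente finito}. This is exactly the relationship that Corollary \ref{aplicacion a dpc 1} bears to the implication (3)$\Rightarrow$(2) of Proposition \ref{aplicacion a dpc}. The gain is that local finiteness lets us dispense with the assumption of \emph{definable} principal congruences that Proposition \ref{aplicacion a dpc 2} required, since it is precisely the hypothesis needed to drop the $\mathbb{P}_u$ operator.

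Concretely, let $\mathcal{L}$ be the language of $\mathcal{V}$ and, for each $\mathbf{A}\in\mathcal{V}$, set
\begin{equation*}
R^{\mathbf{A}}=\{(a,b,c,d):(a,b)\in\theta^{\mathbf{A}}(c,d)\}\text{,}
\end{equation*}
a $4$-ary relation on $A$. Because $\mathcal{V}$ is locally finite, I would apply Corollary \ref{conjuncion de atomicas para relaciones caso localmente finito} to the class $\mathcal{V}$ together with the assignment $\mathbf{A}\mapsto R^{\mathbf{A}}$; producing a formula in $\left[\bigwedge\mathrm{At}(\mathcal{L})\right]$ that defines $R$ in $\mathcal{V}$ is literally the assertion that $\mathcal{V}$ has equationally definable principal congruences. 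So it suffices to verify condition (2) of that corollary.

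To this end fix $k$, algebras $\mathbf{A}_1,\ldots,\mathbf{A}_k,\mathbf{A}\in\mathcal{V}$, a subalgebra $\mathbf{S}\leq\mathbf{A}_1\times\ldots\times\mathbf{A}_k$, a homomorphism $\sigma:\mathbf{S}\rightarrow\mathbf{A}$, and elements $p_1,p_2,p_3,p_4\in S$ with $(p_1,p_2,p_3,p_4)\in R^{\mathbf{A}_1}\times\ldots\times R^{\mathbf{A}_k}$. Reading this coordinatewise and using the Fraser--Horn characterization (the fact labelled (i) in the proof of Proposition \ref{aplicacion a dpc 2}), the membership is equivalent to $(p_1,p_2)\in\theta^{\mathbf{A}_1\times\ldots\times\mathbf{A}_k}(p_3,p_4)$. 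The congruence extension property then yields $\theta^{\mathbf{S}}(p_3,p_4)=\theta^{\mathbf{A}_1\times\ldots\times\mathbf{A}_k}(p_3,p_4)\cap S^2$, so that $(p_1,p_2)\in\theta^{\mathbf{S}}(p_3,p_4)$. Finally, pushing this forward along $\sigma$ via the homomorphism monotonicity fact (labelled (ii) in the same proof) gives $(\sigma(p_1),\sigma(p_2))\in\theta^{\mathbf{A}}(\sigma(p_3),\sigma(p_4))$, i.e. $(\sigma(p_1),\sigma(p_2),\sigma(p_3),\sigma(p_4))\in R^{\mathbf{A}}$, which is condition (2).

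All the steps are routine transcriptions; the one place I expect to do the real work is chaining the two structural hypotheses in the correct order. The Fraser--Horn property is what lets one recognize the product relation as a single principal congruence on the full product $\mathbf{A}_1\times\ldots\times\mathbf{A}_k$, whereas the congruence extension property is what allows that congruence to be restricted faithfully to the subalgebra $\mathbf{S}$ before $\sigma$ is applied. Local finiteness itself enters only through the hypothesis of Corollary \ref{conjuncion de atomicas para relaciones caso localmente finito}, and no prior assumption of definable principal congruences is needed.
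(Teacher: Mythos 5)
Your proof is correct, but it takes a different route from the paper's. The paper disposes of this corollary in one line: first invoke Corollary \ref{aplicacion a dpc 1} (local finiteness plus the congruence extension property already give \emph{definable} principal congruences, indeed by a formula in $\left[\bigvee\bigwedge\mathrm{At}(\mathcal{L})\right]$), and then feed that into Proposition \ref{aplicacion a dpc 2}, whose implication (2)$\Rightarrow$(1) upgrades the definition to a conjunction of equations using CEP and the Fraser--Horn property. You instead bypass the intermediate property of definable principal congruences entirely and verify condition (2) of Corollary \ref{conjuncion de atomicas para relaciones caso localmente finito} directly for the relation $R^{\mathbf{A}}=\{(a,b,c,d):(a,b)\in\theta^{\mathbf{A}}(c,d)\}$; your chain (Fraser--Horn to recognize $R^{\mathbf{A}_1}\times\ldots\times R^{\mathbf{A}_k}$ as membership in a single principal congruence of the product, CEP to restrict that congruence to $\mathbf{S}$, monotonicity under homomorphisms to push forward along $\sigma$) is exactly the computation the paper performs inside the proof of Proposition \ref{aplicacion a dpc 2}(2)$\Rightarrow$(1), only there the hypothesis of definable principal congruences is what guarantees closure under $\mathbb{P}_u$, whereas for you local finiteness removes $\mathbb{P}_u$ for free. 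The two approaches are equivalent in substance; yours is more self-contained and makes transparent why the DPC hypothesis of Proposition \ref{aplicacion a dpc 2} is not needed here, while the paper's is shorter because it reuses both earlier results as black boxes.
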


\begin{proof}
Combine Corollary \ref{aplicacion a dpc 1} with Proposition \ref{aplicacion
a dpc 2}.
\end{proof}

It is worth mentioning that in the terminology of \cite{bu}, a variety is
said to have equationally definable principal congruences if there is a
formula of the form $\exists \bigwedge p=q$ which defines the principal
congruences. Thus, Theorem 4 of \cite{bu} is not in contradiction with
Proposition \ref{aplicacion a dpc 2}, in fact it coincides with Corollary %
\ref{aplicacion a dpc 4} of the next section.

\subsection*{Two translation results}

As another application of Theorem \ref{conjuncion de atomicas para
relaciones}, we obtain a model theoretic proof of the following translation
result.

\begin{proposition}[{\protect\cite[Thm 2.3]{cz-dz}}]
\label{traduccion}Let $\mathcal{K}$ be a universal class of $\mathcal{L}$%
-algebras such that $\mathcal{K}\subseteq \mathcal{Q}_{RFSI}$, for some
relatively congruence distributive quasivariety $\mathcal{Q}$. Let $\varphi (%
\vec{x})\in \left[ \bigvee \bigwedge \mathrm{At}(\mathcal{L})\right] $. Then
there are $\mathcal{L}$-terms $p_{i},q_{i}$, $i=1,\ldots ,r$ such that%
\begin{equation*}
\mathcal{K}\vDash \varphi (\vec{x})\leftrightarrow \left(
\dbigwedge_{i=1}^{r}p_{i}(\vec{x})=q_{i}(\vec{x})\right) \text{.}
\end{equation*}
\end{proposition}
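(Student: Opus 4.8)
The plan is to reduce the disjunctive formula $\varphi$ to a single conjunction of equations by exploiting the semantic characterization from Theorem \ref{conjuncion de atomicas para relaciones}. The target conclusion—that $\varphi$ is equivalent over $\mathcal{K}$ to a single conjunction of equations—is exactly the statement that the relation $R^{\mathbf{A}} := \{\vec a : \mathbf{A} \vDash \varphi(\vec a)\}$ is definable in $\mathcal{K}$ by a formula in $\left[\bigwedge \mathrm{At}(\mathcal{L})\right]$. So my first step is to introduce this relation $R$ and set $\mathcal{K}_R = \{(\mathbf{A}, R^{\mathbf{A}}) : \mathbf{A} \in \mathcal{K}\}$, and then verify condition (2) of Theorem \ref{conjuncion de atomicas para relaciones} for $\mathcal{K}_R$.

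To verify that condition, I must understand the product and ultraproduct closures appearing in (2). Since $\varphi$ is an open (indeed $\left[\bigvee\bigwedge\mathrm{At}\right]$) formula, $R^{\mathbf{A}}$ is a first-order-definable relation, so $R$ is preserved under ultraproducts; this handles the $\mathbb{P}_u$ operators. The genuinely structural input is the hypothesis $\mathcal{K}\subseteq \mathcal{Q}_{RFSI}$ for a relatively congruence distributive quasivariety $\mathcal{Q}$. I would first observe that a $\left[\bigvee\bigwedge\mathrm{At}\right]$-formula is preserved by the relevant homomorphisms exactly when no nontrivial "splitting" across a product can occur—and this is where relative congruence distributivity enters. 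The key structural fact I expect to need is a Jónsson-style lemma for relatively congruence distributive quasivarieties: the relatively finitely subdirectly irreducible members of $\mathbb{P}_{\mathrm{fin}}(\mathcal{K})$ (or of $\mathbb{Q}(\mathcal{K})$) lie in $\mathbb{I}\mathbb{S}\mathbb{P}_u(\mathcal{K})$. Because $\mathcal{K}$ is already universal and contained in $\mathcal{Q}_{RFSI}$, the finitely subdirectly irreducible factors one encounters after taking products and subalgebras stay inside $\mathbb{S}(\mathcal{K})$, so a disjunct can be "read off" consistently.

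The heart of the argument—and the step I expect to be the main obstacle—is showing that condition (2) of Theorem \ref{conjuncion de atomicas para relaciones} holds, i.e., that whenever $\mathbf{A}\in \mathbb{P}_u\mathbb{P}_{\mathrm{fin}}(\mathcal{K})$, $\mathbf{B}\in \mathbb{P}_u(\mathcal{K})$, $\sigma:\mathbf{A}_0\to\mathbf{B}_0$ is a homomorphism between substructures, and $\vec a\in R^{\mathbf{A}}$, then $\sigma(\vec a)\in R^{\mathbf{B}}$. The subtlety is the asymmetry: $\mathbf{A}$ comes from the product-closure $\mathbb{P}_{\mathrm{fin}}$ while $\mathbf{B}$ does not. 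I would argue that $R$ is automatically preserved by homomorphisms into members of $\mathbb{P}_u(\mathcal{K})$ precisely because each such $\mathbf{B}$ (being an ultraproduct of relatively finitely subdirectly irreducible algebras) is itself relatively finitely subdirectly irreducible, so on $\mathbf{B}$ the disjunction $\varphi$ collapses: one of the disjuncts $\bigwedge_i p_i=q_i$ must hold globally rather than piecewise. The congruence-distributivity hypothesis is what forbids a disjunction $\alpha \vee \beta$ from holding on an RFSI algebra without one disjunct holding outright. This collapse lets me transfer satisfaction of $\varphi$ across $\sigma$ as if $\varphi$ were a conjunction of atomic formulas, establishing (2).

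Once (2) is verified, Theorem \ref{conjuncion de atomicas para relaciones} delivers a formula $\psi\in\left[\bigwedge\mathrm{At}(\mathcal{L})\right]$ defining $R$ in $\mathcal{K}$, which is exactly the desired conjunction $\bigwedge_{i=1}^{r} p_i(\vec x)=q_i(\vec x)$ equivalent to $\varphi$ over $\mathcal{K}$. The remaining routine work is bookkeeping: confirming the ultraproduct-preservation of the first-order relation $R$, checking that the homomorphism $\sigma$ in (2) does indeed extend/restrict as required, and translating between "$R$ is defined by a conjunction of atomics" and the displayed equivalence in the statement. I would expect the relative congruence distributivity to be invoked exactly once, at the collapse step, and to be indispensable there.
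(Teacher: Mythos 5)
Your overall strategy coincides with the paper's: encode $\varphi$ as a new relation symbol $R$, form $\mathcal{K}_R$, and verify condition (2) of Theorem \ref{conjuncion de atomicas para relaciones}. The gap is in the one step you yourself flag as the heart of the argument. First, note where the difficulty actually sits: for $\mathbf{A}\in \mathbb{P}_{\mathrm{fin}}(\mathcal{K}_R)$ the relation $R^{\mathbf{A}}$ is the \emph{product} relation, i.e.\ $\vec a\in R^{\mathbf{A}}$ means that at each coordinate $i$ \emph{some} disjunct of $\varphi$ holds at $\vec a(i)$, possibly a different disjunct for different $i$; this is strictly weaker than $\mathbf{A}\vDash \varphi(\vec a)$. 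The task is to show that a homomorphism $\sigma$ from a subalgebra of such a product into a member of $\mathcal{K}$ turns this coordinatewise disjunction into a genuine one. Your proposed mechanism --- ``congruence distributivity forbids a disjunction from holding on an RFSI algebra without one disjunct holding outright'' --- is false as stated: $x=0\vee x=1$ holds at every element of the two-element Boolean algebra (simple, in a congruence-distributive variety) while neither disjunct holds outright. It is also aimed at the wrong object: on $\mathbf{B}\in\mathbb{P}_u(\mathcal{K}_R)\subseteq\mathcal{K}_R$ there is nothing to collapse, since $R^{\mathbf{B}}$ is just the set of tuples satisfying $\varphi$.

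The missing argument is the following. Using universality of $\mathcal{K}_R$ and openness of $\varphi$, reduce to the case where $\mathbf{A}_0=\mathbf{A}_{\mathcal{L}}$ is a subdirect product of members of $\mathcal{K}$, $\mathbf{B}_0=\mathbf{B}_{\mathcal{L}}$ with $\mathbf{B}\in\mathcal{K}$, and $\sigma$ is onto. Then $\mathbf{A}_{\mathcal{L}}/\ker\sigma\cong\mathbf{B}_{\mathcal{L}}\in\mathcal{Q}_{RFSI}$, so $\ker\sigma$ is meet-irreducible in $\mathrm{Con}_{\mathcal{Q}}(\mathbf{A}_{\mathcal{L}})$, hence meet-\emph{prime} by relative congruence distributivity. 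For a tuple $\vec a\in R^{\mathbf{A}}$ and each disjunct $\pi_j$ of $\varphi$, set $\theta_j=\bigcap\{\ker\pi_i: \mathbf{A}_i\vDash\pi_j(\vec a(i))\}$; the coordinatewise hypothesis gives $\bigcap_j\theta_j=\Delta\subseteq\ker\sigma$, so meet-primeness yields some $\theta_j\subseteq\ker\sigma$, and that disjunct $\pi_j$ then holds at $\sigma(\vec a)$. This computation with $\ker\sigma$ is where distributivity is used, and it is exactly what your ``collapse'' assertion elides. (The paper also reduces first to the prototype formulas $x_1=y_1\vee\dots\vee x_n=y_n$ and substitutes terms afterwards, but that is a presentational convenience; your Jónsson-lemma detour, by contrast, is neither used nor needed, since $\mathcal{K}\subseteq\mathcal{Q}_{RFSI}$ together with universality already puts the codomain of $\sigma$ in $\mathcal{Q}_{RFSI}$.)
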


\begin{proof}
We start by proving the proposition for the formula%
\begin{equation*}
\varphi =\left( x_{1}=x_{2}\vee x_{3}=x_{4}\right) \text{.}
\end{equation*}%
Given $\mathbf{A}\in \mathcal{K}$, let%
\begin{equation*}
R^{\mathbf{A}}=\{\vec{a}\in A^{4}:\mathbf{A}\vDash \varphi (\vec{a}%
)\}=\{(a,b,c,d)\in A^{4}:a=b\text{ or }c=d\}\text{.}
\end{equation*}%
Define $\mathcal{K}^{\prime }=\{(\mathbf{A},R^{\mathbf{A}}):\mathbf{A}\in 
\mathcal{K}\}$. Note that $\mathcal{K}^{\prime }$ is universal. We aim to
apply (2) of Theorem \ref{conjuncion de atomicas para relaciones}, so we
need to show that:

\begin{itemize}
\item For all $\mathbf{A}\in \mathbb{P}_{u}\mathbb{P}_{\mathrm{fin}}(%
\mathcal{K}^{\prime })$, all $\mathbf{B}\in \mathbb{P}_{u}(\mathcal{K}%
^{\prime })$, all $\mathbf{A}_{0}\leq \mathbf{A}_{\mathcal{L}}$, $\mathbf{B}%
_{0}\leq \mathbf{B}_{\mathcal{L}}$, all homomorphisms $\sigma :\mathbf{A}%
_{0}\rightarrow \mathbf{B}_{0}$, and all $a,b,c,d\in A_{0}$, we have that $%
(a,b,c,d)\in R^{\mathbf{A}}$ implies $(\sigma (a),\sigma (b),\sigma
(c),\sigma (d))\in R^{\mathbf{B}}$.
\end{itemize}

Since $\mathcal{K}^{\prime }$ is universal, we can suppose that $\mathbf{B}%
\in \mathcal{K}^{\prime }$, $\mathbf{B}_{0}=\mathbf{B}_{\mathcal{L}}$ and $%
\sigma $ is onto. Also, as $\mathbb{P}_{u}\mathbb{P}_{\mathrm{fin}}(\mathcal{%
K}^{\prime })\subseteq \mathbb{ISPP}_{u}(\mathcal{K}^{\prime })\subseteq 
\mathbb{ISP}(\mathcal{K}^{\prime })$, we may assume that $\mathbf{A}\leq \Pi
\{\mathbf{A}_{i}:i\in I\}$ is a subdirect product with each $\mathbf{A}_{i}$
in $\mathcal{K}^{\prime }$, and that $\mathbf{A}_{0}=\mathbf{A}_{\mathcal{L}%
} $. Since%
\begin{equation*}
\mathbf{A}/\ker \sigma \simeq \mathbf{B}\in \mathcal{K}^{\prime }
\end{equation*}%
and $\mathcal{K\subseteq Q}_{RFSI}$, we have that $\ker \sigma $ is a meet
irreducible element of $\mathrm{Con}_{\mathcal{Q}}(\mathbf{A}_{\mathcal{L}})$%
. So, as $\mathrm{Con}_{\mathcal{Q}}(\mathbf{A}_{\mathcal{L}})$ is
distributive, we have that $\ker \sigma $ is a meet prime element of $%
\mathrm{Con}_{\mathcal{Q}}(\mathbf{A}_{\mathcal{L}})$. Let $(a,b,c,d)\in R^{%
\mathbf{A}}$. We prove that $(\sigma (a),\sigma (b),\sigma (c),\sigma
(d))\in R^{\mathbf{B}}$ (i.e., $\sigma (a)=\sigma (b)$ or $\sigma (c)=\sigma
(d)$). For $i\in I$ let $\pi _{i}:\mathbf{A}\rightarrow \mathbf{A}_{i}$ be
the canonical projection. Note that $\ker \pi _{i}\in \mathrm{Con}_{\mathcal{%
Q}}(\mathbf{A}_{\mathcal{L}})$ since $\mathbf{A}/\ker \pi _{i}\simeq \mathbf{%
A}_{i}\in \mathcal{K}^{\prime }$. From $(a,b,c,d)\in R^{\mathbf{A}}$, it
follows that either $a(i)=b(i)$ or $c(i)=d(i)$ for every $i\in I$. Thus%
\begin{equation*}
\bigcap_{(a,b)\in \ker \pi _{i}}\ker \pi _{i}\cap \bigcap_{(c,d)\in \ker \pi
_{i}}\ker \pi _{i}=\Delta ^{\mathbf{A}_{\mathcal{L}}}\subseteq \ker \sigma 
\text{.}
\end{equation*}%
Since $\ker \sigma $ is meet prime, this implies that either%
\begin{equation*}
\bigcap_{(a,b)\in \ker \pi _{i}}\ker \pi _{i}\subseteq \ker \sigma
\end{equation*}%
or%
\begin{equation*}
\bigcap_{(c,d)\in \ker \pi _{i}}\ker \pi _{i}\subseteq \ker \sigma \text{,}
\end{equation*}%
which implies $\sigma (a)=\sigma (b)$ or $\sigma (c)=\sigma (d)$. This
concludes the proof for the case $\varphi =\left( x_{1}=x_{2}\vee
x_{3}=x_{4}\right) $. The case in which $\varphi $ is the formula $%
x_{1}=y_{1}\vee x_{2}=y_{2}\vee \ldots \vee x_{n}=y_{n}$ can be proved in a
similar manner. Now, the general case follows easily.
\end{proof}

Strenghtening RFSI to RS allows for the translation of any open formula to a
conjunction of equations over $\mathcal{K}$. This is proved in \cite{ca-va0}
using topological arguments. Combining Proposition \ref{traduccion} and
Theorem \ref{(positive) open para relaciones} we get a very simple proof.

\begin{corollary}
\label{traduccion para simples}Let $\mathcal{K}$ be a universal class of $%
\mathcal{L}$-algebras such that $\mathcal{K}\subseteq \mathcal{Q}_{RS}$, for
some relatively congruence distributive quasivariety $\mathcal{Q}$. Let $%
\varphi (\vec{x})\in \mathrm{Op}(\mathcal{L})$. Then there are $\mathcal{L}$%
-terms $p_{i},q_{i}$, $i=1,\ldots ,r$ such that%
\begin{equation*}
\mathcal{K}\vDash \varphi (\vec{x})\leftrightarrow \left(
\dbigwedge_{i=1}^{r}p_{i}(\vec{x})=q_{i}(\vec{x})\right) \text{.}
\end{equation*}
\end{corollary}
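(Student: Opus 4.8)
The plan is to first show that $\varphi$ is equivalent over $\mathcal{K}$ to a \emph{positive} open formula, and then to invoke Proposition \ref{traduccion} to translate that positive formula into a conjunction of equations. This last step is legitimate because relatively simple algebras are relatively finitely subdirectly irreducible, so $\mathcal{K}\subseteq \mathcal{Q}_{RS}\subseteq \mathcal{Q}_{RFSI}$ and the hypotheses of Proposition \ref{traduccion} are met.

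To manufacture the positive open formula I would introduce a fresh $n$-ary relation symbol $R$, put $R^{\mathbf{A}}=\{\vec{a}\in A^{n}:\mathbf{A}\vDash \varphi (\vec{a})\}$ for each $\mathbf{A}\in \mathcal{K}$, and form $\mathcal{K}'=\{(\mathbf{A},R^{\mathbf{A}}):\mathbf{A}\in \mathcal{K}\}$. Since $\varphi$ is open, the sentence $\forall \vec{x}\,(R(\vec{x})\leftrightarrow \varphi (\vec{x}))$ is universal, so $\mathcal{K}'$ is again a universal class; hence every member of $\mathbb{P}_{u}(\mathcal{K}')$ has $\mathcal{L}$-reduct in $\mathcal{K}$ and still interprets $R$ by $\varphi$. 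I would then verify the homomorphism (i.e. positive) version of condition (2) of Theorem \ref{(positive) open para relaciones} for $\mathcal{K}'$, which by that theorem produces a formula $\psi \in \left[ \bigvee \bigwedge \mathrm{At}(\mathcal{L})\right]$ with $\mathcal{K}\vDash \varphi \leftrightarrow \psi$.

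The crux is this verification, and here the strengthening from RFSI to RS does all the work. Take $\mathbf{A},\mathbf{B}\in \mathbb{P}_{u}(\mathcal{K}')$, substructures $\mathbf{A}_{0}\leq \mathbf{A}_{\mathcal{L}}$, $\mathbf{B}_{0}\leq \mathbf{B}_{\mathcal{L}}$, a homomorphism $\sigma :\mathbf{A}_{0}\rightarrow \mathbf{B}_{0}$, and $\vec{a}\in A_{0}^{n}$ with $\vec{a}\in R^{\mathbf{A}}$. I claim $\sigma$ is automatically injective. Because $\mathcal{K}$ is universal and contained in $\mathcal{Q}_{RS}$, the reducts of $\mathbf{A}$ and $\mathbf{B}$ lie in $\mathcal{K}$ and every substructure of a member of $\mathcal{K}$ again lies in $\mathcal{K}$; thus $\mathbf{A}_{0},\mathbf{B}_{0}\in \mathcal{Q}_{RS}$, so both are relatively simple and, in particular, nontrivial. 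Now $\ker \sigma$ is a relative congruence of $\mathbf{A}_{0}$ (its quotient embeds into $\mathbf{B}_{0}\leq \mathbf{B}\in \mathcal{Q}$), so $\ker \sigma \in \{\Delta ,\nabla \}$; since $\mathbf{B}_{0}$ is nontrivial, $\ker \sigma =\Delta$ and $\sigma$ is an embedding. As open formulas are preserved by embeddings and are absolute between a structure and its substructures, $\mathbf{A}_{0}\vDash \varphi (\vec{a})$ yields $\mathbf{B}_{0}\vDash \varphi (\sigma (\vec{a}))$, hence $\sigma (\vec{a})\in R^{\mathbf{B}}$, which is exactly condition (2).

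With $\psi$ in hand, applying Proposition \ref{traduccion} to $\psi$ over $\mathcal{K}$ gives terms $p_{i},q_{i}$ with $\mathcal{K}\vDash \psi \leftrightarrow \bigwedge_{i=1}^{r}p_{i}=q_{i}$, and composing the two equivalences closes the argument. I expect the only delicate point to be the claim that every substructure in sight is relatively simple and nontrivial, that is, that universality together with $\mathcal{K}\subseteq \mathcal{Q}_{RS}$ rules out trivial subalgebras and collapsing homomorphisms; once that is secured, the reduction to the positive case and the appeal to Proposition \ref{traduccion} are routine.
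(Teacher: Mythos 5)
Your proof is correct and rests on the same two pillars as the paper's --- Theorem \ref{(positive) open para relaciones} plus Proposition \ref{traduccion}, glued together by the observation that in a universal class contained in $\mathcal{Q}_{RS}$ every homomorphism between subalgebras is forced to be an embedding --- but it packages them differently. The paper applies the definability theorem only to the single relation $x\neq y$, obtains a conjunction of equations $\varepsilon(x,y)$ equivalent to $x\neq y$, and then finishes by a syntactic translation: negated atoms are replaced by $\varepsilon$ and disjunctions of equations are collapsed to conjunctions by a second appeal to Proposition \ref{traduccion}. You instead apply the definability theorem directly to the relation $R^{\mathbf{A}}=\{\vec{a}:\mathbf{A}\vDash\varphi(\vec{a})\}$ cut out by the given open $\varphi$, get a positive open $\psi$ equivalent to $\varphi$ over $\mathcal{K}$ in one shot, and then invoke Proposition \ref{traduccion} once. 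Your route is more uniform and avoids the case-by-case rewriting; the paper's route isolates the reusable fact that $\neq$ is equationally definable over such a $\mathcal{K}$, which is of independent interest. One small imprecision: to rule out $\ker\sigma=\nabla^{\mathbf{A}_{0}}$ you write ``since $\mathbf{B}_{0}$ is nontrivial,'' but nontriviality of $\mathbf{B}_{0}$ does not by itself prevent $\sigma$ from collapsing $\mathbf{A}_{0}$ onto a one-element subalgebra; the correct witness is $\mathrm{Im}\,\sigma$ itself, which lies in $\mathcal{K}\subseteq\mathcal{Q}_{RS}$ by universality and is therefore nontrivial (this is exactly how the paper phrases it). Since your closing paragraph shows you know that universality plus $\mathcal{K}\subseteq\mathcal{Q}_{RS}$ excludes trivial subalgebras, this is a one-word repair rather than a gap.
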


\begin{proof}
We show first that there is $\delta (x,y)\in \left[ \bigvee \bigwedge 
\mathrm{At}(\mathcal{L})\right] $ such that $\mathcal{K}\vDash x\neq
y\leftrightarrow \delta (x,y)$. Given $\mathbf{A}\in \mathcal{K}$, let%
\begin{equation*}
D^{\mathbf{A}}=\{(a,b)\in A^{2}:a\neq b\}\text{,}
\end{equation*}%
and define $\mathcal{K}^{\prime }=\{(\mathbf{A},D^{\mathbf{A}}):\mathbf{A}%
\in \mathcal{K}\}$. Observe that $\mathcal{K}^{\prime }$ is universal. We
want to apply Theorem \ref{(positive) open para relaciones}, thus we need to
check that:

\begin{itemize}
\item For all $\mathbf{A},\mathbf{B}\in \mathbb{P}_{u}(\mathcal{K}^{\prime
}) $, all $\mathbf{A}_{0}\leq \mathbf{A}_{\mathcal{L}}$, $\mathbf{B}_{0}\leq 
\mathbf{B}_{\mathcal{L}}$, all homomorphisms $\sigma :\mathbf{A}%
_{0}\rightarrow \mathbf{B}_{0}$, and all $a,b\in A_{0}$, we have that $a\neq
b$ implies $\sigma (a)\neq \sigma (b)$.
\end{itemize}

\noindent But this is easy. Just note that both $\mathbf{A}_{0}$ and $\func{%
Im}\sigma $ are in $\mathcal{Q}_{RS}$ since $\mathcal{K}$ is universal. So $%
\ker \sigma \in \mathrm{Con}_{\mathcal{Q}}(\mathbf{A}_{0})=\{\Delta ^{%
\mathbf{A}_{0}},\nabla ^{\mathbf{A}_{0}}\}$, and $\ker \sigma \neq \nabla ^{%
\mathbf{A}_{0}}$ because $\func{Im}\sigma $ is simple and thus non-trivial.
It follows that $\sigma $ is one-one. By Theorem \ref{(positive) open para
relaciones} we have a formula in $\left[ \bigvee \bigwedge \mathrm{At}(%
\mathcal{L})\right] $ that defines $D$ in $\mathcal{K}$.

Now Proposition \ref{traduccion} produces $\varepsilon \left( x,y\right) \in %
\left[ \bigwedge \mathrm{At}(\mathcal{L})\right] $ such that%
\begin{equation*}
\mathcal{K}\vDash x\neq y\leftrightarrow \varepsilon (x,y)\text{.}
\end{equation*}%
This, in combination with the fact that disjunctions of equations are
equivalent to conjunctions in $\mathcal{K}$ (again by Proposition \ref%
{traduccion}), lets us translate any open formula to a conjunction of
equations over $\mathcal{K}$.
\end{proof}

The translation results above produce the following interesting corollaries
for a finite algebra.

\begin{corollary}
\label{algebraicas para A con traduccion}Suppose $\mathbf{A}$ is a finite $%
\mathcal{L}$-algebra such that $\mathbb{S}(\mathbf{A})\subseteq \mathcal{Q}%
_{RFSI}$ (resp. $\mathbb{S}(\mathbf{A})\subseteq \mathcal{Q}_{RS}$), for
some relatively congruence distributive quasivariety $\mathcal{Q}$. Let $%
f:A^{n}\rightarrow A$. The following are equivalent:

\begin{enumerate}
\item[(1)] There is a formula in $\left[ \bigwedge \mathrm{At}(\mathcal{L})%
\right] $ which defines $f$ in $\mathbf{A}$.

\item[(2)] For all $\mathbf{S}_{1},\mathbf{S}_{2}\leq \mathbf{A}$, all
homomorphisms (resp. isomorphisms) $\sigma :\mathbf{S}_{1}\rightarrow 
\mathbf{S}_{2}$, and all $a_{1},\ldots ,a_{n}\in S_{1}$ such that $f^{%
\mathbf{A}}(\vec{a})\in S_{1}$, we have $\sigma (f^{\mathbf{A}}(\vec{a}))=f^{%
\mathbf{A}}(\sigma (a_{1}),\ldots ,\sigma (a_{n}))$.
\end{enumerate}
\end{corollary}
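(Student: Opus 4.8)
The plan is to treat the two cases in parallel and reduce everything to the open-definability corollary of Section~2 together with the two translation results. Since $\mathbf{A}$ is finite, $\mathbb{V}(\mathbf{A})$ is locally finite, so $\{\mathbf{A}\}$ sits inside a locally finite variety and Corollary~\ref{open para localy finite} is available for the one-element class $\mathcal{K}=\{\mathbf{A}\}$. With $\mathcal{L}'=\mathcal{L}\cup\{f\}$, condition (2) of the present statement is, after renaming $\mathbf{A}_0=\mathbf{S}_1$ and $\mathbf{B}_0=\mathbf{S}_2$, literally condition (2) of Corollary~\ref{open para localy finite} for $\mathcal{K}=\{\mathbf{A}\}$; the ``resp.'' conventions line up as RFSI/homomorphisms with the positive-open case and RS/isomorphisms with the open case.

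For $(1)\Rightarrow(2)$ I would argue directly. If $\chi\in\left[\bigwedge\mathrm{At}(\mathcal{L})\right]$ defines $f$ in $\mathbf{A}$ and $\sigma:\mathbf{S}_1\to\mathbf{S}_2$ is a homomorphism (resp.\ isomorphism) with $\vec a,f^{\mathbf{A}}(\vec a)\in S_1$, then $\chi$ is quantifier free, so its truth on these elements is computed inside $\mathbf{S}_1$, and conjunctions of atomic formulas are preserved by $\sigma$; hence $\mathbf{S}_2\vDash\chi(\sigma(\vec a),\sigma(f^{\mathbf{A}}(\vec a)))$ and therefore $\mathbf{A}\vDash\chi(\sigma(\vec a),\sigma(f^{\mathbf{A}}(\vec a)))$. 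Because $\chi$ defines $f$, this forces $\sigma(f^{\mathbf{A}}(\vec a))=f^{\mathbf{A}}(\sigma(\vec a))$, which is (2). One may instead note $\left[\bigwedge\mathrm{At}(\mathcal{L})\right]\subseteq\mathrm{Op}(\mathcal{L})\cap\left[\bigvee\bigwedge\mathrm{At}(\mathcal{L})\right]$ and quote the easy implication of Corollary~\ref{open para localy finite}.

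For $(2)\Rightarrow(1)$ I would first feed condition (2) into Corollary~\ref{open para localy finite} to obtain a formula $\varphi(\vec x,z)$ defining $f$ in $\mathbf{A}$, with $\varphi\in\left[\bigvee\bigwedge\mathrm{At}(\mathcal{L})\right]$ in the RFSI case and $\varphi\in\mathrm{Op}(\mathcal{L})$ in the RS case. It then remains to convert $\varphi$ into a conjunction of equations, and this is where the translation results enter. I would apply them to the universal class $\mathbb{S}(\mathbf{A})$, which by hypothesis is contained in $\mathcal{Q}_{RFSI}$ (resp.\ $\mathcal{Q}_{RS}$): Proposition~\ref{traduccion} in the RFSI case and Corollary~\ref{traduccion para simples} in the RS case, applied to $\varphi$ regarded as a formula in the joint tuple $(\vec x,z)$, yield $\chi\in\left[\bigwedge\mathrm{At}(\mathcal{L})\right]$ with $\mathbb{S}(\mathbf{A})\vDash\varphi\leftrightarrow\chi$. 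Restricting to $\mathbf{A}\in\mathbb{S}(\mathbf{A})$ gives $\mathbf{A}\vDash(f(\vec x)=z)\leftrightarrow\chi$, so $\chi$ defines $f$ in $\mathbf{A}$, which is (1).

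The only genuine obstacle is licensing the translation step, since Proposition~\ref{traduccion} and Corollary~\ref{traduccion para simples} require a \emph{universal} class sitting inside $\mathcal{Q}_{RFSI}$ (resp.\ $\mathcal{Q}_{RS}$). I expect to verify that $\mathbb{S}(\mathbf{A})$ qualifies: it is closed under $\mathbb{I}$ and $\mathbb{S}$ and, being a finite set of finite algebras up to isomorphism, is closed under $\mathbb{P}_u$ as well, hence universal, while the containment in $\mathcal{Q}_{RFSI}$ (resp.\ $\mathcal{Q}_{RS}$) is exactly the hypothesis. Everything else is routine bookkeeping with the equivalences already established.
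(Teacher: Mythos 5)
Your argument is correct and follows essentially the same route as the paper: the forward direction by direct preservation of conjunctions of atomic formulas (the paper instead cites (1)$\Rightarrow$(2) of Theorem \ref{conjuncion de atomicas}), and the converse by feeding (2) into Corollary \ref{open para localy finite} and then translating the resulting (positive) open formula via Proposition \ref{traduccion} (resp.\ Corollary \ref{traduccion para simples}) over the universal class $\mathbb{IS}(\mathbf{A})$. Your extra check that this class is universal, and your matching of the ``resp.'' cases, are exactly right.
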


\begin{proof}
(1)$\Rightarrow $(2). Apply (1)$\Rightarrow $(2) of Theorem \ref{conjuncion
de atomicas}.

(2)$\Rightarrow $(1). By Corollary \ref{open para localy finite} there is a
formula $\varphi (\vec{x},z)\in \left[ \bigvee \bigwedge \mathrm{At}(%
\mathcal{L})\right] $ (resp. $\varphi (\vec{x},z)\in \mathrm{Op}(\mathcal{L}%
) $) which defines $f$ in $\mathbb{IS}(\mathbf{A})$. Now use Proposition \ref%
{traduccion} (resp. Corollary \ref{traduccion para simples}) to obtain a
conjunction of equations equivalent to $\varphi $ over $\mathbb{IS}(\mathbf{A%
})$.
\end{proof}

\begin{corollary}
Suppose $\mathbf{A}$ is a finite $\mathcal{L}$-algebra such that $\mathbb{S}(%
\mathbf{A})\subseteq \mathcal{Q}_{RFSI}$ (resp. $\mathbb{S}(\mathbf{A}%
)\subseteq \mathcal{Q}_{RS}$), for some relatively congruence distributive
quasivariety $\mathcal{Q}$. Let $R\subseteq A^{n}$. The following are
equivalent:

\begin{enumerate}
\item[(1)] There is a formula in $\left[ \bigwedge \mathrm{At}(\mathcal{L})%
\right] $ which defines $R$ in $\mathbf{A}$.

\item[(2)] For all $\mathbf{S}_{1},\mathbf{S}_{2}\leq \mathbf{A}$, all
homomorphisms (resp. isomorphisms) $\sigma :\mathbf{S}_{1}\rightarrow 
\mathbf{S}_{2}$, and all $a_{1},\ldots ,a_{n}\in S_{1}$, we have that $%
(a_{1},\ldots ,a_{n})\in R$ implies\newline
$(\sigma (a_{1}),\ldots ,\sigma (a_{n}))\in R$.
\end{enumerate}
\end{corollary}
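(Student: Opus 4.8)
The plan is to regard this statement as the relational twin of Corollary \ref{algebraicas para A con traduccion} and to reproduce that argument, with the functional locally finite corollary replaced by its relational counterpart, Corollary \ref{(positive) open locally finite para relaciones}. As there, the two translation results (Proposition \ref{traduccion} for the RFSI case and Corollary \ref{traduccion para simples} for the RS case) will convert the disjunctive/open formula produced by the locally finite corollary into a genuine conjunction of atomic formulas.

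For (1)$\Rightarrow$(2) nothing beyond routine preservation is needed. If $\varphi\in\left[\bigwedge\mathrm{At}(\mathcal{L})\right]$ defines $R$ in $\mathbf{A}$ and $(a_{1},\dots,a_{n})\in R$ with all $a_{i}\in S_{1}$, then $\mathbf{A}\vDash\varphi(\vec{a})$; since $\varphi$ is quantifier free and the $a_{i}$ lie in the $\mathcal{L}$-substructure $\mathbf{S}_{1}$, we get $\mathbf{S}_{1}\vDash\varphi(\vec{a})$, hence $\mathbf{S}_{2}\vDash\varphi(\sigma\vec{a})$ because conjunctions of atomic formulas are preserved by homomorphisms, and finally $\mathbf{A}\vDash\varphi(\sigma\vec{a})$ since $\mathbf{S}_{2}\leq\mathbf{A}$, i.e. $(\sigma(a_{1}),\dots,\sigma(a_{n}))\in R$. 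This is the argument of (1)$\Rightarrow$(2) in Theorem \ref{conjuncion de atomicas para relaciones} specialized to the single algebra $\mathbf{A}$, and because an isomorphism is in particular a homomorphism it covers the resp.\ version too.

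The content lies in (2)$\Rightarrow$(1). Since $\mathbf{A}$ is finite, $\mathbb{IS}(\mathbf{A})$ is contained in the locally finite variety $\mathbb{V}(\mathbf{A})$, so I would apply Corollary \ref{(positive) open locally finite para relaciones} to $\mathbb{IS}(\mathbf{A})$, equipping each $\mathbf{S}\leq\mathbf{A}$ with the induced relation $R\cap S^{n}$. The first step is to check that hypothesis (2) here is exactly condition (2) of that corollary for this class: a substructure of a substructure of $\mathbf{A}$ is again a substructure of $\mathbf{A}$, and on tuples from the smaller universe the induced relation agrees with $R$, so the two quantifier patterns coincide. Reading the matching column of the ``(resp.)'', the homomorphism hypothesis then yields a defining formula in $\left[\bigvee\bigwedge\mathrm{At}(\mathcal{L})\right]$ and the isomorphism hypothesis one in $\mathrm{Op}(\mathcal{L})$, each defining $R$ over $\mathbb{IS}(\mathbf{A})$. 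Finally, $\mathbb{IS}(\mathbf{A})$ is universal (up to isomorphism it is a finite set of finite algebras, hence closed under $\mathbb{I}$, $\mathbb{S}$ and ultraproducts) and is contained in $\mathcal{Q}_{RFSI}$ (resp. $\mathcal{Q}_{RS}$) by hypothesis, so Proposition \ref{traduccion} (resp. Corollary \ref{traduccion para simples}) translates the formula into an equivalent conjunction of equations over $\mathbb{IS}(\mathbf{A})$, and hence over $\mathbf{A}$.

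I do not expect a genuine difficulty here, only careful bookkeeping of the two ``(resp.)'' threads in parallel: the homomorphism hypothesis must be kept matched with relative finite subdirect irreducibility, with $\left[\bigvee\bigwedge\mathrm{At}(\mathcal{L})\right]$ and with Proposition \ref{traduccion}, while the isomorphism hypothesis is matched with relative simplicity, with $\mathrm{Op}(\mathcal{L})$ and with Corollary \ref{traduccion para simples}. The one point worth verifying explicitly is that the hypotheses demanded by the translation results---universality of the class and the relative congruence distributivity of the ambient $\mathcal{Q}$---are indeed inherited by $\mathbb{IS}(\mathbf{A})$, which they are, directly from the standing assumptions.
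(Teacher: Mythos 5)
Your proof is correct and follows exactly the route the paper intends: the corollary is the relational twin of Corollary \ref{algebraicas para A con traduccion}, and you reproduce its proof with Corollary \ref{(positive) open locally finite para relaciones} in place of Corollary \ref{open para localy finite}, followed by the translation via Proposition \ref{traduccion} (resp. Corollary \ref{traduccion para simples}). The bookkeeping of the two ``resp.''\ threads and the verification that $\mathbb{IS}(\mathbf{A})$ is universal and contained in $\mathcal{Q}_{RFSI}$ (resp. $\mathcal{Q}_{RS}$) are exactly the points that need checking, and you check them.
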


\section{Definability by existential formulas}

\begin{lemma}
\label{embedding a ultraproducto}Let $\mathbf{A},\mathbf{B}$ be $\mathcal{L}$%
-structures. Suppose for every sentence $\varphi \in \left[ \exists
\bigwedge \mathrm{At}(\mathcal{L})\right] $ (resp. $\varphi \in \left[
\exists \bigwedge \mathrm{\pm At}(\mathcal{L})\right] $) we have that $%
\mathbf{A}\vDash \varphi $ implies $\mathbf{B}\vDash \varphi $. Then there
is a homomorphism (resp. embedding) from $\mathbf{A}$ into an ultrapower of $%
\mathbf{B}$.
\end{lemma}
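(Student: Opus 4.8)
The plan is to build the desired homomorphism (resp. embedding) into an ultrapower of $\mathbf{B}$ via a standard compactness/diagram argument, realizing the elementary diagram of $\mathbf{A}$ (restricted to the appropriate fragment) inside a suitable ultrapower. First I would introduce a new constant symbol $c_a$ for each $a\in A$, expanding $\mathcal{L}$ to $\mathcal{L}_A = \mathcal{L}\cup\{c_a : a\in A\}$, and consider the positive atomic diagram (resp. the atomic diagram) of $\mathbf{A}$, namely
\begin{equation*}
\mathrm{D}^{+}(\mathbf{A}) = \{\alpha(c_{a_1},\ldots,c_{a_k}) : \alpha\in\mathrm{At}(\mathcal{L}),\ \mathbf{A}\vDash\alpha(a_1,\ldots,a_k)\}
\end{equation*}
(resp.\ $\mathrm{D}(\mathbf{A})$ using $\mathrm{\pm At}(\mathcal{L})$ instead). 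A homomorphism (resp.\ embedding) from $\mathbf{A}$ into an $\mathcal{L}$-structure $\mathbf{C}$ is exactly the same data as an expansion of $\mathbf{C}$ to $\mathcal{L}_A$ that models $\mathrm{D}^{+}(\mathbf{A})$ (resp.\ $\mathrm{D}(\mathbf{A})$), with $c_a$ interpreted as the image of $a$; this is the well-known diagram lemma.

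The key step is to show that $\mathbf{B}$ admits such an expansion after passing to an ultrapower, and for this I would invoke compactness together with the hypothesis. The target is the theory $\mathrm{Th}_{\exists^{+}}(\mathbf{B})\cup \mathrm{D}^{+}(\mathbf{A})$ in the language $\mathcal{L}_A$, where $\mathrm{Th}_{\exists^{+}}(\mathbf{B})$ denotes the set of all $\left[\exists\bigwedge\mathrm{At}(\mathcal{L})\right]$-sentences true in $\mathbf{B}$ (resp.\ the $\left[\exists\bigwedge\mathrm{\pm At}(\mathcal{L})\right]$-theory). To apply compactness I take a finite subset $\Phi$ of $\mathrm{D}^{+}(\mathbf{A})$; its conjunction mentions only finitely many constants $c_{a_1},\ldots,c_{a_k}$, so $\exists \vec{y}\, \bigwedge\Phi$ (existentially quantifying over variables replacing those constants) is a sentence in $\left[\exists\bigwedge\mathrm{At}(\mathcal{L})\right]$ which holds in $\mathbf{A}$ by construction. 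By the hypothesis of the lemma it then holds in $\mathbf{B}$, so $\mathbf{B}$ can interpret the finitely many constants to satisfy $\Phi$. Hence every finite subset of $\mathrm{D}^{+}(\mathbf{A})$ is satisfiable in an expansion of $\mathbf{B}$.

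From here I would finish by observing that finite satisfiability of $\mathrm{D}^{+}(\mathbf{A})$ in $\mathbf{B}$ yields, by a standard ultraproduct construction (the direction of {\L}o\'s's theorem giving models of finitely satisfiable diagrams), a single ultrapower $\mathbf{B}^{I}/\mathcal{U}$ in which \emph{all} of $\mathrm{D}^{+}(\mathbf{A})$ is realized: index the finite subsets of $\mathrm{D}^{+}(\mathbf{A})$ by $I$, pick for each $i\in I$ an interpretation of the constants in $\mathbf{B}$ satisfying the $i$-th finite piece, and let $\mathcal{U}$ be an ultrafilter on $I$ containing all the upward cones $\{i : i\supseteq \Phi_0\}$. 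Then the diagonal interpretation of each $c_a$ defines a map $\mathbf{A}\to \mathbf{B}^{I}/\mathcal{U}$ that preserves all atomic formulas (resp.\ all atomic and negated atomic formulas), i.e.\ a homomorphism (resp.\ an embedding). The main obstacle is bookkeeping: one must check that the \emph{same} ultrapower works simultaneously for every formula in the diagram, which is precisely why the filter is chosen to contain the cones $\{i : \Phi_0\subseteq i\}$ for each finite $\Phi_0$; this family has the finite intersection property, so such an ultrafilter exists. The embedding case is identical, with $\mathrm{\pm At}$ in place of $\mathrm{At}$ throughout, the negated atomic formulas guaranteeing injectivity and the reflection of atomic formulas.
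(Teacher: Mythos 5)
Your proposal is correct and follows essentially the same route as the paper: both form the positive atomic (resp. atomic) diagram of $\mathbf{A}$ over the language expanded by constants for elements of $A$, use the hypothesis to satisfy each finite piece in an expansion of $\mathbf{B}$, index the finite pieces, and take an ultrafilter containing the upward cones so that the resulting ultrapower realizes the whole diagram. The brief mention of the theory $\mathrm{Th}_{\exists^{+}}(\mathbf{B})$ is superfluous (you never need it, since the ultrapower of $\mathbf{B}$ automatically satisfies it), but this does not affect the argument.
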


\begin{proof}
Let $\mathcal{L}_{A}=\mathcal{L}\cup A$, where each element of $A$ is added
as a new constant symbol. Define%
\begin{equation*}
\Delta =\{\alpha \left( \vec{a}\right) :\alpha \in \lbrack \bigwedge \mathrm{%
At}(\mathcal{L})]\text{ and }\mathbf{A}\vDash \alpha (\vec{a})\}\text{,}
\end{equation*}%
i.e., $\Delta $ is the positive atomic diagram of $\mathbf{A}$. Let $I=%
\mathcal{P}_{\mathrm{fin}}\left( \Delta \right) $, and observe that for
every $i\in I$ there is an expansion $\mathbf{B}_{i}$ of $\mathbf{B}$ to $%
\mathcal{L}_{A}$ such that $\mathbf{B}_{i}\vDash i$. Now take an ultrafilter 
$u$ over $I$ such that for each $i\in I$ the set $\{j\in I:i\subseteq j\}$
is in $u$. Let $\mathbf{U}=\left. \prod \mathbf{B}_{i}\right/ u$, and notice
that $\mathbf{U}\vDash \Delta $. Thus $a\mapsto a^{\mathbf{U}}$ is an
homomorphism from $\mathbf{A}$ into $\mathbf{U}_{\mathcal{L}}=\mathbf{B}%
^{I}/u$.

The same proof works for the embedding version of the lemma by taking $%
\Delta =\{\alpha \left( \vec{a}\right) :\alpha \in \lbrack \mathrm{\pm }%
\bigwedge \mathrm{At}(\mathcal{L})]$ and $\mathbf{A}\vDash \alpha (\vec{a}%
)\} $.
\end{proof}

\begin{theorem}
\label{caso existencial para relaciones}Let $\mathcal{L}\subseteq \mathcal{L}%
^{\prime }$ be first order languages and let $R\in \mathcal{L}^{\prime }-%
\mathcal{L}$ be an $n$-ary relation symbol. Let $\mathcal{K}$ be any class
of $\mathcal{L}^{\prime }$-structures. Then

\begin{enumerate}
\item[(1)] The following are equivalent:

\begin{enumerate}
\item[(a)] There is a formula in $\left[ \exists \mathrm{Op}(\mathcal{L})%
\right] $ which defines $R$ in $\mathcal{K}$.

\item[(b)] For all $\mathbf{A},\mathbf{B}\in \mathbb{P}_{u}(\mathcal{K})$
and all embeddings $\sigma :\mathbf{A}_{\mathcal{L}}\rightarrow \mathbf{B}_{%
\mathcal{L}}$, we have that $\sigma :(\mathbf{A}_{\mathcal{L}},R^{\mathbf{A}%
})\rightarrow (\mathbf{B}_{\mathcal{L}},R^{\mathbf{B}})$ is a homomorphism.
\end{enumerate}

\item[(2)] The following are equivalent:

\begin{enumerate}
\item[(a)] There is a formula in $\left[ \exists \mathrm{OpHorn}(\mathcal{L})%
\right] $ which defines $R$ in $\mathcal{K}$.

\item[(b)] For all $\mathbf{A},\mathbf{B}\in \mathbb{P}_{u}\mathbb{P}_{%
\mathrm{fin}}(\mathcal{K})$ and all embeddings $\sigma :\mathbf{A}_{\mathcal{%
L}}\rightarrow \mathbf{B}_{\mathcal{L}}$, we have that $\sigma :(\mathbf{A}_{%
\mathcal{L}},R^{\mathbf{A}})\rightarrow (\mathbf{B}_{\mathcal{L}},R^{\mathbf{%
B}})$ is a homomorphism.
\end{enumerate}

\item[(3)] The following are equivalent:

\begin{enumerate}
\item[(a)] There is a formula in $\left[ \exists \bigvee \bigwedge \mathrm{At%
}(\mathcal{L})\right] $ which defines $R$ in $\mathcal{K}$.

\item[(b)] For all $\mathbf{A},\mathbf{B}\in \mathbb{P}_{u}(\mathcal{K})$
and all homomorphisms $\sigma :\mathbf{A}_{\mathcal{L}}\rightarrow \mathbf{B}%
_{\mathcal{L}}$, we have that $\sigma :(\mathbf{A}_{\mathcal{L}},R^{\mathbf{A%
}})\rightarrow (\mathbf{B}_{\mathcal{L}},R^{\mathbf{B}})$ is a homomorphism.
\end{enumerate}

\item[(4)] The following are equivalent:

\begin{enumerate}
\item[(a)] There is a formula in $\left[ \exists \bigwedge \mathrm{At}(%
\mathcal{L})\right] $ which defines $R$ in $\mathcal{K}$.

\item[(b)] For all $\mathbf{A},\mathbf{B}\in \mathbb{P}_{u}\mathbb{P}_{%
\mathrm{fin}}(\mathcal{K})$ and all homomorphisms $\sigma :\mathbf{A}_{%
\mathcal{L}}\rightarrow \mathbf{B}_{\mathcal{L}}$, we have that $\sigma :(%
\mathbf{A}_{\mathcal{L}},R^{\mathbf{A}})\rightarrow (\mathbf{B}_{\mathcal{L}%
},R^{\mathbf{B}})$ is a homomorphism.
\end{enumerate}
\end{enumerate}

\noindent Moreover, if modulo isomorphism, $\mathcal{K}$ is a finite class
of finite structures, then we can remove the operator $\mathbb{P}_{u}$ from
the (b) items.
\end{theorem}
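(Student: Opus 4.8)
The plan is to treat items (1) and (3) as the two base cases and then deduce (2) and (4) from them by a finite-product argument, mirroring exactly how Theorems \ref{open horn para relaciones} and \ref{conjuncion de atomicas para relaciones} were obtained from Theorem \ref{(positive) open para relaciones}. For the direction (a)$\Rightarrow$(b) in every case I would argue by preservation, just as in the (1)$\Rightarrow$(2) direction of the earlier theorems: a formula in $\left[\exists\mathrm{Op}(\mathcal L)\right]$ or $\left[\exists\mathrm{OpHorn}(\mathcal L)\right]$ is preserved by embeddings, while a formula in $\left[\exists\bigvee\bigwedge\mathrm{At}(\mathcal L)\right]$ or $\left[\exists\bigwedge\mathrm{At}(\mathcal L)\right]$ is preserved by homomorphisms. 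Since the defining equivalence is first order it passes to $\mathbb{P}_u(\mathcal K)$, and for the Horn and primitive-positive cases it is inherited by $\mathbb{P}_{\mathrm{fin}}(\mathcal K)$ just as in those two theorems, which is what accounts for the operator $\mathbb{P}_{\mathrm{fin}}$ in clause (b). Pushing an $R$-tuple of $\mathbf A$ through such a formula and along $\sigma$ then shows that $\sigma$ is a homomorphism of the expansions, i.e. (b).

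The substance is (b)$\Rightarrow$(a), where the new tool is Lemma \ref{embedding a ultraproducto}. For case (1) I would fix $\mathbf A\in\mathbb{P}_u(\mathcal K)$ and $\vec a\in R^{\mathbf A}$ and first produce a single $\psi_{\vec a}\in\left[\exists\mathrm{Op}(\mathcal L)\right]$ with $\mathbf A\vDash\psi_{\vec a}(\vec a)$ and $\mathbb{P}_u(\mathcal K)\vDash\psi_{\vec a}(\vec x)\rightarrow R(\vec x)$. If no such formula existed, the set of all existential open (equivalently, all $\left[\exists\bigwedge\mathrm{\pm At}(\mathcal L)\right]$) formulas true at $(\mathbf A,\vec a)$ would be jointly consistent with $\lnot R$ over $\mathbb{P}_u(\mathcal K)$; since ultraproducts of ultraproducts are ultraproducts, a single $\mathbf B\in\mathbb{P}_u(\mathcal K)$ carrying a tuple $\vec b\notin R^{\mathbf B}$ realizes them all, and Lemma \ref{embedding a ultraproducto} then gives an embedding of $\mathbf A_{\mathcal L}$ into an ultrapower $\mathbf B^{I}/u\in\mathbb{P}_u(\mathcal K)$ sending $\vec a$ to the diagonal image of $\vec b$, which remains outside $R$ by the ultraproduct computation, contradicting (b). A routine second compactness step replaces $R$ by a finite disjunction of such $\psi_{\vec a}$, again in $\left[\exists\mathrm{Op}(\mathcal L)\right]$. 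Case (3) is identical, using the homomorphism clause of Lemma \ref{embedding a ultraproducto} and $\left[\exists\bigwedge\mathrm{At}(\mathcal L)\right]$ in place of the embedding clause.

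For cases (2) and (4) I would apply the relevant base case to the class $\mathbb{P}_{\mathrm{fin}}(\mathcal K)$, which is legitimate because $\mathbb{P}_u\mathbb{P}_{\mathrm{fin}}(\mathcal K)=\mathbb{P}_u(\mathbb{P}_{\mathrm{fin}}(\mathcal K))$, so that hypothesis (b) is exactly the base-case hypothesis for $\mathbb{P}_{\mathrm{fin}}(\mathcal K)$. This yields a formula $\bigvee_j\exists\vec y\,\pi_j$ (resp. an existential open formula) defining $R$ in $\mathbb{P}_{\mathrm{fin}}(\mathcal K)$, and I would then run the finite-product collapsing argument of Theorem \ref{conjuncion de atomicas para relaciones} (resp. Theorem \ref{open horn para relaciones}) under the existential prefix: forming the product of putative counterexample algebras, using that $R$ on a product is the product relation and that atomic formulas pull back along the projections, one shows that a single disjunct $\exists\vec y\,\pi_{j}$ (resp. a single Horn choice of the matrix) already defines $R$ in $\mathcal K$. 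The moreover part follows because when $\mathcal K$ is, up to isomorphism, a finite class of finite structures, $\mathbb{V}(\mathcal K_{\mathcal L})$ is locally finite and every ultraproduct is isomorphic to a factor, so $\mathbb{P}_u$ may be dropped from the (b) clauses while the finiteness hypotheses of the cited theorems are still met.

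I expect the main obstacle to be the core of (b)$\Rightarrow$(a): manufacturing from the purely semantic hypothesis (b) an honest embedding (resp. homomorphism) into a genuine member of $\mathbb{P}_u(\mathcal K)$. This is precisely where Lemma \ref{embedding a ultraproducto} together with the absorption $\mathbb{P}_u\mathbb{P}_u=\mathbb{P}_u$ carries the argument, and where one must check carefully that the witness structure $\mathbf B$ and its ultrapower remain inside the class so that (b) is applicable to them. The secondary delicate point is the collapsing step for the Horn and primitive-positive cases, where the existential quantifiers must be shown to survive the projections onto the factors of the product.
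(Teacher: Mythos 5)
Your proposal follows essentially the same route as the paper: the preservation argument for (a)$\Rightarrow$(b), Lemma \ref{embedding a ultraproducto} together with $\mathbb{P}_{u}\mathbb{P}_{u}(\mathcal{K})\subseteq \mathbb{P}_{u}(\mathcal{K})$ and compactness for (b)$\Rightarrow$(a) in cases (1) and (3), and the reduction of (2) and (4) to the base cases applied to $\mathbb{P}_{\mathrm{fin}}(\mathcal{K})$ followed by the product-collapsing step. The only differences are cosmetic (you organize the compactness step contrapositively, and you note directly that hypothesis (b) of (2)/(4) is literally the base-case hypothesis for $\mathbb{P}_{\mathrm{fin}}(\mathcal{K})$, where the paper takes a small detour through $\mathbb{PP}_{u}(\mathcal{K})$), and you defer the same details (the Horn collapsing under the existential prefix, the moreover part) that the paper also leaves to the reader.
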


\begin{proof}
(1). (a)$\Rightarrow $(b). Note that if $\varphi \left( \vec{x}\right) \in %
\left[ \exists \mathrm{Op}(\mathcal{L})\right] $ defines $R$ in $\mathcal{K}$%
, then $\varphi $ defines $R$ in $\mathbb{P}_{u}(\mathcal{K})$ as well. Now
use that $\varphi $ is preserved by embeddings.

(b)$\Rightarrow $(a). For $\mathbf{A}\in \mathbb{P}_{u}(\mathcal{K})$ and $%
\vec{a}\in R^{\mathbf{A}}$, let%
\begin{equation*}
\Gamma ^{\vec{a},\mathbf{A}}=\{\alpha (\vec{x}):\alpha \in \left[ \exists
\dbigwedge \mathrm{\pm At}(\mathcal{L})\right] \text{ and }\mathbf{A}\vDash
\alpha (\vec{a})\}\text{.}
\end{equation*}%
Suppose $\mathbf{B}\in \mathbb{P}_{u}(\mathcal{K})$ and $\mathbf{B}\vDash
\Gamma ^{\vec{a},\mathbf{A}}(\vec{b})$. We claim that $\vec{b}\in R^{\mathbf{%
B}}$. Let $\mathcal{\tilde{L}}$ be the result of adding $n$ new constant
symbols to $\mathcal{L}$. Note that every sentence of $\left[ \exists
\dbigwedge \mathrm{\pm At}(\mathcal{\tilde{L}})\right] $ which holds in $(%
\mathbf{A}_{\mathcal{L}},\vec{a})$ holds in $(\mathbf{B}_{\mathcal{L}},\vec{b%
})$, which by Lemma \ref{embedding a ultraproducto} says that there is an
embedding from $(\mathbf{A}_{\mathcal{L}},\vec{a})$ into an ultrapower $(%
\mathbf{B}_{\mathcal{L}},\vec{b})^{I}/u$. Since $\mathbb{P}_{u}\mathbb{P}%
_{u}(\mathcal{K})\subseteq \mathbb{P}_{u}(\mathcal{K})$, (b) says that $%
\left( b_{1}/u,\dots ,b_{n}/u\right) \in R^{\mathbf{B}^{I}/u}$, which yields 
$\vec{b}\in R^{\mathbf{B}}$. So we have that%
\begin{equation*}
\mathbb{P}_{u}(\mathcal{K})\vDash \left( \dbigvee\nolimits_{\mathbf{A}\in 
\mathbb{P}_{u}(\mathcal{K})\text{, }\vec{a}\in R^{\mathbf{A}%
}}\dbigwedge\nolimits_{\alpha \in \Gamma ^{\vec{a},\mathbf{A}}}\alpha (\vec{x%
})\right) \leftrightarrow R(\vec{x})\text{,}
\end{equation*}%
and compactness produces the formula.

(2). (a)$\Rightarrow $(b). Observe that if $\varphi \left( \vec{x}\right)
\in \left[ \exists \mathrm{OpHorn}(\mathcal{L})\right] $ defines $R$ in $%
\mathcal{K}$, then $\varphi $ also defines $R$ in $\mathbb{P}_{u}\mathbb{P}_{%
\mathrm{fin}}(\mathcal{K})$. Next use that $\varphi $ is preserved by
embeddings.

(b)$\Rightarrow $(a). By (1) we have that there is $\varphi \in \left[
\exists \mathrm{Op}(\mathcal{L})\right] $ which defines $R$ in $\mathbb{P}_{%
\mathrm{fin}}(\mathcal{K})$. Now we can apply a similar argument to that
used in the proof of Theorem \ref{openHorn}, to extract from $\varphi $ a
formula of $\left[ \exists \mathrm{OpHorn}(\mathcal{L})\right] $ which
defines $R$ in $\mathcal{K}$.

(3). (b)$\Leftrightarrow $(a). This is similar to (b)$\Leftrightarrow $(a)
of (1).

(4). (a)$\Rightarrow $(b). Analogous to (a)$\Rightarrow $(b) of (2).

(b)$\Rightarrow $(a). Note that (b) holds also when $\mathbf{B}$ is in $%
\mathbb{PP}_{u}(\mathcal{K})$. Since $\mathbb{P}_{u}\mathbb{P}_{\mathrm{fin}%
}(\mathcal{K})\subseteq \mathbb{PP}_{u}(\mathcal{K})$, applying (b)$%
\Rightarrow $(a) of (3) to the class $\mathbb{P}_{\mathrm{fin}}(\mathcal{K})$
we have that there is a formula $\varphi \in \left[ \exists \bigvee
\bigwedge \mathrm{At}(\mathcal{L})\right] $ which defines $R$ in $\mathbb{P}%
_{\mathrm{fin}}(\mathcal{K})$. Now we can apply a similar argument to that
used in the proof of Theorem \ref{openHorn}, to extract from $\varphi $ a
formula of $\left[ \exists \bigwedge \mathrm{At}(\mathcal{L})\right] $ which
defines $R$ in $\mathcal{K}$.

The moreover part is left to the reader (see the proof of \ref{open horn
para relaciones}).
\end{proof}

We state without proof the functional version of the above theorem.

\begin{theorem}
\label{caso existencial}Let $\mathcal{L}\subseteq \mathcal{L}^{\prime }$ be
first order languages and let $f_{1},\ldots ,f_{m}\in \mathcal{L}^{\prime }-%
\mathcal{L}$ be $n$-ary function symbols. Let $\mathcal{K}$ be a class of $%
\mathcal{L}^{\prime }$-structures. Then we have:

\begin{enumerate}
\item[(1)] The following are equivalent:

\begin{enumerate}
\item[(a)] There is a formula in $\left[ \exists \mathrm{Op}(\mathcal{L})%
\right] $ which defines $\vec{f}$ in $\mathcal{K}$.

\item[(b)] For all $\mathbf{A},\mathbf{B}\in \mathbb{P}_{u}(\mathcal{K})$
and all embeddings $\sigma :\mathbf{A}_{\mathcal{L}}\rightarrow \mathbf{B}_{%
\mathcal{L}}$, we have that $\sigma :(\mathbf{A}_{\mathcal{L}},f_{1}^{%
\mathbf{A}},\ldots ,f_{m}^{\mathbf{A}})\rightarrow (\mathbf{B}_{\mathcal{L}%
},f_{1}^{\mathbf{B}},\ldots ,f_{m}^{\mathbf{B}})$ is an embedding.
\end{enumerate}

\item[(2)] The following are equivalent:

\begin{enumerate}
\item[(a)] There is a formula in $\left[ \exists \mathrm{OpHorn}(\mathcal{L})%
\right] $ which defines $\vec{f}$ in $\mathcal{K}$.

\item[(b)] For all $\mathbf{A},\mathbf{B}\in \mathbb{P}_{u}\mathbb{P}_{%
\mathrm{fin}}(\mathcal{K})$ and all embeddings $\sigma :\mathbf{A}_{\mathcal{%
L}}\rightarrow \mathbf{B}_{\mathcal{L}}$, we have that $\sigma :(\mathbf{A}_{%
\mathcal{L}},f_{1}^{\mathbf{A}},\ldots ,f_{m}^{\mathbf{A}})\rightarrow (%
\mathbf{B}_{\mathcal{L}},f_{1}^{\mathbf{B}},\ldots ,f_{m}^{\mathbf{B}})$ is
an embedding.
\end{enumerate}

\item[(3)] The following are equivalent:

\begin{enumerate}
\item[(a)] There is a formula in $\left[ \exists \bigvee \bigwedge \mathrm{At%
}(\mathcal{L})\right] $ which defines $\vec{f}$ in $\mathcal{K}$.

\item[(b)] For all $\mathbf{A},\mathbf{B}\in \mathbb{P}_{u}(\mathcal{K})$
and all homomorphisms $\sigma :\mathbf{A}_{\mathcal{L}}\rightarrow \mathbf{B}%
_{\mathcal{L}}$, we have that $\sigma :(\mathbf{A}_{\mathcal{L}},f_{1}^{%
\mathbf{A}},\ldots ,f_{m}^{\mathbf{A}})\rightarrow (\mathbf{B}_{\mathcal{L}%
},f_{1}^{\mathbf{B}},\ldots ,f_{m}^{\mathbf{B}})$ is a homomorphism.
\end{enumerate}

\item[(4)] The following are equivalent:

\begin{enumerate}
\item[(a)] There is a formula in $\left[ \exists \bigwedge \mathrm{At}(%
\mathcal{L})\right] $ which defines $\vec{f}$ in $\mathcal{K}$.

\item[(b)] For all $\mathbf{A},\mathbf{B}\in \mathbb{P}_{u}\mathbb{P}_{%
\mathrm{fin}}(\mathcal{K})$ and all homomorphisms $\sigma :\mathbf{A}_{%
\mathcal{L}}\rightarrow \mathbf{B}_{\mathcal{L}}$, we have that $\sigma :(%
\mathbf{A}_{\mathcal{L}},f_{1}^{\mathbf{A}},\ldots ,f_{m}^{\mathbf{A}%
})\rightarrow (\mathbf{B}_{\mathcal{L}},f_{1}^{\mathbf{B}},\ldots ,f_{m}^{%
\mathbf{B}})$ is a homomorphism.
\end{enumerate}
\end{enumerate}

\noindent Moreover, if modulo isomorphism $\mathcal{K}$ is a finite class of
finite structures, then we can remove the operator $\mathbb{P}_{u}$ from the
(b) items.
\end{theorem}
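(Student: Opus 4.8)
The plan is to reduce the functional statement to its relational counterpart, Theorem~\ref{caso existencial para relaciones}, in exactly the way Theorem~\ref{(positive) open} was obtained from Theorem~\ref{(positive) open para relaciones}. First I would introduce a fresh $(n+m)$-ary relation symbol $R\notin\mathcal{L}^{\prime}$ and, for each $\mathbf{A}\in\mathcal{K}$, interpret it as the graph of $\vec{f}^{\mathbf{A}}$, that is,
\[
R^{\mathbf{A}}=\{(a_{1},\ldots ,a_{n},f_{1}^{\mathbf{A}}(\vec{a}),\ldots ,f_{m}^{\mathbf{A}}(\vec{a})):a_{1},\ldots ,a_{n}\in A\}.
\]
Setting $\mathcal{K}_{R}=\{(\mathbf{A},R^{\mathbf{A}}):\mathbf{A}\in\mathcal{K}\}$ we have $\mathcal{K}_{R}\vDash R(\vec{x},\vec{z})\leftrightarrow \vec{f}(\vec{x})=\vec{z}$, so any $\mathcal{L}$-formula $\varphi (\vec{x},\vec{z})$ defines $R$ in $\mathcal{K}_{R}$ iff it defines $\vec{f}$ in $\mathcal{K}_{R}$, iff it defines $\vec{f}$ in $\mathcal{K}$ (the $\mathcal{L}^{\prime}$-reducts agree and $\varphi$ lives in $\mathcal{L}\subseteq\mathcal{L}^{\prime}$). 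Hence for each of the four items it suffices to run the corresponding item of Theorem~\ref{caso existencial para relaciones} on $\mathcal{K}_{R}$.

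Two facts make the reduction go through. First, the operators transport correctly: since $R(\vec{x},\vec{z})\leftrightarrow\bigwedge_{i}f_{i}(\vec{x})=z_{i}$ is a conjunction of Horn sentences it is preserved by direct products, and by \L o\'{s}'s theorem it is preserved by ultraproducts; combined with the product identities of Section~1 (so that the product of graphs is the graph of the product operation), this shows that every member of $\mathbb{P}_{u}(\mathcal{K}_{R})$ (resp.\ $\mathbb{P}_{u}\mathbb{P}_{\mathrm{fin}}(\mathcal{K}_{R})$) has the form $(\mathbf{D},R^{\mathbf{D}})$ with $\mathbf{D}\in\mathbb{P}_{u}(\mathcal{K})$ (resp.\ $\mathbb{P}_{u}\mathbb{P}_{\mathrm{fin}}(\mathcal{K})$) and $R^{\mathbf{D}}$ still the graph of $\vec{f}^{\mathbf{D}}$. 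Thus the structures ranged over in the relational (b) and the functional (b) are literally the same.

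Second, the two versions of condition (b) say the same thing. A map $\sigma$ is a homomorphism $(\mathbf{A}_{\mathcal{L}},R^{\mathbf{A}})\to(\mathbf{B}_{\mathcal{L}},R^{\mathbf{B}})$ precisely when it sends graph tuples to graph tuples, i.e.\ $\sigma (f_{i}^{\mathbf{A}}(\vec{a}))=f_{i}^{\mathbf{B}}(\sigma (a_{1}),\ldots ,\sigma (a_{n}))$ for all $\vec{a}$ and all $i$, which is exactly the assertion that $\sigma$ is a homomorphism $(\mathbf{A}_{\mathcal{L}},\vec{f}^{\mathbf{A}})\to(\mathbf{B}_{\mathcal{L}},\vec{f}^{\mathbf{B}})$. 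In items (3) and (4), where $\sigma$ is only required to be an $\mathcal{L}$-homomorphism, graph preservation and the homomorphism condition for the $f_{i}$ coincide verbatim. In items (1) and (2), where $\sigma$ is already an embedding of the $\mathcal{L}$-reducts, the expansion of $\sigma$ to the structures equipped with the $f_{i}$ is an embedding precisely when it commutes with the $f_{i}$ (no reflection clause is needed for function symbols), so graph preservation is again exactly the functional (b).

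Putting these together, applying item $(k)$ of Theorem~\ref{caso existencial para relaciones} to $\mathcal{K}_{R}$ yields, for each $k\in\{1,2,3,4\}$, a formula $\varphi (\vec{x},\vec{z})$ of the prescribed existential shape defining $R$ in $\mathcal{K}_{R}$, and by the first paragraph this same $\varphi$ defines $\vec{f}$ in $\mathcal{K}$. The moreover part is handled identically: if $\mathcal{K}$ is, up to isomorphism, a finite class of finite structures, then so is $\mathcal{K}_{R}$, so the moreover clause of Theorem~\ref{caso existencial para relaciones} removes $\mathbb{P}_{u}$ from each (b). The only step demanding genuine care — the closest thing to an obstacle — is the bookkeeping of the second paragraph, ensuring that $\mathbb{P}_{u}$ and $\mathbb{P}_{u}\mathbb{P}_{\mathrm{fin}}$ keep $R^{\mathbf{D}}$ equal to the graph of $\vec{f}^{\mathbf{D}}$ so that the quantifier ranges of the two theorems coincide; everything else is a direct transcription of the relational result.
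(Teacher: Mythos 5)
Your reduction is correct and is exactly the argument the paper intends: Theorem \ref{caso existencial} is stated without proof precisely because it follows from Theorem \ref{caso existencial para relaciones} via the same graph-relation device used to derive Theorem \ref{(positive) open} from Theorem \ref{(positive) open para relaciones}. The two points you single out for care --- that $\mathbb{P}_{u}$ and $\mathbb{P}_{u}\mathbb{P}_{\mathrm{fin}}$ carry $\mathcal{K}_{R}$ to graph-expansions of the corresponding closures of $\mathcal{K}$, and that preservation of the graph relation coincides with the embedding/homomorphism conditions in the (b) items --- are handled correctly.
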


For the case in which $\mathcal{K}=\{\mathbf{A}\}$, with $\mathbf{A}$
finite, (1) and (4) of Theorem \ref{caso existencial} are proved in \cite{kr}%
.

Using (b)$\Rightarrow $(a) of (4) in Theorem \ref{caso existencial para
relaciones} we can prove the following result of \cite{bu} (see the
paragraph below Corollary \ref{aplicacion a dpc 3}).

\begin{corollary}
\label{aplicacion a dpc 4}Let $\mathcal{V}$ be a variety with definable
principal congruences. Let $\mathcal{L}$ be the language of $\mathcal{V}$.
The following are equivalent:

\begin{enumerate}
\item[(1)] There is a formula in $\left[ \exists \bigwedge \mathrm{At}(%
\mathcal{L})\right] $ which defines the principal congruences in $\mathcal{V}
$.

\item[(2)] $\mathcal{V}$ has the Fraser-Horn property.
\end{enumerate}
\end{corollary}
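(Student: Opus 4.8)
The plan is to mirror Proposition \ref{aplicacion a dpc 2}, replacing the conjunction-of-atomics characterization by the primitive positive one, Theorem \ref{caso existencial para relaciones}(4). As there, set $R^{\mathbf{A}}=\{(a,b,c,d):(a,b)\in\theta^{\mathbf{A}}(c,d)\}$ for $\mathbf{A}\in\mathcal{V}$ and $\mathcal{K}=\{(\mathbf{A},R^{\mathbf{A}}):\mathbf{A}\in\mathcal{V}\}$. Since $\mathcal{V}$ has definable principal congruences, $\mathcal{K}$ is a first order class, so $\mathbb{P}_u(\mathcal{K})\subseteq\mathcal{K}$. I will also use the two standard facts recalled in the proof of Proposition \ref{aplicacion a dpc 2}: that $\mathcal{L}$-homomorphisms preserve principal congruences, and that $\mathcal{V}$ has the Fraser--Horn property iff $\theta^{\mathbf{A}_1\times\cdots\times\mathbf{A}_k}(\vec c,\vec d)=\theta^{\mathbf{A}_1}(c_1,d_1)\times\cdots\times\theta^{\mathbf{A}_k}(c_k,d_k)$ for every finite family.

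For (2)$\Rightarrow$(1) I will verify clause (b) of Theorem \ref{caso existencial para relaciones}(4) and read off the formula. The crucial point is that, by the Fraser--Horn identity just recalled together with the Section~1 conventions on products of relations, $R^{\mathbf{A}_1}\times\cdots\times R^{\mathbf{A}_k}=R^{\mathbf{A}_1\times\cdots\times\mathbf{A}_k}$; hence $\mathbb{P}_{\mathrm{fin}}(\mathcal{K})\subseteq\mathcal{K}$, and combined with $\mathbb{P}_u(\mathcal{K})\subseteq\mathcal{K}$ this gives $\mathbb{P}_u\mathbb{P}_{\mathrm{fin}}(\mathcal{K})\subseteq\mathcal{K}$. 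Thus any $\mathbf{A},\mathbf{B}$ arising in (b) lie in $\mathcal{K}$, so $R^{\mathbf{A}},R^{\mathbf{B}}$ are the genuine principal-congruence relations, and any homomorphism $\sigma:\mathbf{A}_{\mathcal{L}}\to\mathbf{B}_{\mathcal{L}}$ preserves $R$ because homomorphisms preserve principal congruences. Theorem \ref{caso existencial para relaciones}(4) then supplies $\varphi\in[\exists\bigwedge\mathrm{At}(\mathcal{L})]$ defining $R$, i.e. defining the principal congruences of $\mathcal{V}$.

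For (1)$\Rightarrow$(2) I argue directly. The inclusion $\theta^{\prod_i\mathbf{A}_i}(\vec c,\vec d)\subseteq\prod_i\theta^{\mathbf{A}_i}(c_i,d_i)$ always holds, since each projection is a homomorphism. For the reverse, let $\varphi$ be the defining primitive positive formula and suppose $(a_i,b_i)\in\theta^{\mathbf{A}_i}(c_i,d_i)$, that is $\mathbf{A}_i\vDash\varphi(a_i,b_i,c_i,d_i)$, for every $i$. Formulas in $[\exists\bigwedge\mathrm{At}(\mathcal{L})]$ are preserved by direct products (take the existential witnesses coordinatewise), so $\prod_i\mathbf{A}_i\vDash\varphi(\vec a,\vec b,\vec c,\vec d)$, i.e. $(\vec a,\vec b)\in\theta^{\prod_i\mathbf{A}_i}(\vec c,\vec d)$. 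This establishes the Fraser--Horn identity, hence (2).

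The only delicate bookkeeping is in (2)$\Rightarrow$(1): aligning the paper's arity-$4$ product-of-relations convention with the Fraser--Horn identity so that $\mathbb{P}_{\mathrm{fin}}(\mathcal{K})\subseteq\mathcal{K}$ genuinely holds, and noting that definable principal congruences is exactly what makes $\mathcal{K}$ first order and thus closed under $\mathbb{P}_u$. Beyond this, both directions rest on the standard preservation behavior of primitive positive formulas under products and homomorphisms, so I expect no real obstacle.
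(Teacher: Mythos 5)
Your proof is correct and follows exactly the route the paper intends: the paper gives no detailed argument but explicitly says the corollary is obtained from (b)$\Rightarrow$(a) of (4) in Theorem \ref{caso existencial para relaciones}, and your verification of clause (b) — using definability of principal congruences for closure under $\mathbb{P}_u$, the Fraser--Horn identity for closure under $\mathbb{P}_{\mathrm{fin}}$, and preservation of principal congruences by homomorphisms — is the same bookkeeping as in the proof of Proposition \ref{aplicacion a dpc 2}. The converse direction via preservation of primitive positive formulas under products and of principal congruences under the projections is likewise the standard argument.
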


\subsection*{Primitive positive functions}

Functions definable in a finite algebra $\mathbf{A}$ by a formula of the
form $\exists \dbigwedge p=q$ are called \emph{primitive positive functions}
and they have been extensively studied. For the case in which $\mathcal{K}=\{%
\mathbf{A}\}$ for some finite algebra $\mathbf{A}$, (4) of Theorem \ref{caso
existencial} is a well known result \cite{ge}. The translation results of
Section 4 produce the following.

\begin{proposition}
\label{primitive positives para A con traduccion}Suppose $\mathbf{A}$ is a
finite $\mathcal{L}$-algebra such that $\mathbb{S}(\mathbf{A})\subseteq
Q_{RFSI}$ (resp. $\mathbb{S}(\mathbf{A})\subseteq Q_{RS}$), for some
relatively congruence distributive quasivariety $Q$. Let $f:A^{n}\rightarrow
A$. The following are equivalent:

\begin{enumerate}
\item[(1)] $f$ is primitive positive.

\item[(2)] If $\sigma :\mathbf{A}\rightarrow \mathbf{A}$ is a homomorphism
(resp. isomorphism), then $\sigma :(\mathbf{A},f)\rightarrow (\mathbf{A},f)$
is a homomorphism (resp. isomorphism).
\end{enumerate}
\end{proposition}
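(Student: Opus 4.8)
The plan is to mirror the proof of Corollary~\ref{algebraicas para A con traduccion}: use the existential definability criterion (Theorem~\ref{caso existencial}) to produce a definition of $f$ by an existential formula whose matrix has a convenient shape, and then invoke the Section~4 translation results to rewrite that matrix as a conjunction of equations, turning the whole formula into a primitive positive one. The implication (1)$\Rightarrow$(2) is immediate from preservation: a primitive positive formula is preserved by homomorphisms, so if $\varphi\in\left[\exists\bigwedge\mathrm{At}(\mathcal{L})\right]$ defines the graph of $f$ in $\mathbf{A}$ and $\sigma:\mathbf{A}\rightarrow\mathbf{A}$ is a homomorphism, then $\sigma$ carries the graph of $f$ into itself, i.e. $\sigma$ is a homomorphism of $(\mathbf{A},f)$; when $\sigma$ is an isomorphism, applying the same remark to $\sigma^{-1}$ upgrades this to an isomorphism of $(\mathbf{A},f)$.

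For (2)$\Rightarrow$(1) in the RFSI case I would put $\mathcal{K}=\{(\mathbf{A},f)\}$ and note that, since $\mathbf{A}$ is finite, the moreover clause of Theorem~\ref{caso existencial} permits dropping $\mathbb{P}_{u}$ from part~(3)(b); for the one-element class $\{\mathbf{A}\}$ that clause quantifies only over homomorphisms $\sigma:\mathbf{A}\rightarrow\mathbf{A}$ and reads exactly as condition~(2) (homomorphism version). Hence (3)(b) holds and (3)(a) yields $\varphi=\exists\vec{y}\,\psi(\vec{x},z,\vec{y})$ defining $f$ in $\mathbf{A}$ with $\psi\in\left[\bigvee\bigwedge\mathrm{At}(\mathcal{L})\right]$. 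Now $\mathbb{IS}(\mathbf{A})$ is a universal class (finitely many isomorphism types, each finite, so closed under $\mathbb{P}_{u}$) contained in $\mathcal{Q}_{RFSI}$, so Proposition~\ref{traduccion} applied to $\psi$ furnishes terms $p_{i},q_{i}$ with $\mathbb{IS}(\mathbf{A})\vDash\psi\leftrightarrow\bigwedge_{i}p_{i}=q_{i}$; in particular this equivalence holds in $\mathbf{A}$, whence $\exists\vec{y}\,\bigwedge_{i}p_{i}=q_{i}$ is a primitive positive definition of $f$ in $\mathbf{A}$.

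The RS case is parallel but uses part~(1) of Theorem~\ref{caso existencial} together with Corollary~\ref{traduccion para simples}. I expect the delicate point here to be matching the hypotheses: part~(1)(b) quantifies over \emph{embeddings} $\mathbf{A}\rightarrow\mathbf{A}$, but since $\mathbf{A}$ is finite every injective endomorphism is surjective, so these embeddings are exactly the automorphisms of $\mathbf{A}$ --- precisely the maps appearing in condition~(2) (isomorphism version). Thus the moreover form of (1)(b) coincides with (2), producing $\varphi=\exists\vec{y}\,\psi$ with $\psi\in\mathrm{Op}(\mathcal{L})$ defining $f$ in $\mathbf{A}$; as $\mathbb{IS}(\mathbf{A})\subseteq\mathcal{Q}_{RS}$ is universal, Corollary~\ref{traduccion para simples} rewrites $\psi$ as a conjunction of equations over $\mathbf{A}$, again making $\varphi$ primitive positive. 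The main obstacles throughout are exactly these hypothesis-matchings: removing $\mathbb{P}_{u}$ via finiteness, reading the one-element-class forms of the (b) conditions as statements about endomorphisms (resp. automorphisms) of $\mathbf{A}$, and verifying that $\mathbb{IS}(\mathbf{A})$ genuinely satisfies the universality and relative simplicity/finite-subdirect-irreducibility requirements demanded by the translation results.
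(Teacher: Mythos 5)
Your proposal is correct and follows essentially the same route as the paper: the paper's proof of (2)$\Rightarrow$(1) likewise invokes part (3) (resp.\ (1)) of Theorem~\ref{caso existencial} to get an existential definition with matrix in $\left[\bigvee\bigwedge\mathrm{At}(\mathcal{L})\right]$ (resp.\ $\mathrm{Op}(\mathcal{L})$) and then applies Proposition~\ref{traduccion} (resp.\ Corollary~\ref{traduccion para simples}) over the universal class $\mathbb{IS}(\mathbf{A})$ to turn the matrix into a conjunction of equations. Your added details (the preservation argument for (1)$\Rightarrow$(2), and the observation that for a finite $\mathbf{A}$ embeddings $\mathbf{A}\rightarrow\mathbf{A}$ are exactly the automorphisms) are correct fillings-in of steps the paper leaves implicit.
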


\begin{proof}
(2)$\Rightarrow $(1) By (3) (resp. (1)) of Theorem \ref{caso existencial} we
have that there is a formula $\exists \vec{u}\ \psi (\vec{u},\vec{x},z)$,
with $\psi \in \left[ \bigvee \bigwedge \mathrm{At}(\mathcal{L})\right] $
(resp. $\left[ \mathrm{Op}(\mathcal{L})\right] $), which defines $f$ in $%
\mathbf{A}$. Since $\mathbb{IS}(\mathbf{A})$ is a universal class,
Proposition \ref{traduccion} (resp. Corollary \ref{traduccion para simples})
says that there is a $\varphi \in \left[ \bigwedge \mathrm{At}(\mathcal{L})%
\right] $ equivalent with $\psi $ over $\mathbb{IS}(\mathbf{A})$. Clearly $%
\exists \vec{u}\ \varphi (\vec{u},\vec{x},z)$ defines $f$ in $\mathbf{A}$.
\end{proof}

\subsubsection{Primitive positive functions in Stone algebras}

As an application of the results in Section \ref{Sec: definibilidad por
atomicas} and the current section we characterize primitive positive
functions and functions definable by formulas of the form $\dbigwedge p=q$
in Stone algebras. If $\mathbf{L}$ is a Stone algebra, let $\rightarrow ^{%
\mathbf{L}}$ denote its Heyting implication, when it does exist. A \emph{%
three valued Heyting algebra} is a Heyting algebra belonging to the variety
generated by the three element Heyting algebra.

\begin{proposition}
\label{allgebraicas de stone algebras copy(1)}Let $\mathbf{L}=(L,\vee
,\wedge ,^{\ast },0,1)$ be a Stone algebra and let $f:L^{n}\rightarrow L$ be
any function. Let $\mathcal{L}=\{\vee ,\wedge ,^{\ast },0,1\}$. Then:

\begin{enumerate}
\item[(1)] If the Heyting implication exists in $\mathbf{L}$, and $(L,\vee
,\wedge ,\rightarrow ^{\mathbf{L}},0,1)$ is a three valued Heyting algebra,
then the following are equivalent:

\begin{enumerate}
\item[(a)] There is a formula in $\left[ \bigwedge \mathrm{At}(\mathcal{L})%
\right] $ which defines $f$ in $\mathbf{L}$.

\item[(b)] There is a formula in $\left[ \exists \bigwedge \mathrm{At}(%
\mathcal{L})\right] $ which defines $f$ in $\mathbf{L}$.

\item[(c)] There is a term of the language $\{\vee ,\wedge ,\rightarrow
,^{\ast },0,1\}$ which represents $f$ in $\mathbf{L}$.
\end{enumerate}

\item[(2)] If $\mathbf{L}$ does not satisfy the hypothesis of (1), then the
following are equivalent:

\begin{enumerate}
\item[(a)] There is a formula in $\left[ \bigwedge \mathrm{At}(\mathcal{L})%
\right] $ which defines $f$ in $\mathbf{L}$.

\item[(b)] There is a formula in $\left[ \exists \bigwedge \mathrm{At}(%
\mathcal{L})\right] $ which defines $f$ in $\mathbf{L}$.

\item[(c)] There is an $\mathcal{L}$-term which represents $f$ in $\mathbf{L}
$.
\end{enumerate}
\end{enumerate}
\end{proposition}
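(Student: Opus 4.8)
Because the variety of Stone algebras is locally finite (it is generated by the finite algebra $\mathbf{3}$), I will run the argument inside $\mathcal{K}=\{\mathbf{L}\}$ and use the locally finite characterizations established earlier. By the moreover part of Theorem \ref{conjuncion de atomicas}, condition (a) is equivalent to the structural statement $(\ast)$: every Stone homomorphism $\sigma\colon\mathbf{S}\to\mathbf{L}$ with $\mathbf{S}\le\mathbf{L}^{k}$ satisfies $\sigma((f\times\cdots\times f)(\vec{p}))=f(\sigma\vec{p})$ whenever $\vec{p}$ and $(f\times\cdots\times f)(\vec{p})$ lie in $S$; by Theorem \ref{caso existencial}(4), condition (b) is equivalent to the formally weaker statement $(\ast\ast)$ in which only total homomorphisms between the $\mathcal{L}$-reducts of members of $\mathbb{P}_{u}\mathbb{P}_{\mathrm{fin}}(\mathbf{L})$ are tested. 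Thus (a)$\Rightarrow$(b) and $(\ast)\Rightarrow(\ast\ast)$ are immediate, and the plan is to close the circle by proving that term-representability is equivalent to both: I will show (c)$\Rightarrow(\ast)$ and $(\ast\ast)\Rightarrow$(c), which yields (a)$\Leftrightarrow$(b)$\Leftrightarrow$(c) at one stroke and, as a by-product, the existential elimination (b)$\Rightarrow$(a).

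The step (c)$\Rightarrow(\ast)$ is the easy half, and it is where the case distinction is born. The operations $\vee,\wedge,{}^{\ast},0,1$ are preserved by every Stone homomorphism, so any $\mathcal{L}$-term function satisfies $(\ast)$, settling case (2). In case (1) it remains to see that $\to$ also satisfies $(\ast)$; since every Stone algebra is a subdirect product of copies of $\mathbf{2}$ and $\mathbf{3}$, and the only nontrivial homomorphism among subalgebras of $\mathbf{3}$ is the retraction ${}^{\ast\ast}\colon\mathbf{3}\to\mathbf{2}$, this reduces to the finite verification that ${}^{\ast\ast}$ preserves $\to$ on $\mathbf{3}$. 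That computation succeeds exactly when $\mathbf{L}$ is a three valued Heyting algebra: if instead $\mathbf{L}$ carries a Heyting implication but is not three valued, it contains the four element chain $C_{4}=\{0<a<b<1\}$, whose Stone endomorphism $\sigma$ with $\sigma(a)=\sigma(b)=a$ has $\sigma(b\to a)=a$ yet $\sigma(b)\to\sigma(a)=1$, so $\to$ fails $(\ast)$; and if no Heyting implication exists there is nothing to add. Hence in case (1) every Heyting term function satisfies $(\ast)$, while in case (2) only $\mathcal{L}$-terms do.

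The step $(\ast\ast)\Rightarrow$(c) is the substantial one. I first treat the generators $\mathbf{2},\mathbf{3}$ of the variety: applied to $\mathbf{A}=\mathbf{3}$ (for which $\mathbb{S}(\mathbf{3})=\{\mathbf{2},\mathbf{3}\}\subseteq\mathcal{Q}_{RFSI}$, with $\mathcal{Q}$ the congruence distributive variety of Stone algebras), Corollary \ref{algebraicas para A con traduccion} and Proposition \ref{primitive positives para A con traduccion} identify the functions satisfying $(\ast)$ and $(\ast\ast)$ on $\mathbf{3}$ with those commuting with ${}^{\ast\ast}$, and a finite inspection of the clone of $\mathbf{3}$ shows these are precisely its term functions in the respective language. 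To lift this coincidence to the possibly infinite $\mathbf{L}$ I use that the distributive lattice median $m(x,y,z)=(x\wedge y)\vee(y\wedge z)\vee(z\wedge x)$ is a majority term of every Stone algebra: a Baker--Pixley style interpolation reduces representability of an $f$ satisfying $(\ast\ast)$ to its behaviour on one and two generated subalgebras, which by local finiteness range over only finitely many finite Stone algebras (subdirect products of $\mathbf{2}$ and $\mathbf{3}$), where the previous step supplies a term. The majority term then glues these local terms into a single global term of the appropriate language.

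The main obstacle is precisely this last extraction $(\ast\ast)\Rightarrow$(c). The condition $(\ast\ast)$ only records preservation of graphs of homomorphisms between reducts of powers, which is a priori weaker than preserving all compatible relations, so term-representability is not formal; it is rescued by the majority term $m$, which makes the clone two-determined, together with the rigidity of the finite blocks $\mathbf{2},\mathbf{3}$. The two delicate points to control are the uniformity of the local terms across the finitely many two generated subalgebra types (including the ultraproducts present in $(\ast\ast)$), so that a single term works on all of $\mathbf{L}$, and the matching of the language produced to the global dichotomy, so that the extracted term lies in $\{\vee,\wedge,\to,{}^{\ast},0,1\}$ in case (1) and in $\mathcal{L}$ in case (2).
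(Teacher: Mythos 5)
There is a genuine gap in your central step $(\ast\ast)\Rightarrow$(c), which you yourself flag as ``the main obstacle'' but never close. The Baker--Pixley interpolation you invoke (Theorem \ref{Baker-Pixley para clases caso localmente finitas} applied to $\mathcal{K}=\{\mathbf{L}\}$ in the language $\mathcal{L}$) has as hypothesis that every subalgebra of $\mathbf{L}\times\mathbf{L}$ is closed under $f\times f$, and as conclusion that $f$ is an $\mathcal{L}$-term function. In case (1) both fail for exactly the functions at issue: take $\mathbf{L}=\mathbf{3}$ and $f=\rightarrow^{\mathbf{3}}$, which satisfies (c) and hence (a) and (b), yet the Stone subalgebra $S_{4}=\{(0,0),(1/2,1/2),(1/2,1),(1,1)\}$ of $\mathbf{3}\times\mathbf{3}$ is not closed under $f\times f$ (indeed $(1/2,1)\rightarrow(1/2,1/2)=(1,1/2)\notin S_{4}$), and $f$ is not an $\mathcal{L}$-term. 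So the interpolation cannot be run in $\mathcal{L}$; and it cannot be run in $\{\vee,\wedge,\rightarrow,^{\ast},0,1\}$ either, since compatibility of $f$ with the Heyting subalgebra structure is not part of $(\ast\ast)$ and, in case (2), the existence of $\rightarrow^{\mathbf{L}}$ is precisely what has to be refuted. The same example defeats the proposed reduction ``to its behaviour on one and two generated subalgebras'': for $\mathbf{L}=\mathbf{3}\times\mathbf{3}$ (a three valued Heyting algebra) the set $S_{4}$ is a two-generated Stone subalgebra of $\mathbf{L}$ that is not even closed under $\rightarrow^{\mathbf{L}}$, so $f$ need not restrict to such subalgebras; and your ``previous step'' classifies functions only on $\mathbf{2}$ and $\mathbf{3}$, not on arbitrary finite subdirect products of them.

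What is missing is the mechanism the paper uses to pass between $\mathbf{L}$ and its subdirectly irreducible factors. Writing $\mathbf{L}\leq\prod\mathbf{L}_{u}$ with each $\mathbf{L}_{u}\in\{\mathbf{2},\mathbf{3}\}$, the paper first shows (its Claim 3) that the formula $\varphi\in\left[\exists\bigwedge\mathrm{At}(\mathcal{L})\right]$ defining $f$ on $\mathbf{L}$ defines a function $f_{u}$ on each factor and that $f$ is the restriction of $(f_{u})_{u}$; this uses the special fact that every subquasivariety of Stone algebras is a variety, so the uniqueness quasi-identity survives the quotient maps. Only then does the finite analysis on $\mathbf{3}$ (your local step, which does match the paper's Claim 1) transport: in case (1) the single Heyting term representing $f_{u_{0}}$ on $\mathbf{3}$ represents every $f_{u}$ and hence $f$; in case (2) one must show that a pp-definable $f$ not representable by an $\mathcal{L}$-term forces $\rightarrow^{\mathbf{L}}$ to exist on all of $\mathbf{L}$ and to be three valued --- the paper's Claims 2 and 5, where $\rightarrow^{\mathbf{3}}$ is extracted as a term of $(\mathbf{3},f_{u_{0}})$ and $L$ is observed to be closed under $(f_{u})_{u}$, hence under the componentwise implication. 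Your proposal contains neither the descent to the factors nor this closure argument, and together they are the substantive content of (b)$\Rightarrow$(c); likewise your (c)$\Rightarrow(\ast)$ reduction to the single map $^{\ast\ast}$ on $\mathbf{3}$ is asserted rather than derived (it needs either the meet-prime congruence argument of Proposition \ref{traduccion} or the formula-lifting route the paper takes).
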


\begin{proof}
Let $\mathbf{3}$ be the three element Stone algebra $(\{0,1/2,1\},\max ,\min
,^{\ast },0,1)$. Let $\mathbf{2}$ denote the subalgebra of $\mathbf{3}$ with
universe $\{0,1\}$, i.e. $\mathbf{2}$ is the two element Boolean algebra.
First we prove a series of claims.\medskip

\noindent \textbf{Claim 1.} Let $f:\{0,1/2,1\}^{n}\rightarrow \{0,1/2,1\}$.
The following are equivalent:

\begin{enumerate}
\item[(i)] There is a formula in $\left[ \bigwedge \mathrm{At}(\mathcal{L})%
\right] $ which defines $f$ in $\mathbf{3}$.

\item[(ii)] There is a formula in $\left[ \exists \bigwedge \mathrm{At}(%
\mathcal{L})\right] $ which defines $f$ in $\mathbf{3}$.

\item[(iii)] $f$ is a term function of $(\mathbf{3},\rightarrow ^{\mathbf{3}%
})$.
\end{enumerate}

\noindent Proof. Since $^{\ast \ast }:\{0,1/2,1\}\rightarrow \{0,1/2,1\}$ is
the only non trivial homomorphism between subalgebras of $\mathbf{3}$,
Corollary \ref{algebraicas para A con traduccion} and Proposition \ref%
{primitive positives para A con traduccion} say that both (i) and (ii) are
equivalent to

\begin{enumerate}
\item[(iv)] $f(x_{1},\ldots ,x_{n})^{\ast \ast }=f(x_{1}^{\ast \ast },\ldots
,x_{n}^{\ast \ast })$, for any $x_{1},\ldots ,x_{n}\in \{0,1/2,1\}$.\medskip
\end{enumerate}

\noindent Since $^{\ast \ast }:\{0,1/2,1\}\rightarrow \{0,1/2,1\}$ is a
Heyting homomorphism, we have that (iii) implies (iv), and so (iii) implies
(i) and (ii). Suppose that (iv) holds and that $f$ is not a term function of 
$(\mathbf{3},\rightarrow ^{\mathbf{3}})$. We will arrive at a contradiction.
Note that (iv) implies that $\{0,1\}$ is closed under $f$. Since $f$ is not
a term function, the Baker-Pixley Theorem says that at least one of the
following subuniverses of $(\mathbf{3},\rightarrow ^{\mathbf{3}})\times (%
\mathbf{3},\rightarrow ^{\mathbf{3}})$ is not closed under $f\times f$,%
\begin{equation*}
\begin{array}{l}
S_{1}=\{(0,0),(1/2,1/2),(1,1/2),(1/2,1),(1,1)\}\text{,} \\ 
S_{2}=\{(0,0),(1/2,1),(1,1)\}\text{,} \\ 
S_{3}=\{(0,0),(1,1/2),(1,1)\}\text{.}%
\end{array}%
\end{equation*}%
(The other subuniverses of $(\mathbf{3},\rightarrow ^{\mathbf{3}})\times (%
\mathbf{3},\rightarrow ^{\mathbf{3}})$ are clearly closed under $f\times f$%
.) Suppose $S_{1}$ is not closed under $f\times f$. Note that $\mathbf{S}%
_{1} $ is generated by $\{(1,1/2),(1/2,1)\}$. So, we can suppose that $f$ is
binary, satisfies (iv) and $(f(1,1/2),f(1/2,1))\notin S_{1}$. If $%
(f(1,1/2),f(1/2,1))=(0,1/2)$, then%
\begin{equation*}
\begin{array}{lll}
(0,1) & = & (0^{\ast \ast },1/2^{\ast \ast }) \\ 
& = & (f(1^{\ast \ast },1/2^{\ast \ast }),f(1/2^{\ast \ast },1^{\ast \ast }))
\\ 
& = & (f(1,1),f(1,1))%
\end{array}%
\end{equation*}%
which is absurd. The other cases are similar. If either $S_{2}$ or $S_{3}$
is not closed under $f\times f$ we can arrive to a contradiction in a
similar manner.\medskip

\noindent \textbf{Claim 2.} Let $f:\{0,1/2,1\}^{n}\rightarrow \{0,1/2,1\}$.
If there is a formula $\varphi \in \left[ \exists \bigwedge \mathrm{At}(%
\mathcal{L})\right] $ which defines $f$ in $\mathbf{3}$ and $f$ is not a
term function of $\mathbf{3}$, then $\rightarrow ^{\mathbf{3}}$ is a term
function of $(\mathbf{3},f)$.\medskip

\noindent Proof. By Claim 1, $f$ is a term function of $(\mathbf{3}%
,\rightarrow ^{\mathbf{3}})$. By the Baker-Pixley Theorem, since $f$ is not
a term function of $\mathbf{3}$ we have that%
\begin{equation*}
S_{4}=\{(0,0),(1/2,1/2),(1/2,1),(1,1)\}
\end{equation*}%
or%
\begin{equation*}
S_{5}=\{(0,0),(1/2,1/2),(1,1/2),(1,1)\}
\end{equation*}%
is not closed under $f\times f$ (the other subuniverses are Heyting
subuniverses and hence they are closed under $f\times f$). But $\mathbf{S}%
_{4}$ and $\mathbf{S}_{5}$ are isomorphic and $f\times f$ is defined by $%
\varphi $ in $\mathbf{3}\times \mathbf{3}$ which implies that both $S_{4}$
and $S_{5}$ are not closed under $f\times f$. Thus every subalgebra of $(%
\mathbf{3},f)\times (\mathbf{3},f)$ is a Heyting subalgebra which by the
Baker-Pixley Theorem says that $\rightarrow ^{\mathbf{3}}$ is a term
operation of $(\mathbf{3},f)$.\medskip

\noindent \textbf{Claim 3.} If $\mathbf{L}$ is a Stone algebra which is a
subdirect product of a family of Stone algebras $\{\mathbf{L}_{i}:i\in I\}$,
and $\varphi \in \left[ \exists \bigwedge \mathrm{At}(\mathcal{L})\right] $
defines an $n$-ary function $f$ on $L$, then $\varphi $ defines a function $%
f_{i}$ on each $L_{i}$, and $f$ is the restriction of $(f_{i})_{i\in I}$ to $%
L^{n}$.\medskip

\noindent Proof. Since $\mathbf{L}_{i}$ is a homomorphic image of $\mathbf{L}
$, we have that $\mathbf{L}_{i}\vDash \exists z\varphi (\vec{x},z)$. Suppose 
$\varphi (\vec{x},z)=\exists \vec{w}\psi \left( \vec{w},\vec{x},z\right) $
with $\psi \in \left[ \bigwedge \mathrm{At}(\mathcal{L})\right] $. Since
every subquasivariety of the variety of Stone algebras is a variety and $%
\mathbf{L}\vDash \psi (\vec{w},\vec{x},z_{1})\wedge \psi (\vec{w},\vec{x}%
,z_{2})\rightarrow z_{1}=z_{2}$, we have that $\mathbf{L}_{i}\vDash \varphi (%
\vec{x},z_{1})\wedge \varphi (\vec{x},z_{2})\rightarrow z_{1}=z_{2}$, which
says that $\varphi $ defines a function on $L_{i}$.\medskip

\noindent \textbf{Claim 4.} If $\mathbf{A}\leq \mathbf{B}$, and $\varphi \in %
\left[ \exists \bigwedge \mathrm{At}(\mathcal{L})\right] $ defines $n$-ary
functions $f$ on $\mathbf{A}$ and $g$ on $\mathbf{B}$, then $f$ is equal to
the restriction of $g$ to $A^{n}$.\medskip

\noindent Proof. Trivial.\medskip

We are ready to prove (1). Suppose the Heyting implication exists in $%
\mathbf{L}$, and $(L,\vee ,\wedge ,\rightarrow ^{\mathbf{L}},0,1)$ is a
three valued Heyting algebra. Recall that $(\{0,1\},\max ,$ $\min
,\rightarrow ^{\mathbf{2}},0,1)$ and $(\{0,1/2,1\},\max ,\min ,\rightarrow ^{%
\mathbf{3}},0,1)$ are the only subdirectly irreducible three valued Heyting
algebras. Since $x^{\ast }=x\rightarrow ^{\mathbf{L}}0$ for every $x\in L$,
we can suppose that%
\begin{equation*}
(L,\vee ,\wedge ,^{\ast },\rightarrow ^{\mathbf{L}},0,1)\leq \prod \{\mathbf{%
L}_{u}:u\in I\cup J\}
\end{equation*}%
is a subdirect product, where $\mathbf{L}_{u}=(\mathbf{2},\rightarrow ^{%
\mathbf{2}})$ for $u\in I$, and $\mathbf{L}_{u}=(\mathbf{3},\rightarrow ^{%
\mathbf{3}})$ for $u\in J$. Suppose that $J\neq \emptyset $ and let $%
u_{0}\in J$. The case $J=\emptyset $ is left to the reader.

(a)$\Rightarrow $(b). This is clear.

(b)$\Rightarrow $(c). By Claim 3, $\varphi $ defines a function $f_{u}$ on
each $\mathbf{L}_{u}$ and $f$ is the restriction of $(f_{u})_{u\in I\cup J}$
to $L^{n}$. Note that $f_{u}=f_{v}$ whenever $u,v\in I$ or $u,v\in J$. By
Claim 4, for every $u\in I$, the function $f_{u}$ is the restriction of $%
f_{u_{0}}$ to $\{0,1\}^{n}$. By Claim 1, there is a $\{\vee ,\wedge ,^{\ast
},\rightarrow ,0,1\}$-term $p$ such that $f_{u_{0}}=p^{\mathbf{L}_{u_{0}}}$.
Note that $f_{u}=p^{\mathbf{L}_{u}}$, for every $u\in I\cup J$ and hence $%
f=p^{(L,\vee ,\wedge ,^{\ast },\rightarrow ^{\mathbf{L}},0,1)}$.

(c)$\Rightarrow $(a). Suppose $p$ is a $\{\vee ,\wedge ,^{\ast },\rightarrow
,0,1\}$-term such that $f=p^{(L,\vee ,\wedge ,^{\ast },\rightarrow ^{\mathbf{%
L}},0,1)}$. By Claim 1, $p^{\mathbf{L}_{u_{0}}}$ is definable in $\mathbf{L}%
_{u_{0}}=(\mathbf{3},\rightarrow ^{\mathbf{3}})$ by a formula $\varphi \in %
\left[ \bigwedge \mathrm{At}(\mathcal{L})\right] $. Note that, for every $%
u\in $ $I\cup J$, we have that $p^{\mathbf{L}_{u}}$ is defined in $\mathbf{L}%
_{u}$ by $\varphi $. But $f$ is the restriction of $(p^{\mathbf{L}%
_{u}})_{u\in I\cup J}$ to $L$, which implies that $f$ is defined by $\varphi 
$ in $\mathbf{L}$.

Next we prove (2). The implication (c)$\Rightarrow $(a) is immediate. In
fact, if an $\mathcal{L}$-term $t\left( \vec{x}\right) $ represents $f$ in $%
\mathbf{L}$, then the formula $z_{1}=t\left( \vec{x}\right) $ defines $f$ in 
$\mathbf{L}$. To show (b)$\Rightarrow $(c) we prove:\medskip

\noindent \textbf{Claim 5. }Assume $f:L^{n}\rightarrow L$ is not
representable by an $\mathcal{L}$-term in $\mathbf{L}$, and suppose $f$ is
defined in $\mathbf{L}$ by a formula $\varphi \in \left[ \exists \bigwedge 
\mathrm{At}(\mathcal{L})\right] $. Then the Heyting implication exists in $%
\mathbf{L}$, and $(L,\vee ,\wedge ,\rightarrow ^{\mathbf{L}},0,1)$ is a
three valued Heyting algebra.\medskip

To prove this claim we first note that since $\mathbf{2}$ and $\mathbf{3}$
are the only subdirectly irreducible Stone algebras we can suppose that%
\begin{equation*}
\mathbf{L}\leq \prod \{\mathbf{L}_{u}:u\in I\cup J\}
\end{equation*}%
is a subdirect product, where $\mathbf{L}_{u}=\mathbf{2}$ for $u\in I$, and $%
\mathbf{L}_{u}=\mathbf{3}$ for $u\in J$. By Claim 3, $\varphi $ defines a
function $f_{u}$ on each $\mathbf{L}_{u}$ and $f$ is the restriction of $%
(f_{u})_{u\in I\cup J}$ to $L$. Note that $f_{u}=f_{v}$ whenever $u,v\in I$
or $u,v\in J$. Also, by Claim 4, the function $f_{u}$ is the restriction of $%
f_{v}$ to $\{0,1\}$, whenever $u\in I$ and $v\in J$. If $J=\emptyset $, then
since $\mathbf{2}$ is primal, there is an $\mathcal{L}$-term $p$ which
represents $f_{u}$ in $\mathbf{2}$. But this is impossible since this
implies that $f$ is representable by $p$ in $\mathbf{L}$. So $J\neq
\emptyset $. Let $u_{0}\in J$. Since $f$ is not representable by an $%
\mathcal{L}$-term in $\mathbf{L}$, we have that $f_{u_{0}}$ is not
representable by an $\mathcal{L}$-term in $\mathbf{3}$. Thus Claim 2 implies
that $\rightarrow ^{\mathbf{3}}$ is a term function of $(\mathbf{3}%
,f_{u_{0}})$. Note that the same term witnesses that $\rightarrow ^{\mathbf{L%
}_{u}}$ is a term function of $(\mathbf{L}_{u},f_{u})$ for every $u\in I\cup
J$. But $\mathbf{L}$ is closed under $(f_{u})_{u\in I\cup J}$, and hence we
have that $\mathbf{L}$ is closed under $(\rightarrow ^{\mathbf{L}%
_{u}})_{u\in I\cup J}$. This says that the Heyting implication exists in $%
\mathbf{L}$, and that $(L,\vee ,\wedge ,\rightarrow ^{\mathbf{L}},0,1)$ is a
three valued Heyting algebra.
\end{proof}

Functions definable by a formula of the form $\dbigwedge p=q$ are called 
\emph{mono-algebraic}. They are studied in \cite{Ca-Va} using sheaf
representations.

\subsubsection{Primitive positive functions in De Morgan algebras}

Let $\mathcal{L}=\{\vee ,\wedge ,^{-},0,1\}$ be the language of De Morgan
algebras. Let $\mathbf{M}=(\{0,a,b,1\},\vee ,\wedge ,^{-},0,1)$ where $%
(\{0,a,b,1\},\vee ,\wedge ,0,1)$ is the two-atom Boolean lattice and $\bar{0}%
=1$, $\bar{1}=0$, $\bar{a}=a$ and $\bar{b}=b$. It is well known that $%
\mathbf{M}$ is a simple De Morgan algebra which generates the variety of all
De Morgan algebras \cite{ba-dw}. Primitive positive functions of $\mathbf{M}$
can be characterized as follows.

\begin{proposition}
\label{pp de morgan}For any function $f:M^{n}\rightarrow M$, the following
are equivalent

\begin{enumerate}
\item[(1)] There is a formula in $\left[ \bigwedge \mathrm{At}(\mathcal{L})%
\right] $ which defines $f$ in $\mathbf{M}$.

\item[(2)] There is a formula in $\left[ \exists \bigwedge \mathrm{At}(%
\mathcal{L})\right] $ which defines $f$ in $\mathbf{M}$.

\item[(3)] $f$ is a term operation of $(\mathbf{M},^{\circ })$, where $%
0^{\circ }=0$, $1^{\circ }=1$, $a^{\circ }=b$ and $b^{\circ }=a$.

\item[(4)] $f(x_{1},\ldots ,x_{n})^{\circ }=f(x_{1}^{\circ },\ldots
,x_{n}^{\circ })$, for any $x_{1},\ldots ,x_{n}\in M$.
\end{enumerate}
\end{proposition}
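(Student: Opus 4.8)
The plan is to derive (1)$\Leftrightarrow$(4) and (2)$\Leftrightarrow$(4) from the machinery of Corollary \ref{algebraicas para A con traduccion} and Proposition \ref{primitive positives para A con traduccion}, and to treat (3)$\Leftrightarrow$(4) by a Baker--Pixley computation in the algebra $(\mathbf{M},{}^{\circ})$. First I would record the structure of $\mathbf{M}$: its proper nontrivial subalgebras are $\{0,1\}$, the two three-element Kleene chains $\{0,a,1\}$ and $\{0,b,1\}$, and $\mathbf{M}$ itself, and each of these is a \emph{simple} De Morgan algebra, so $\mathbb{S}(\mathbf{M})\subseteq\mathcal{Q}_{RS}$ where $\mathcal{Q}$ is the variety of De Morgan algebras (which is relatively congruence distributive because the lattice median is a majority term). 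I would also observe that an automorphism of $\mathbf{M}$ must fix $0,1$ and permute the two fixed points $a,b$ of ${}^{-}$, so the only nontrivial one is ${}^{\circ}$; and that every isomorphism between subalgebras of $\mathbf{M}$ is the restriction of $\mathrm{id}$ or of ${}^{\circ}$, the sole non-identity case being $\{0,a,1\}\to\{0,b,1\}$, $a\mapsto b$.

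Given this, (2)$\Leftrightarrow$(4) is immediate from the isomorphism version of Proposition \ref{primitive positives para A con traduccion}: its condition asks that every automorphism of $\mathbf{M}$ be an automorphism of $(\mathbf{M},f)$, and since the only nontrivial automorphism is ${}^{\circ}$ this says exactly $f(x_1,\ldots,x_n)^{\circ}=f(x_1^{\circ},\ldots,x_n^{\circ})$. Similarly (1)$\Leftrightarrow$(4) follows from the isomorphism version of Corollary \ref{algebraicas para A con traduccion}: specializing its condition to $\mathbf{S}_1=\mathbf{S}_2=\mathbf{M}$ and $\sigma={}^{\circ}$ yields (4), while conversely, since every inner isomorphism is a restriction of $\mathrm{id}$ or ${}^{\circ}$, condition (4) forces $\sigma(f(\vec a))=f(\sigma(\vec a))$ for all of them.

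The substantive step is (4)$\Rightarrow$(3); the reverse is trivial since ${}^{\circ}$ is an automorphism of $(\mathbf{M},{}^{\circ})$ and hence preserves every term operation. For (4)$\Rightarrow$(3) I would use that $(\mathbf{M},{}^{\circ})$ has the lattice median as a majority term and invoke the Baker--Pixley Theorem: it suffices to show that any $f$ satisfying (4) preserves every subuniverse of $(\mathbf{M},{}^{\circ})$ and of $(\mathbf{M},{}^{\circ})^{2}$. The convenient device is the coordinatization $\mathbf{M}\cong\{0,1\}^{2}$ under which $\vee,\wedge$ are coordinatewise, $\overline{(x_1,x_2)}=(\neg x_2,\neg x_1)$, and ${}^{\circ}$ is the coordinate swap $(x_1,x_2)\mapsto(x_2,x_1)$. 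The only subuniverses are $\{0,1\}$ and $M$, and (4) already forces $f$ to preserve $\{0,1\}$, since tuples over $\{0,1\}$ are ${}^{\circ}$-fixed and hence so are their images. Identifying the subuniverses of the square with partitions of the four bit-positions invariant under $(1\,2)(3\,4)$, one finds that the binary compatible relations are exactly $M^{2}$, the diagonal $\Delta_{M}$, the graph of ${}^{\circ}$, the products $\{0,1\}\times\{0,1\}$, $\{0,1\}\times M$, $M\times\{0,1\}$, and $\{(0,0),(1,1)\}$. Each is preserved by any $f$ satisfying (4): the full relation and the diagonals automatically, the graph of ${}^{\circ}$ precisely by (4), and the remaining four because $f$ preserves $\{0,1\}$. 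Hence $f$ is a term operation of $(\mathbf{M},{}^{\circ})$.

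The point I expect to require the most care is the completeness of this list of subalgebras of $(\mathbf{M},{}^{\circ})^{2}$; this is exactly where the Boolean coordinatization and the invariance-under-$(1\,2)(3\,4)$ bookkeeping make the count routine and reliable, so that the Baker--Pixley criterion can be applied with confidence.
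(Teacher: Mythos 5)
Your proof is correct, and it rests on the same two pillars as the paper's: the inner-isomorphism criteria of Corollary \ref{algebraicas para A con traduccion} and Proposition \ref{primitive positives para A con traduccion} (both applicable in their isomorphism versions since, as you check, every subalgebra of $\mathbf{M}$ is simple), and the Baker--Pixley Theorem applied to $(\mathbf{M},{}^{\circ})$. The differences are in the arrangement and in one computation. The paper proves the cycle (1)$\Rightarrow$(2)$\Rightarrow$(3)$\Rightarrow$(4)$\Rightarrow$(1), using Corollary \ref{algebraicas para A con traduccion} only for (4)$\Rightarrow$(1) and deducing from (2) the two facts --- $f$ commutes with ${}^{\circ}$ and $\{0,1\}$ is closed under $f$ --- that you instead read off directly from (4); your hub-and-spoke arrangement around (4) is equivalent. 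The one genuine divergence is the enumeration of the subuniverses of $(\mathbf{M},{}^{\circ})^{2}$: the paper notes that $\overline{x}^{\circ}$ is Boolean complementation, so $(\mathbf{M},{}^{\circ})$ generates an arithmetical variety, and invokes Fleischer's theorem for this simple algebra; you coordinatize $M\cong\{0,1\}^{2}$ and classify the Boolean subalgebras of $\{0,1\}^{4}$ invariant under the swap $(1\,2)(3\,4)$. Your count of seven is in fact the complete one --- the paper's displayed list of five omits $\{0,1\}\times M$ and $M\times\{0,1\}$, harmlessly, since both are preserved by any $f$ preserving $\{0,1\}$ --- so your more pedestrian bookkeeping is, if anything, the safer route. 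One small slip: besides $\{0,a,1\}\rightarrow\{0,b,1\}$ there are two further non-identity inner isomorphisms, namely ${}^{\circ}$ itself and the inverse map $\{0,b,1\}\rightarrow\{0,a,1\}$; but all are restrictions of $\mathrm{id}$ or ${}^{\circ}$, which is all your argument actually uses.
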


\begin{proof}
(1)$\Rightarrow $(2) This is trivial.

(2)$\Rightarrow $(3) We note that%
\begin{equation*}
^{\circ }:\{0,a,b,1\}\rightarrow \{0,a,b,1\}
\end{equation*}%
is an automorphism of $(\mathbf{M},^{\circ })$. Since $f$ is preserved by $%
^{\circ }$, assumption (2) implies that $\{0,1\}$ is closed under $f$. Also
note that%
\begin{equation*}
\overline{x}^{\circ }=\text{ Boolean complement of }x\text{,}
\end{equation*}%
which says that $(\mathbf{M},^{\circ })$ generates an arithmetical variety.
Since this algebra is simple and $\{0,1\}$ is its only proper subuniverse,
Fleischer's theorem says that the subuniverses of $(\mathbf{M},^{\circ
})\times (\mathbf{M},^{\circ })$ are: $\{0,1\}\times \{0,1\}$, $M\times M$, $%
\{(x,x^{\circ }):x\in M\}$, $\{(x,x):x\in M\}$ and $\{(0,0),(1,1)\}$. Each
of these is easily seen to be closed under $f\times f$, which by the
Baker-Pixley Theorem says that $f$ is a term function of $(\mathbf{M}%
,^{\circ })$.

(3)$\Rightarrow $(4) This is clear since $^{\circ }$ is an automorphism of $(%
\mathbf{M},^{\circ })$.

(4)$\Rightarrow $(1) Note that $\mathbf{M}$ has three non trivial inner
isomorphisms which are $^{\circ }$ and the restrictions of $^{\circ }$ to $%
\{0,a,1\}$ and $\{0,b,1\}$. Since the variety of De Morgan algebras is
congruence distributive, Corollary \ref{algebraicas para A con traduccion}
says that (1) holds.
\end{proof}

\section{Term interpolation}

Given a structure $\mathbf{A}$, an interesting --albeit often elusive--
problem is to provide a useful description of its term-operations. That is,
to give concise (semantical) conditions that characterize when a given
function $f:A^{n}\rightarrow A$ is a term-operation. This is beautifully
accomplished in the classical Baker-Pixley Theorem for the case in which $%
\mathbf{A}$ is finite and has a near-unanimity term \cite{ba-pi}.

A natural way to generalize this problem to classes of structures is as
follows. Given a class $\mathcal{K}$ of $\mathcal{L}$-structures and a map $%
\mathbf{A}\rightarrow f^{\mathbf{A}}$ which assigns to each $\mathbf{A}\in 
\mathcal{K}$ an $n$-ary operation $f^{\mathbf{A}}:A^{n}$ $\rightarrow A$,
provide conditions that guarantee the existence of a term $t$ such that $t^{%
\mathbf{A}}=f^{\mathbf{A}}$ on every $\mathbf{A}$ in $\mathcal{K}$. We
address this problem for classes in the current section, and obtain some
interesting results including generalizations of the aforementioned
Baker-Pixley Theorem and for Pixley's theorem characterizing the
term-operations of quasiprimal algebras \cite{Pixley}.

Another avenue of generalization we considered are functions that are
interpolated by a finite number of terms. This is also looked at in the
setting of classes.

\subsection*{Term-valued functions by cases}

Let $f\in \mathcal{L}$ be a function symbol, and let $\mathcal{K}$ be a
class of $\mathcal{L}$-structures. Given $\mathcal{L}$-terms $t_{1}(\vec{x}%
),\dots ,t_{k}(\vec{x})$ and first order $\mathcal{L}$-formulas $\varphi
_{1}(\vec{x}),\dots ,\varphi _{k}(\vec{x})$, we write%
\begin{equation*}
f=\left. t_{1}\right\vert _{\varphi _{1}}\cup \ldots \cup \left.
t_{k}\right\vert _{\varphi _{k}}\text{ in }\mathcal{K}
\end{equation*}%
to express that $\mathcal{K}\vDash \varphi _{1}(\vec{x})\vee \dots \vee
\varphi _{k}(\vec{x})$ and%
\begin{equation*}
f^{\mathbf{A}}(\vec{a})=\left\{ 
\begin{array}{cc}
t_{1}^{\mathbf{A}}(\vec{a}) & \text{if }\mathbf{A}\vDash \varphi _{1}(\vec{a}%
) \\ 
\vdots & \vdots \\ 
t_{k}^{\mathbf{A}}(\vec{a}) & \text{if }\mathbf{A}\vDash \varphi _{k}(\vec{a}%
)%
\end{array}%
\right.
\end{equation*}%
for all $\vec{a}\in A^{n}$ and $\mathbf{A}\in \mathcal{K}$. (Note that as $%
f^{\mathbf{A}}$ is a function the definition by cases is not ambiguous.) We
say that a term $t(\vec{x})$ \emph{represents }$f$ in $\mathcal{K}$ if $f^{%
\mathbf{A}}(\vec{a})=t^{\mathbf{A}}(\vec{a})$, for all $\mathbf{A}\in 
\mathcal{K}$ and $\vec{a}\in A^{n}$.

With the help of results from previous sections it is possible to
characterize when $f=\left. t_{1}\right\vert _{\varphi _{1}}\cup \ldots \cup
\left. t_{k}\right\vert _{\varphi _{k}}$, with the $t_{i}$'s not involving $%
f $ and a fixed format for the $\varphi _{i}$'s.

\begin{theorem}
\label{term valued with open cases}Let $\mathcal{L}\subseteq \mathcal{L}%
^{\prime }$ be first order languages and let $f\in \mathcal{L}^{\prime }-%
\mathcal{L}$ be an $n$-ary function symbol. Let $\mathcal{K}$ be a class of $%
\mathcal{L}^{\prime }$-structures. The following are equivalent:

\begin{enumerate}
\item[(1)] $f=\left. t_{1}\right\vert _{\varphi _{1}}\cup \ldots \cup \left.
t_{k}\right\vert _{\varphi _{k}}$ in $\mathcal{K}$, with each $t_{i}$ an $%
\mathcal{L}$-term and each $\varphi _{i}$ in $\mathrm{Op}(\mathcal{L})$.

\item[(2)] The following conditions hold:

\begin{enumerate}
\item[(a)] For all $\mathbf{A}\in \mathbb{P}_{u}(\mathcal{K})$ and all $%
\mathbf{S}\leq \mathbf{A}_{\mathcal{L}}$, we have that $S$ is closed under $%
f^{\mathbf{A}}$.

\item[(b)] For all $\mathbf{A},\mathbf{B}\in \mathbb{P}_{u}(\mathcal{K})$,
all $\mathbf{A}_{0}\leq \mathbf{A}_{\mathcal{L}}$, $\mathbf{B}_{0}\leq 
\mathbf{B}_{\mathcal{L}}$, and all isomorphisms $\sigma :\mathbf{A}%
_{0}\rightarrow \mathbf{B}_{0}$, we have that $\sigma :(\mathbf{A}_{0},f^{%
\mathbf{A}}|_{A_{0}})\rightarrow (\mathbf{B}_{0},f^{\mathbf{B}}|_{B_{0}})$
is an isomorphism.
\end{enumerate}
\end{enumerate}

\noindent Moreover, if $\mathcal{K}_{\mathcal{L}}$ has finitely many
isomorphism types of $(n+1)$-generated substructures and each one is finite,
then we can remove the operator $\mathbb{P}_{u}$ from (a) and (b).
\end{theorem}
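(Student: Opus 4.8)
The plan is to dispatch $(1)\Rightarrow(2)$ by direct preservation arguments and to reduce $(2)\Rightarrow(1)$ to Theorem \ref{(positive) open} followed by a compactness reduction over the $\mathcal{L}$-terms. For $(1)\Rightarrow(2)$ I would first observe that the data witnessing (1) are encoded by the first order sentences $\mathcal{K}\vDash\varphi_1\vee\dots\vee\varphi_k$ and $\mathcal{K}\vDash\varphi_i(\vec{x})\rightarrow f(\vec{x})=t_i(\vec{x})$, which survive the passage to $\mathbb{P}_u(\mathcal{K})$; hence $f$ is still term-valued by the same open cases on every $\mathbf{A}\in\mathbb{P}_u(\mathcal{K})$. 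For (a), given $\mathbf{S}\leq\mathbf{A}_{\mathcal{L}}$ and $\vec{a}\in S^n$, I pick $i$ with $\mathbf{A}\vDash\varphi_i(\vec{a})$; then $f^{\mathbf{A}}(\vec{a})=t_i^{\mathbf{A}}(\vec{a})\in S$ because $t_i$ is an $\mathcal{L}$-term and $S$ is an $\mathcal{L}$-substructure. For (b), given an isomorphism $\sigma:\mathbf{A}_0\rightarrow\mathbf{B}_0$ and $\vec{a}\in A_0^n$, I choose $i$ with $\mathbf{A}\vDash\varphi_i(\vec{a})$; since $\varphi_i$ is open it descends to $\mathbf{A}_0$, transfers across $\sigma$ to $\mathbf{B}_0$, and lifts back up to $\mathbf{B}$, so $f^{\mathbf{B}}(\sigma\vec{a})=t_i^{\mathbf{B}}(\sigma\vec{a})=\sigma(t_i^{\mathbf{A}}(\vec{a}))=\sigma(f^{\mathbf{A}}(\vec{a}))$, using that $\sigma$ commutes with the $\mathcal{L}$-term $t_i$. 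Together with (a), which makes the restrictions $f^{\mathbf{A}}|_{A_0}$ and $f^{\mathbf{B}}|_{B_0}$ well defined, this says precisely that $\sigma$ is an isomorphism of the $f$-expanded structures.

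For $(2)\Rightarrow(1)$ I would first note that (a) and (b) together imply condition (2) of Theorem \ref{(positive) open}: (a) guarantees $f^{\mathbf{A}}(\vec{a})\in A_0$ whenever $\vec{a}\in A_0^n$, and (b) then yields $\sigma(f^{\mathbf{A}}(\vec{a}))=f^{\mathbf{B}}(\sigma\vec{a})$. Consequently Theorem \ref{(positive) open} furnishes an open $\mathcal{L}$-formula $\psi(\vec{x},z)$ defining $f$ in $\mathcal{K}$, and hence in $\mathbb{P}_u(\mathcal{K})$ as well. The device that turns this into a term-by-cases description is the substitution $\theta_t(\vec{x}):=\psi(\vec{x},t(\vec{x}))$ ranging over all $\mathcal{L}$-terms $t(\vec{x})$: each $\theta_t$ is open and $\mathbb{P}_u(\mathcal{K})\vDash\theta_t(\vec{x})\leftrightarrow f(\vec{x})=t(\vec{x})$. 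I then use (a) a second time, applied to the $\mathcal{L}$-substructure generated by $\vec{a}$: it is closed under $f^{\mathbf{A}}$, so $f^{\mathbf{A}}(\vec{a})=t^{\mathbf{A}}(\vec{a})$ for some $\mathcal{L}$-term $t$, which is to say $\mathbb{P}_u(\mathcal{K})\vDash\bigvee_t\theta_t(\vec{x})$.

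The concluding step is a compactness reduction of this infinite disjunction, of the kind used in the proof of Theorem \ref{(positive) open para relaciones}. If no finite subdisjunction held in $\mathbb{P}_u(\mathcal{K})$, then for each finite set $F$ of terms there would be $\mathbf{A}_F\in\mathbb{P}_u(\mathcal{K})$ and $\vec{a}_F$ with $\mathbf{A}_F\vDash\bigwedge_{t\in F}\lnot\theta_t(\vec{a}_F)$; forming an ultraproduct of the $\mathbf{A}_F$ along an ultrafilter concentrating on the sets $\{F:t\in F\}$, and invoking $\mathbb{P}_u\mathbb{P}_u(\mathcal{K})\subseteq\mathbb{P}_u(\mathcal{K})$ together with the fundamental theorem on ultraproducts, produces a member of $\mathbb{P}_u(\mathcal{K})$ violating $\bigvee_t\theta_t$, a contradiction. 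Hence $\mathcal{K}\vDash\theta_{t_1}\vee\dots\vee\theta_{t_k}$ for finitely many terms $t_1,\dots,t_k$, and setting $\varphi_i:=\theta_{t_i}$ gives $f=t_1|_{\varphi_1}\cup\dots\cup t_k|_{\varphi_k}$ in $\mathcal{K}$.

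For the moreover part I would replace the two appeals to $\mathbb{P}_u$ by the moreover part of Theorem \ref{(positive) open} (to obtain $\psi$ from the ultraproduct-free form of (a) and (b)) and by the observation that, under the stated finiteness of the $(n+1)$-generated substructures, only finitely many $\mathcal{L}$-terms $t(\vec{x})$ are inequivalent over $\mathcal{K}$, so the disjunction $\bigvee_t\theta_t$ is already finite and the compactness step can be dropped. The main obstacle I anticipate is exactly this ultraproduct reduction — ensuring the infinite term-disjunction can be trimmed to a finite one without leaving $\mathbb{P}_u(\mathcal{K})$; the algebraic heart, namely that (a) forces $f(\vec{a})$ into the $\mathcal{L}$-subalgebra generated by $\vec{a}$ and therefore equal to some term applied to $\vec{a}$, is comparatively routine.
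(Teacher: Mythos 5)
Your proposal is correct and follows essentially the same route as the paper: pass to $\mathbb{P}_{u}(\mathcal{K})$ for $(1)\Rightarrow(2)$, and for $(2)\Rightarrow(1)$ combine the open formula supplied by Theorem \ref{(positive) open} with a compactness reduction of the disjunction $\bigvee_{t}f(\vec{x})=t(\vec{x})$ coming from condition (a). The only (immaterial) difference is that the paper applies compactness to $\bigvee_{t}f(\vec{x})=t(\vec{x})$ before introducing the defining formula, whereas you apply it to the equivalent $\mathcal{L}$-disjunction $\bigvee_{t}\psi(\vec{x},t(\vec{x}))$ afterwards.
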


\begin{proof}
(1)$\Rightarrow $(2). If $f=\left. t_{1}\right\vert _{\varphi _{1}}\cup
\ldots \cup \left. t_{k}\right\vert _{\varphi _{k}}$ in $\mathcal{K}$, with
each $t_{i}$ an $\mathcal{L}$-term and each $\varphi _{i}$ in $\mathrm{Op}(%
\mathcal{L})$, then $f=\left. t_{1}\right\vert _{\varphi _{1}}\cup \ldots
\cup \left. t_{k}\right\vert _{\varphi _{k}}$ in $\mathbb{P}_{u}(\mathcal{K}%
) $. Now (a) and (b) are routine verifications.

(2)$\Rightarrow $(1). We first note that (a) implies%
\begin{equation*}
\mathbb{P}_{u}(\mathcal{K})\vDash \dbigvee_{t(\vec{x})\text{ an }\mathcal{L}%
\text{-term }}f(\vec{x})=t(\vec{x})\text{,}
\end{equation*}%
which by compactness says that there are $\mathcal{L}$-terms $t_{1}(\vec{x}%
),\dots ,t_{k}(\vec{x})$ such that%
\begin{equation*}
\mathbb{P}_{u}(\mathcal{K})\vDash f(\vec{x})=t_{1}(\vec{x})\vee \dots \vee f(%
\vec{x})=t_{k}(\vec{x})\text{.}
\end{equation*}%
Since (b) holds, Theorem \ref{(positive) open} implies that there exists a
formula $\varphi \in \mathrm{Op}(\mathcal{L})$ which defines $f$ in $%
\mathcal{K}$. It is clear that for any $\mathbf{A}\in \mathcal{K}$ we have%
\begin{equation*}
\mathcal{K}\vDash \varphi (\vec{x},t_{1}(\vec{x}))\vee \dots \vee \varphi (%
\vec{x},t_{k}(\vec{x}))
\end{equation*}%
and%
\begin{equation*}
f^{\mathbf{A}}(\vec{a})=\left\{ 
\begin{array}{cc}
t_{1}^{\mathbf{A}}(\vec{a}) & \text{if }\mathbf{A}\vDash \varphi (\vec{a}%
,t_{1}(\vec{a})) \\ 
\vdots & \vdots \\ 
t_{k}^{\mathbf{A}}(\vec{a}) & \text{if }\mathbf{A}\vDash \varphi (\vec{a}%
,t_{k}(\vec{a}))%
\end{array}%
\right.
\end{equation*}%
for all $\vec{a}\in A^{n}$. So we have proved that $f=\left.
t_{1}\right\vert _{\varphi _{1}}\cup \ \ldots \ \cup \left. t_{k}\right\vert
_{\varphi _{k}}$ in $\mathcal{K}$, where $\varphi _{i}(\vec{x})=\varphi (%
\vec{x},t_{i}(\vec{x}))$, $i=1,\ldots ,k$.

If $\mathcal{K}_{\mathcal{L}}$ has finitely many isomorphism types of $(n+1)$%
-generated substructures and each one is finite, then we note that there are 
$\mathcal{L}$-terms $t_{1}(\vec{x}),\dots ,t_{k}(\vec{x})$ such that%
\begin{equation*}
\mathcal{K}\vDash f(\vec{x})=t_{1}(\vec{x})\vee \dots \vee f(\vec{x})=t_{k}(%
\vec{x})
\end{equation*}%
and the proof can be continued in the same manner as above.
\end{proof}

\begin{corollary}
Let $\mathcal{K}$ be any class of $\mathcal{L}$-algebras contained in a
locally finite variety. Suppose $\mathbf{A}\rightarrow f^{\mathbf{A}}$ is a
map which assigns to each $\mathbf{A}\in \mathcal{K}$ an $n$-ary operation $%
f^{\mathbf{A}}:A^{n}$ $\rightarrow A$. The following are equivalent:

\begin{enumerate}
\item[(1)] $f=\left. t_{1}\right\vert _{\varphi _{1}}\cup \ldots \cup \left.
t_{k}\right\vert _{\varphi _{k}}$ in $\mathcal{K}$, with each $t_{i}$ an $%
\mathcal{L}$-term and each $\varphi _{i}$ in $\mathrm{Op}(\mathcal{L})$.

\item[(2)] The following conditions hold:

\begin{enumerate}
\item[(a)] If $\mathbf{S}\leq \mathbf{A}\in \mathcal{K}$, then $S$ is closed
under $f^{\mathbf{A}}$.

\item[(b)] If $\mathbf{A}_{0}\leq \mathbf{A}\in \mathcal{K}$, $\mathbf{B}%
_{0}\leq \mathbf{B}\in \mathcal{K}$ and $\sigma :\mathbf{A}_{0}\rightarrow 
\mathbf{B}_{0}$ is an isomorphism, then $\sigma :(\mathbf{A}_{0},f^{\mathbf{A%
}})\rightarrow (\mathbf{B}_{0},f^{\mathbf{B}})$ is an isomorphism.
\end{enumerate}
\end{enumerate}
\end{corollary}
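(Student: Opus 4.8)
The plan is to reduce this corollary to the moreover part of Theorem \ref{term valued with open cases} by the standard device of absorbing the assigned operation into the structure. Concretely, I would take a fresh $n$-ary function symbol $f\notin \mathcal{L}$, put $\mathcal{L}'=\mathcal{L}\cup\{f\}$, and form the class of $\mathcal{L}'$-structures
\[
\mathcal{K}_f=\{(\mathbf{A},f^{\mathbf{A}}):\mathbf{A}\in\mathcal{K}\},
\]
where $(\mathbf{A},f^{\mathbf{A}})$ is the expansion of $\mathbf{A}$ interpreting $f$ as $f^{\mathbf{A}}$. Since $(\mathcal{K}_f)_{\mathcal{L}}=\mathcal{K}$, the conditions (a) and (b) of the theorem, read off for $\mathcal{K}_f$, translate verbatim into conditions (2)(a) and (2)(b) of the present corollary: $\mathbf{A}_{\mathcal{L}}$ is just $\mathbf{A}$, the symbol $f$ is interpreted as $f^{\mathbf{A}}$, and once (a) holds each $f^{\mathbf{A}}|_{A_0}$ is a genuine operation on $A_0$, so the maps in (b) are exactly those of the theorem.

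The one step carrying actual content is checking that $\mathcal{K}_f$ meets the finiteness hypothesis of the moreover part, namely that $(\mathcal{K}_f)_{\mathcal{L}}=\mathcal{K}$ has finitely many isomorphism types of $(n+1)$-generated substructures, each finite. Here I would use that $\mathcal{K}$ lies inside a locally finite variety $\mathcal{V}$: any $(n+1)$-generated substructure of a member of $\mathcal{K}$ is an $(n+1)$-generated algebra of $\mathcal{V}$ (varieties are closed under subalgebras), hence a homomorphic image of the free algebra $F_{\mathcal{V}}(n+1)$. Local finiteness makes $F_{\mathcal{V}}(n+1)$ finite, and a finite algebra has only finitely many congruences and therefore finitely many quotients up to isomorphism; thus there are finitely many such substructures up to isomorphism and each is finite.

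With the hypothesis in place I would simply invoke the moreover part of Theorem \ref{term valued with open cases}: conditions (2)(a)--(b) supply the $\mathbb{P}_u$-free version of (2) for $\mathcal{K}_f$, yielding $f=\left.t_1\right|_{\varphi_1}\cup\ldots\cup\left.t_k\right|_{\varphi_k}$ in $\mathcal{K}_f$ with each $t_i$ an $\mathcal{L}$-term and each $\varphi_i\in\mathrm{Op}(\mathcal{L})$; as these $t_i$ and $\varphi_i$ use only $\mathcal{L}$-symbols and $f$ is interpreted as $f^{\mathbf{A}}$, the same representation holds over $\mathcal{K}$. The reverse implication (1)$\Rightarrow$(2) is immediate and is likewise the transcription of (1)$\Rightarrow$(2) of the theorem. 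I expect essentially no obstacle beyond the finiteness verification above; the remaining work is bookkeeping between the reduct class $\mathcal{K}$ and its expansion $\mathcal{K}_f$, and the only point to state carefully is that the restricted operations $f^{\mathbf{A}}|_{A_0}$ in the theorem's condition (b) are precisely the maps appearing in (2)(b) of the corollary.
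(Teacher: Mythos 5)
Your proposal is correct and coincides with the paper's (implicit) argument: the paper handles this corollary exactly as it handles its earlier analogues, namely by applying the moreover part of Theorem \ref{term valued with open cases} to the class $\{(\mathbf{A},f^{\mathbf{A}}):\mathbf{A}\in \mathcal{K}\}$, with the finiteness hypothesis supplied by local finiteness just as you verify. The only content beyond bookkeeping is that verification, and you carry it out correctly.
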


Using Theorem \ref{term valued with open cases} and its proof as a blueprint
it is easy to produce analogous results for other families of formulas. For
example here is the positive open case.

\begin{theorem}
\label{term valued with positive cases}Let $\mathcal{L}\subseteq \mathcal{L}%
^{\prime }$ be first order languages and let $f\in \mathcal{L}^{\prime }-%
\mathcal{L}$ be an $n$-ary function symbol. Let $\mathcal{K}$ be a class of $%
\mathcal{L}^{\prime }$-structures. The following are equivalent:

\begin{enumerate}
\item[(1)] $f=\left. t_{1}\right\vert _{\varphi _{1}}\cup \ldots \cup \left.
t_{k}\right\vert _{\varphi _{k}}$ in $\mathcal{K}$, with each $t_{i}$ an $%
\mathcal{L}$-term and each $\varphi _{i}$ in $\left[ \bigvee \bigwedge 
\mathrm{At}(\mathcal{L})\right] $.

\item[(2)] $f=\left. t_{1}\right\vert _{\varphi _{1}}\cup \ldots \cup \left.
t_{k}\right\vert _{\varphi _{k}}$ in $\mathcal{K}$, with each $t_{i}$ an $%
\mathcal{L}$-term and each $\varphi _{i}$ in $\left[ \bigwedge \mathrm{At}(%
\mathcal{L})\right] $.

\item[(3)] The following conditions hold:

\begin{enumerate}
\item[(a)] For all $\mathbf{A}\in \mathbb{P}_{u}(\mathcal{K})$ and all $%
\mathbf{S}\leq \mathbf{A}_{\mathcal{L}}$, we have that $S$ is closed under $%
f^{\mathbf{A}}$.

\item[(b)] For all $\mathbf{A},\mathbf{B}\in \mathbb{P}_{u}(\mathcal{K})$,
all $\mathbf{A}_{0}\leq \mathbf{A}_{\mathcal{L}}$, $\mathbf{B}_{0}\leq 
\mathbf{B}_{\mathcal{L}}$, and all homomorphisms $\sigma :\mathbf{A}%
_{0}\rightarrow \mathbf{B}_{0}$, we have that $\sigma :(\mathbf{A}_{0},f^{%
\mathbf{A}})\rightarrow (\mathbf{B}_{0},f^{\mathbf{B}})$ is a homomorphism.
\end{enumerate}
\end{enumerate}

\noindent Moreover, if $\mathcal{K}_{\mathcal{L}}$ has finitely many
isomorphism types of $(n+1)$-generated substructures and each one is finite,
then we can remove the operator $\mathbb{P}_{u}$ from (a) and (b).
\end{theorem}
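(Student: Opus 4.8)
The plan is to follow the proof of Theorem \ref{term valued with open cases} almost verbatim, replacing throughout its appeals to isomorphisms and open formulas by appeals to homomorphisms and positive open formulas, and establishing the cycle (1)$\Rightarrow$(3)$\Rightarrow$(2)$\Rightarrow$(1). The implication (2)$\Rightarrow$(1) is immediate, since every formula in $[\bigwedge\mathrm{At}(\mathcal{L})]$ already lies in $[\bigvee\bigwedge\mathrm{At}(\mathcal{L})]$, so a representation of the form demanded by (2) is in particular one of the form demanded by (1). Thus the only genuine work lies in (1)$\Rightarrow$(3), which is routine, and in (3)$\Rightarrow$(2), where the defining formula must be pushed down to conjunctions of atomic formulas.

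For (1)$\Rightarrow$(3), assume $f=\left.t_{1}\right|_{\varphi_{1}}\cup\ldots\cup\left.t_{k}\right|_{\varphi_{k}}$ in $\mathcal{K}$ with each $\varphi_i\in[\bigvee\bigwedge\mathrm{At}(\mathcal{L})]$. As this representation is encoded by first order sentences, it persists in $\mathbb{P}_u(\mathcal{K})$. Condition (a) is clear: if $\mathbf{S}\leq\mathbf{A}_{\mathcal{L}}$ and $\vec{a}\in S^n$, then $f^{\mathbf{A}}(\vec{a})=t_i^{\mathbf{A}}(\vec{a})$ for some $i$, and $S$ is closed under the $\mathcal{L}$-term $t_i$. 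For (b), let $\sigma:\mathbf{A}_0\to\mathbf{B}_0$ be a homomorphism and $\vec{a}\in A_0^n$. Choose $i$ with $\mathbf{A}\vDash\varphi_i(\vec{a})$, so $f^{\mathbf{A}}(\vec{a})=t_i^{\mathbf{A}}(\vec{a})$. Since $\varphi_i$ is quantifier free and built from atomic $\mathcal{L}$-formulas, and atomic formulas are absolute between $\mathbf{A}_{\mathcal{L}}$ and its substructure $\mathbf{A}_0$, we get $\mathbf{A}_0\vDash\varphi_i(\vec{a})$; as $\varphi_i$ is positive and $\sigma$ a homomorphism, $\mathbf{B}_0\vDash\varphi_i(\sigma(\vec{a}))$, hence $\mathbf{B}\vDash\varphi_i(\sigma(\vec{a}))$ and $f^{\mathbf{B}}(\sigma(\vec{a}))=t_i^{\mathbf{B}}(\sigma(\vec{a}))$. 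Finally $\sigma(f^{\mathbf{A}}(\vec{a}))=\sigma(t_i^{\mathbf{A}}(\vec{a}))=t_i^{\mathbf{B}}(\sigma(\vec{a}))=f^{\mathbf{B}}(\sigma(\vec{a}))$, because terms commute with homomorphisms, so $\sigma:(\mathbf{A}_0,f^{\mathbf{A}})\to(\mathbf{B}_0,f^{\mathbf{B}})$ is a homomorphism.

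For (3)$\Rightarrow$(2) I would first use (a) exactly as in Theorem \ref{term valued with open cases}: it forces $\mathbb{P}_u(\mathcal{K})\vDash\bigvee_{t}f(\vec{x})=t(\vec{x})$, the disjunction ranging over all $\mathcal{L}$-terms $t(\vec{x})$, and compactness yields finitely many $\mathcal{L}$-terms $t_1,\dots,t_k$ with $\mathbb{P}_u(\mathcal{K})\vDash f(\vec{x})=t_1(\vec{x})\vee\dots\vee f(\vec{x})=t_k(\vec{x})$. Condition (b) is precisely the homomorphism hypothesis (2) of Theorem \ref{(positive) open} for the pair $\mathcal{L}\subseteq\mathcal{L}^{\prime}$ and the symbol $f$, so the $[\bigvee\bigwedge\mathrm{At}(\mathcal{L})]$/homomorphism branch of that theorem provides a formula $\varphi(\vec{x},z)\in[\bigvee\bigwedge\mathrm{At}(\mathcal{L})]$ defining $f$ in $\mathcal{K}$. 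Substituting the terms gives $\varphi_i(\vec{x}):=\varphi(\vec{x},t_i(\vec{x}))$, still in $[\bigvee\bigwedge\mathrm{At}(\mathcal{L})]$ since substituting a term into an atomic formula yields an atomic formula, and $\mathbf{A}\vDash\varphi_i(\vec{a})$ iff $f^{\mathbf{A}}(\vec{a})=t_i^{\mathbf{A}}(\vec{a})$.

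The step that differs from the open case, and the one I expect to be the main obstacle, is converting each positive open condition $\varphi_i$ into conjunctions of atomic formulas as demanded by (2). Here I would write each $\varphi_i$ in disjunctive form $\varphi_i=\bigvee_{j}\pi_{ij}$ with $\pi_{ij}\in[\bigwedge\mathrm{At}(\mathcal{L})]$ and distribute the disjunction over the cases, replacing the single case $\left.t_i\right|_{\varphi_i}$ by the family $\left.t_i\right|_{\pi_{ij}}$. Two checks are needed: correctness, namely that $\mathbf{A}\vDash\pi_{ij}(\vec{a})$ implies $\mathbf{A}\vDash\varphi_i(\vec{a})$ and hence $f^{\mathbf{A}}(\vec{a})=t_i^{\mathbf{A}}(\vec{a})$; and exhaustiveness, namely $\mathcal{K}\vDash\bigvee_{i,j}\pi_{ij}(\vec{x})$, which holds because $\bigvee_{i,j}\pi_{ij}$ is equivalent to $\bigvee_i\varphi_i$, and the latter holds everywhere since $\bigvee_i f(\vec{x})=t_i(\vec{x})$ does and each $f(\vec{x})=t_i(\vec{x})$ is equivalent to $\varphi_i(\vec{x})$. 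This gives $f=\bigcup_{i,j}\left.t_i\right|_{\pi_{ij}}$ in $\mathcal{K}$, which is (2). For the moreover part I would drop $\mathbb{P}_u$ throughout: under the finiteness hypothesis, (a) yields the finitely many covering terms $t_1,\dots,t_k$ directly over $\mathcal{K}$, and the moreover part of Theorem \ref{(positive) open} supplies the defining formula over $\mathcal{K}$ without ultraproducts, after which the substitution and disjunctive-splitting arguments go through unchanged.
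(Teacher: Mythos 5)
Your proof is correct and is exactly the intended argument: the paper omits the proof of this theorem, stating only that Theorem \ref{term valued with open cases} and its proof serve as a blueprint, and your write-up carries out that blueprint faithfully (compactness from (a) for the covering terms, Theorem \ref{(positive) open} from (b) for the defining formula, substitution of the $t_i$'s for $z$). The one genuinely new ingredient beyond the open case --- splitting each $\varphi_i\in\left[\bigvee\bigwedge\mathrm{At}(\mathcal{L})\right]$ into its disjuncts $\pi_{ij}\in\left[\bigwedge\mathrm{At}(\mathcal{L})\right]$ and refining the case decomposition accordingly to pass from (1) to (2) --- is handled correctly, including the exhaustiveness and correctness checks.
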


\subsection*{Pixley's theorem for classes}

Recall that the \emph{ternary discriminator} on the set $\mathbf{A}$ is the
function%
\begin{equation*}
d^{\mathbf{A}}(x,y,z)=\left\{ 
\begin{tabular}{ll}
$z$ & \ \ if $x=y$, \\ 
$x$ & \ \ if $x\neq y$.%
\end{tabular}%
\ \right.
\end{equation*}%
An algebra $\mathbf{A}$ is called \textit{quasiprimal} if it is finite and
has the discriminator as a term function.

A well known result of A. Pixley \cite{Pixley} characterizes quasiprimal
algebras as those finite algebras in which every function preserving the
inner isomorphisms is a term function. Of course the ternary discriminator
preserves the inner isomorphisms and hence one direction of the theorem is
trivial. The following theorem generalizes the non trivial direction.

\begin{theorem}
\label{generalizacion imp no trivial de pixley}Let $\mathcal{L}\subseteq 
\mathcal{L}^{\prime }$ be first order languages without relation symbols and
let $f\in \mathcal{L}^{\prime }-\mathcal{L}$ be an $n$-ary function symbol.
Let $\mathcal{K}$ be a class of $\mathcal{L}^{\prime }$-algebras such that
there is an $\mathcal{L}$-term representing the ternary discriminator in
each member of $\mathcal{K}$. Suppose that

\begin{enumerate}
\item[(a)] If $\mathbf{A}\in \mathbb{P}_{u}(\mathcal{K})$ and $\mathbf{S}%
\leq \mathbf{A}_{\mathcal{L}}$, then $S$ is closed under $f^{\mathbf{A}}$.

\item[(b)] If $\mathbf{A},\mathbf{B}\in \mathbb{P}_{u}(\mathcal{K})$, $%
\mathbf{A}_{0}\leq \mathbf{A}_{\mathcal{L}}$, $\mathbf{B}_{0}\leq \mathbf{B}%
_{\mathcal{L}}$ and $\sigma :\mathbf{A}_{0}\rightarrow \mathbf{B}_{0}$ is an
isomorphism, then $\sigma :(\mathbf{A}_{0},f^{\mathbf{A}})\rightarrow (%
\mathbf{B}_{0},f^{\mathbf{B}})$ is an isomorphism.
\end{enumerate}

\noindent Then $f$ is representable by an $\mathcal{L}$-term in $\mathcal{K}$%
. Moreover, if $\mathcal{K}_{\mathcal{L}}$ has finitely many isomorphism
types of $(n+1)$-generated subalgebras and each one is finite, then we can
remove the operator $\mathbb{P}_{u}$ from (a) and (b).
\end{theorem}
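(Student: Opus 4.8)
The plan is to reduce this theorem to the already-proved Theorem \ref{term valued with open cases} by exploiting the discriminator term. The key observation is that hypotheses (a) and (b) here are precisely conditions (a) and (b) of Theorem \ref{term valued with open cases}, so that theorem immediately yields a representation $f=\left. t_{1}\right\vert _{\varphi _{1}}\cup \ldots \cup \left. t_{k}\right\vert _{\varphi _{k}}$ in $\mathcal{K}$ with each $t_i$ an $\mathcal{L}$-term and each $\varphi_i\in\mathrm{Op}(\mathcal{L})$. The whole task then becomes: collapse such a finite case-definition into a \emph{single} $\mathcal{L}$-term, using the presence of the discriminator. This is where the discriminator does its essential work, since on discriminator varieties open formulas are very well-behaved.

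First I would invoke Theorem \ref{term valued with open cases} to obtain the case representation. Next, the central step: since there is an $\mathcal{L}$-term $d(x,y,z)$ representing the ternary discriminator in each member of $\mathcal{K}$, the class $\mathbb{IS}\mathbb{P}_u(\mathcal{K})_{\mathcal{L}}$ (or the relevant universal class generated by the reducts) is a discriminator class, and in such classes every open $\mathcal{L}$-formula is equivalent to a conjunction of equations — more precisely, atomic formulas and their negations can be merged using the discriminator. The standard device is that a disjunction or a case-split governed by an equation $p=q$ versus $p\neq q$ can be written uniformly: if $f$ equals $t_1$ when $p=q$ and equals $t_2$ when $p\neq q$, then $f=d(p,q,t_2,\ldots)$-style combinations express this by a single term. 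Concretely, I would argue that two cases glued along an atomic condition $\varphi$ collapse to one term via the discriminator, and then induct on the number $k$ of cases to fold all of them into a single $\mathcal{L}$-term $t$ with $t^{\mathbf{A}}=f^{\mathbf{A}}$ on every $\mathbf{A}\in\mathcal{K}$.

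The main obstacle is handling the fact that each $\varphi_i$ is an arbitrary open formula, not merely a single equation. I would first normalize: using that the discriminator makes the reducts generate a congruence-distributive (indeed arithmetical) discriminator variety, each $\varphi_i$ is equivalent over $\mathbb{P}_u(\mathcal{K})_{\mathcal{L}}$ to a Boolean combination of equations, and on discriminator classes such a Boolean combination is equivalent to a single equation $s_i(\vec{x})=r_i(\vec{x})$ (this is the familiar fact that in discriminator varieties the compact congruences form a Boolean algebra and every open formula reduces to an equation). Once each case condition is a single equation, the discriminator term lets me patch the finitely many term-pieces together explicitly: I build $t$ by nesting applications of $d$, so that $t$ selects $t_i$ exactly on the locus where $\varphi_i$ holds. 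Because the $\varphi_i$ cover $\mathcal{K}$ and $f$ is single-valued, the nested-discriminator term evaluates correctly on each $\mathbf{A}\in\mathcal{K}$, giving the desired $\mathcal{L}$-term representing $f$.

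For the moreover part, the argument is identical: under the finiteness hypothesis on $\mathcal{K}_{\mathcal{L}}$, the moreover clause of Theorem \ref{term valued with open cases} supplies the case representation \emph{without} the $\mathbb{P}_u$ operator, and the discriminator-based collapse to a single term is purely syntactic and so goes through verbatim. Thus I would simply remark that the same reduction applies, removing $\mathbb{P}_u$ from (a) and (b) throughout.
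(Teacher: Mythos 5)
Your proposal follows essentially the same route as the paper: invoke Theorem \ref{term valued with open cases} to obtain $f=\left. t_{1}\right\vert _{\varphi _{1}}\cup \ldots \cup \left. t_{k}\right\vert _{\varphi _{k}}$ with open conditions, translate the $\varphi_i$ into equational form using the discriminator term, and collapse the cases into a single term by nesting discriminators, inducting on $k$ (the moreover part likewise mirrors the paper). One small correction: over a discriminator class an open formula is equivalent to a single equation \emph{or} to a single negated equation, not always to an equation as you assert (e.g.\ $x\neq y$ is not equivalent to any equation over Boolean algebras); this does not damage your argument, since the patching device you describe already handles both polarities --- exactly what the paper's quaternary discriminator $D(x,y,z,w)=t(t(x,y,z),t(x,y,w),w)$ is introduced to do.
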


\begin{proof}
By Theorem \ref{term valued with open cases} we have that $f=\left.
t_{1}\right\vert _{\varphi _{1}}\cup \ldots \cup \left. t_{k}\right\vert
_{\varphi _{k}}$ in $\mathcal{K}$, with each $t_{i}$ an $\mathcal{L}$-term
and each $\varphi _{i}$ in $\mathrm{Op}(\mathcal{L})$. We shall prove that $%
f $ is representable by an $\mathcal{L}$-term in $\mathcal{K}$. Of course,
if $k=1$, the theorem follows. Suppose $k>1$. We show that we can reduce $k$.

Let $t\left( x,y,z\right) $ be an $\mathcal{L}$-term representing the
discriminator in $\mathcal{K}$. Then the $\mathcal{L}$-term%
\begin{equation*}
D(x,y,z,w)=t\left( t\left( x,y,z\right) ,t\left( x,y,w\right) ,w\right)
\end{equation*}%
represents the \textit{quaternary discriminator} in $\mathcal{K}$, that is,
for every $a,b,c,d\in A$, with $\mathbf{A}\in \mathcal{K}$,%
\begin{equation*}
D^{\mathbf{A}}(a,b,c,d)=\left\{ 
\begin{tabular}{ll}
$c$ & $a=b$ \\ 
$d$ & $a\neq b$.%
\end{tabular}%
\right.
\end{equation*}%
Having a discriminator term for $\mathcal{K}$ also provides the following
translation property (see \cite{we}):

\begin{itemize}
\item For every open $\mathcal{L}$-formula $\varphi (\vec{x})$ there exist $%
\mathcal{L}$-terms $p(\vec{x})$ and $q(\vec{x})$ such that either $\mathcal{K%
}\vDash \varphi (\vec{x})\leftrightarrow p(\vec{x})=q(\vec{x})$ or $\mathcal{%
K}\vDash \varphi (\vec{x})\leftrightarrow p(\vec{x})\neq q(\vec{x})$.
\end{itemize}

Now, suppose for example that%
\begin{equation*}
\begin{tabular}{l}
$\mathcal{K}\vDash \varphi _{1}(\vec{x})\leftrightarrow p_{1}(\vec{x})\neq
q_{1}(\vec{x})$, \\ 
$\mathcal{K}\vDash \varphi _{2}(\vec{x})\leftrightarrow p_{2}(\vec{x})\neq
q_{2}(\vec{x})$,%
\end{tabular}%
\end{equation*}%
for some $\mathcal{L}$-terms $p_{i}(\vec{x})$ and $q_{i}(\vec{x})$. Then%
\begin{equation*}
f^{\mathbf{A}}(\overline{a})=\left\{ 
\begin{array}{ll}
D(p_{1},q_{1},t_{2},t_{1})^{\mathbf{A}}(\vec{a}) & \ \ \ \text{if }\mathbf{A}%
\vDash \varphi (\vec{a}) \\ 
t_{3}^{\mathbf{A}}(\vec{a}) & \ \ \ \text{if }\mathbf{A}\vDash \varphi _{3}(%
\vec{a}) \\ 
\ \ \ \vdots & \ \ \ \ \ \ \ \ \ \ \ \vdots \\ 
t_{k}^{\mathbf{A}}(\vec{a}) & \ \ \ \text{if }\mathbf{A}\vDash \varphi _{k}(%
\vec{a})%
\end{array}%
\right.
\end{equation*}%
where $\varphi (\vec{x})=p_{1}(\vec{x})\neq q_{1}(\vec{x})\vee \left( p_{1}(%
\vec{x})=q_{1}(\vec{x})\wedge p_{2}(\vec{x})\neq q_{2}(\vec{x})\right) $.
The other cases are similar.
\end{proof}

For the locally finite case Pixley's theorem can be generalized as follows.

\begin{theorem}
\label{Pixley}Let $\mathcal{K}$ be a class of $\mathcal{L}$-algebras
contained in a locally finite variety. The following are equivalent:

\begin{enumerate}
\item[(1)] There is an $\mathcal{L}$-term representing the ternary
discriminator on each member of $\mathcal{K}$.

\item[(2)] Assume $\mathbf{A}\rightarrow f^{\mathbf{A}}$ is a map which
assigns to each $\mathbf{A}\in \mathcal{K}$ an $n$-ary operation $f^{\mathbf{%
A}}:A^{n}$ $\rightarrow A$ in such a manner that:

\begin{enumerate}
\item[(a)] for all $\mathbf{A}\in \mathcal{K}$ and all $\mathbf{S}\leq 
\mathbf{A}$ we have that $S$ is closed under $f^{\mathbf{A}}$, and

\item[(b)] for all $\mathbf{A},\mathbf{B}\in \mathcal{K}$, all $\mathbf{A}%
_{0}\leq \mathbf{A}$, $\mathbf{B}_{0}\leq \mathbf{B}$, and every isomorphism 
$\sigma :\mathbf{A}_{0}\rightarrow \mathbf{B}_{0}$ we have that $\sigma :(%
\mathbf{A}_{0},f^{\mathbf{A}})\rightarrow (\mathbf{B}_{0},f^{\mathbf{B}})$
is an isomorphism.
\end{enumerate}

\noindent Then $f$ is representable by an $\mathcal{L}$-term in $\mathcal{K}$%
.
\end{enumerate}
\end{theorem}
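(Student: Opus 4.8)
The plan is to prove the two implications separately, using the already-established generalization of the nontrivial direction of Pixley's theorem (Theorem \ref{generalizacion imp no trivial de pixley}) for the hard direction, and a direct verification for the converse.

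For (2)$\Rightarrow$(1) I would simply feed the ternary discriminator into hypothesis (2). Concretely, set $n=3$ and $f^{\mathbf{A}}=d^{\mathbf{A}}$, the ternary discriminator of each $\mathbf{A}\in\mathcal{K}$. Condition (a) is immediate: for $x,y,z$ in a subuniverse $S$ the value $d^{\mathbf{A}}(x,y,z)$ equals either $z$ or $x$, hence lies in $S$. Condition (b) follows from injectivity of an isomorphism $\sigma:\mathbf{A}_0\to\mathbf{B}_0$: since $a=b$ iff $\sigma(a)=\sigma(b)$, the two cases defining the discriminator are respected, so $\sigma(d^{\mathbf{A}}(a,b,c))=d^{\mathbf{B}}(\sigma(a),\sigma(b),\sigma(c))$. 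Thus (2) applies to this particular map and yields an $\mathcal{L}$-term representing $d$ on every member of $\mathcal{K}$, which is exactly (1).

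For (1)$\Rightarrow$(2) I would reduce to the moreover part of Theorem \ref{generalizacion imp no trivial de pixley}, applied to the $(\mathcal{L}\cup\{f\})$-class $\{(\mathbf{A},f^{\mathbf{A}}):\mathbf{A}\in\mathcal{K}\}$ with the discriminator term supplied by (1). The hypotheses (a),(b) of that theorem, in their $\mathbb{P}_u$-free form, are verbatim the hypotheses (a),(b) of (2) here, so the only thing left to check is the finiteness condition needed to drop $\mathbb{P}_u$: that $\mathcal{K}_{\mathcal{L}}=\mathcal{K}$ has finitely many isomorphism types of $(n+1)$-generated subalgebras, each one finite. This is where local finiteness enters. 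If $\mathcal{V}$ is the ambient locally finite variety, its free algebra $\mathbf{F}_{\mathcal{V}}(n+1)$ is finite; every $(n+1)$-generated algebra in $\mathcal{V}$ is a homomorphic image of $\mathbf{F}_{\mathcal{V}}(n+1)$; and a finite algebra has only finitely many homomorphic images up to isomorphism (its congruence lattice being finite). Hence there are only finitely many isomorphism types of $(n+1)$-generated members of $\mathcal{V}$, each finite. The $(n+1)$-generated subalgebras appearing in (a),(b) are among these, so the finiteness hypothesis holds and the moreover part gives that $f$ is representable by an $\mathcal{L}$-term in $\mathcal{K}$.

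The main obstacle is precisely this finiteness-of-isomorphism-types observation, which justifies removing the ultrapower operator and thereby converts the abstract criterion of Theorem \ref{generalizacion imp no trivial de pixley} into the purely finite, checkable conditions (a),(b) of the present statement; everything else is a routine unwinding of definitions.
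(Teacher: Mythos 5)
Your proof is correct and follows essentially the same route as the paper: the paper likewise obtains (1)$\Rightarrow$(2) by applying Theorem \ref{generalizacion imp no trivial de pixley} (in its $\mathbb{P}_u$-free form) to the class $\{(\mathbf{A},f^{\mathbf{A}}):\mathbf{A}\in\mathcal{K}\}$, and (2)$\Rightarrow$(1) by observing that the discriminator map itself satisfies (a) and (b). Your explicit justification of the finiteness-of-isomorphism-types condition via the finite free algebra $\mathbf{F}_{\mathcal{V}}(n+1)$ is exactly the point the paper leaves implicit.
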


\begin{proof}
(1)$\Rightarrow $(2). This follows from Theorem \ref{generalizacion imp no
trivial de pixley} applied to the class $\{(\mathbf{A},f^{\mathbf{A}}\dot{)}:%
\mathbf{A}\in \mathcal{K\}}$.

(2)$\Rightarrow $(1). It is clear that the map $\mathbf{A}\rightarrow d^{%
\mathbf{A}}$ which assigns to each $\mathbf{A}\in \mathcal{K}$ the ternary
discriminator on $\mathbf{A}$, satisfies (a) and (b) of (2).
\end{proof}

\subsection*{Baker-Pixley's theorem for classes}

Let $\mathcal{K}$ be a class of $\mathcal{L}$-algebras. An $\mathcal{L}$%
-term $M\left( x,y,z\right) $ is called a \textit{majority term for} $%
\mathcal{K}$ if the following identities hold in $\mathcal{K}$%
\begin{equation*}
M(x,x,y)\approx M(x,y,x)\approx M(y,x,x)\approx x\text{.}
\end{equation*}%
Next, we shall give a generalization of the well known theorem of Baker and
Pixley \cite{ba-pi} on the existence of terms representing functions in
finite algebras with a majority term. First a lemma.

\begin{lemma}
\label{existencia de terminos en general}Let $\mathcal{K}$ be a class of $%
\mathcal{L}$-algebras contained in a locally finite variety. Let $\mathbf{A}%
\rightarrow f^{\mathbf{A}}$ be a map which assigns to each $\mathbf{A}\in 
\mathcal{K}$ an $n$-ary operation $f^{\mathbf{A}}:A^{n}$ $\rightarrow A$.
Suppose that for any $m\in \mathbb{N}$, $\mathbf{A}_{1},\dots ,\mathbf{A}%
_{m}\in \mathcal{K}$ and $\mathbf{S}\leq \mathbf{A}_{1}\times \ldots \times 
\mathbf{A}_{m}$, we have that $S$ is closed under $f^{\mathbf{A}_{1}}\times
\ldots \times f^{\mathbf{A}_{m}}$. Then $f$ is representable by an $\mathcal{%
L}$-term in $\mathcal{K}$.
\end{lemma}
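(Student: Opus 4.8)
The plan is to build the term directly from a free algebra, exploiting that the hypothesis is a closure condition over \emph{all} finite products of members of $\mathcal{K}$ (not merely a preservation condition on $\mathcal{K}$ itself). Write $\mathcal{V}=\mathbb{V}(\mathcal{K})$; since $\mathcal{K}$ lies in a locally finite variety, so does $\mathcal{V}$, whence the free algebra $\mathbf{F}$ on $n$ free generators $g_1,\dots,g_n$ in $\mathcal{V}$ is finite. By the standard description of free algebras, $\mathbf{F}\in\mathbb{ISP}(\mathcal{K})$, so $\mathbf{F}$ embeds in a product $\prod_{i\in I}\mathbf{A}_i$ with each $\mathbf{A}_i\in\mathcal{K}$; as $\mathbf{F}$ is finite, for each of its finitely many pairs of distinct elements one coordinate separates them, so finitely many coordinates already separate all elements and I may assume $\mathbf{F}\leq\mathbf{A}_1\times\dots\times\mathbf{A}_m$ with $\mathbf{A}_1,\dots,\mathbf{A}_m\in\mathcal{K}$.

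First I would extract a candidate term. Applying the hypothesis with $\mathbf{S}=\mathbf{F}$ shows $F$ is closed under $f^{\mathbf{A}_1}\times\dots\times f^{\mathbf{A}_m}$, so the element
\[
\vec{g}:=\left(f^{\mathbf{A}_1}\times\dots\times f^{\mathbf{A}_m}\right)(g_1,\dots,g_n)
\]
lies in $F$. Because $\mathbf{F}$ is generated by $g_1,\dots,g_n$, there is an $\mathcal{L}$-term $t(\vec{x})$ with $\vec{g}=t^{\mathbf{F}}(g_1,\dots,g_n)$, and I claim this single $t$ represents $f$ in $\mathcal{K}$.

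The crux is transferring the equality from the ``generic'' tuple $(g_1,\dots,g_n)$ to an arbitrary tuple of an arbitrary member of $\mathcal{K}$, and this is exactly where closure over products does the work. Fix $\mathbf{B}\in\mathcal{K}$ and $\vec{b}=(b_1,\dots,b_n)\in B^n$. By freeness there is a homomorphism $h:\mathbf{F}\to\mathbf{B}$ with $h(g_i)=b_i$; its graph $G=\{(s,h(s)):s\in F\}$ is a subalgebra of $\mathbf{F}\times\mathbf{B}$, hence of $\mathbf{A}_1\times\dots\times\mathbf{A}_m\times\mathbf{B}$, a product of members of $\mathcal{K}$. Since each $(g_i,b_i)\in G$, the hypothesis gives
\[
\left(f^{\mathbf{A}_1}\times\dots\times f^{\mathbf{A}_m}\times f^{\mathbf{B}}\right)\big((g_1,b_1),\dots,(g_n,b_n)\big)=(\vec{g},\,f^{\mathbf{B}}(\vec{b}))\in G .
\]
As the first coordinate is $\vec{g}\in F$ and $G$ is the graph of $h$, the second coordinate must equal $h(\vec{g})$; therefore $f^{\mathbf{B}}(\vec{b})=h(\vec{g})=h(t^{\mathbf{F}}(g_1,\dots,g_n))=t^{\mathbf{B}}(\vec{b})$. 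As $\mathbf{B}$ and $\vec{b}$ were arbitrary, $t$ represents $f$ in $\mathcal{K}$. The only delicate step is the reduction to a finite product in the first paragraph, which rests squarely on the finiteness of $\mathbf{F}$; after that, the graph trick is routine and I expect no genuine obstacle. (Alternatively one could route through the corollary to Theorem~\ref{conjuncion de atomicas}, first verifying by the very same graph construction that closure implies its condition~(2) and then converting the resulting conjunction of equations into a term, but the free-algebra argument seems more direct and self-contained.)
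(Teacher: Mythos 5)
Your proof is correct, and it takes a genuinely different route from the paper's. The paper starts from a finite list of terms $t_{1},\ldots ,t_{k}$ enumerating the elements of every $n$-generated subalgebra (local finiteness), assumes for contradiction that no $t_{i}$ works, picks for each $i$ a counterexample $(\mathbf{A}_{i},\vec{a}^{\,i})$, and applies the closure hypothesis to the subalgebra of $\mathbf{A}_{1}\times \cdots \times \mathbf{A}_{k}$ generated by the diagonal tuples; the pigeonhole then contradicts the choice of the $i$-th counterexample. You instead work directly in the finite free algebra $\mathbf{F}$ of $\mathbb{V}(\mathcal{K})$ on $n$ generators, realize it inside a finite product of members of $\mathcal{K}$ (using $\mathbf{F}\in \mathbb{ISP}(\mathcal{K})$ plus finiteness of $\mathbf{F}$ to cut the index set down), read off a \emph{canonical} candidate term $t$ from the closure of $F$ itself, and transfer the identity $f=t$ to an arbitrary $(\mathbf{B},\vec{b})$ via the graph of the evaluation homomorphism $h:\mathbf{F}\rightarrow \mathbf{B}$, which is again a subalgebra of a finite product of members of $\mathcal{K}$. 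Both arguments rest on the same two pillars (local finiteness and closure of subalgebras of finite products), but yours is direct rather than by contradiction and exhibits the term explicitly as the representative of $\left( f^{\mathbf{A}_{1}}\times \cdots \times f^{\mathbf{A}_{m}}\right) (g_{1},\ldots ,g_{n})$ in $\mathbf{F}$, at the modest cost of invoking Birkhoff's description of free algebras; the paper's version needs only the finite list of terms and no explicit free-algebra construction. All the steps you flag as delicate (the reduction to a finite product, the well-definedness of $h$, the graph being a subalgebra) do go through as you say.
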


\begin{proof}
Let $t_{1}(\vec{x}),\dots ,t_{k}(\vec{x})$ be $\mathcal{L}$-terms such that
for every $\mathbf{A}$ in the variety generated by $\mathcal{K}$ and every $%
\vec{a}\in A^{n}$ we have that the subalgebra of $\mathbf{A}$ generated by $%
a_{1},\ldots ,a_{n}$ has universe $\{t_{1}^{\mathbf{A}}(\vec{a}),\ldots
,t_{k}^{\mathbf{A}}(\vec{a})\}$. We prove that $f$ is representable by $%
t_{i} $ in $\mathcal{K}$, for some $i$. Suppose to the contrary that for
each $i$ there are $\mathbf{A}_{i}\in \mathcal{K}$ and $\vec{a}%
^{i}=(a_{1}^{i},\ldots ,a_{n}^{i})\in A_{i}^{n}$ such that $f^{\mathbf{A}%
_{i}}(\vec{a}^{i})\neq t_{i}^{\mathbf{A}_{i}}(\vec{a}^{i})$. Let $\mathbf{S}$
be the subalgebra of $\mathbf{A}_{1}\times \ldots \times \mathbf{A}_{k}$
generated by $\{p_{j}:j=1,\ldots ,n\}$, where $p_{j}=(a_{j}^{1},a_{j}^{2},%
\ldots ,a_{j}^{k})$. Since $S$ is closed under $f^{\mathbf{A}_{1}}\times
\ldots \times f^{\mathbf{A}_{k}}$ we have that%
\begin{equation*}
f^{\mathbf{A}_{1}}\times \ldots \times f^{\mathbf{A}_{k}}(p_{1},\ldots
,p_{n})=(f^{\mathbf{A}_{1}}(\vec{a}^{1}),\ldots ,f^{\mathbf{A}_{k}}(\vec{a}%
^{k}))\in S\text{.}
\end{equation*}%
Thus there is $i$ such that%
\begin{equation*}
(f^{\mathbf{A}_{1}}(\vec{a}^{1}),\ldots ,f^{\mathbf{A}_{k}}(\vec{a}%
^{k}))=t_{i}^{\mathbf{A}_{1}\times \ldots \times \mathbf{A}%
_{m}}(p_{1},\ldots ,p_{n})\text{,}
\end{equation*}%
which produces%
\begin{equation*}
(f^{\mathbf{A}_{1}}(\vec{a}^{1}),\ldots ,f^{\mathbf{A}_{k}}(\vec{a}%
^{k}))=(t_{i}^{\mathbf{A}_{1}}(\vec{a}^{1}),\ldots ,t_{i}^{\mathbf{A}_{k}}(%
\vec{a}^{k}))\text{.}
\end{equation*}%
In particular we have that $f^{\mathbf{A}_{i}}(\vec{a}^{i})=t_{i}^{\mathbf{A}%
_{i}}(\vec{a}^{i})$, a contradiction.
\end{proof}

\begin{theorem}
\label{Baker-Pixley para clases caso localmente finitas}Let $\mathcal{K}$ be
a class of algebras contained in a locally finite variety and suppose that $%
\mathcal{K}$ has a majority term. Let $\mathbf{A}\rightarrow f^{\mathbf{A}}$
be a map which assigns to each $\mathbf{A}\in \mathcal{K}$ an $n$-ary
operation $f^{\mathbf{A}}:A^{n}$ $\rightarrow A$. Assume that for all $%
\mathbf{A},\mathbf{B}\in \mathcal{K}$ and every $\mathbf{S}\leq \mathbf{A}%
\times \mathbf{B}$ we have that $S$ is closed under $f^{\mathbf{A}}\times f^{%
\mathbf{B}}$. Then $f$ is representable by a term in $\mathcal{K}$.
\end{theorem}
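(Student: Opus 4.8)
The plan is to deduce the theorem from Lemma \ref{existencia de terminos en general}. That lemma produces a representing term as soon as we know that $S$ is closed under $f^{\mathbf{A}_1}\times\cdots\times f^{\mathbf{A}_m}$ for \emph{every} finite product and every subalgebra $\mathbf{S}\leq\mathbf{A}_1\times\cdots\times\mathbf{A}_m$ with all $\mathbf{A}_l\in\mathcal{K}$, whereas our hypothesis grants this only for products of two factors. The bridge between the two is the majority term: I will use it to show that subalgebras of finite products are determined by their two-fold projections, and then read the $m$-fold closure off the binary one, coordinate pair by coordinate pair.

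First I would record the (classical, Baker--Pixley) decomposition lemma. Fix a majority term $M$ for $\mathcal{K}$; since its defining identities are equational they persist under products and subalgebras, so $M$ acts coordinatewise as a majority operation on every product $\mathbf{A}_1\times\cdots\times\mathbf{A}_m$ and on each of its subalgebras. For $\mathbf{S}\leq\mathbf{A}_1\times\cdots\times\mathbf{A}_m$ and $1\leq i<j\leq m$, write $\pi_{ij}$ for the projection onto coordinates $i,j$. The claim is that an element $a$ lies in $S$ iff $\pi_{ij}(a)\in\pi_{ij}(S)$ for all $i<j$. One implication is trivial; for the other I would induct on $m$, the cases $m\leq 2$ being immediate. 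For $m\geq 3$, applying the inductive hypothesis to the index sets $\{1,\dots,m-1\}$ and $\{2,\dots,m\}$ yields $b,c\in S$ with $b_i=a_i$ for $i\leq m-1$ and $c_i=a_i$ for $i\geq 2$, while the pair condition on $\{1,m\}$ gives $d\in S$ with $d_1=a_1$ and $d_m=a_m$. A coordinatewise check now gives $M(b,c,d)=a$: at coordinate $1$ it reads $M(a_1,c_1,a_1)=a_1$, at coordinate $m$ it reads $M(b_m,a_m,a_m)=a_m$, and at each coordinate $i$ with $2\leq i\leq m-1$ it reads $M(a_i,a_i,d_i)=a_i$. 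Hence $a=M(b,c,d)\in S$.

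With the decomposition claim in hand the reduction is routine. Given $\mathbf{S}\leq\mathbf{A}_1\times\cdots\times\mathbf{A}_m$ with $m\geq 2$ and $p_1,\dots,p_n\in S$, set $q=f^{\mathbf{A}_1}\times\cdots\times f^{\mathbf{A}_m}(p_1,\dots,p_n)$. For each pair $i<j$ the set $\pi_{ij}(S)$ is a subalgebra of $\mathbf{A}_i\times\mathbf{A}_j$ containing $\pi_{ij}(p_1),\dots,\pi_{ij}(p_n)$, and $\pi_{ij}(q)=(f^{\mathbf{A}_i}\times f^{\mathbf{A}_j})(\pi_{ij}(p_1),\dots,\pi_{ij}(p_n))$; applying the theorem's hypothesis to $\mathbf{A}_i,\mathbf{A}_j$ gives $\pi_{ij}(q)\in\pi_{ij}(S)$, whence $q\in S$ by the claim. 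The case $m=1$ follows from the hypothesis applied to $\mathbf{A}_1\times\mathbf{A}_1$ and the diagonal subalgebra $\{(s,s):s\in S\}$, which forces $S$ to be closed under $f^{\mathbf{A}_1}$. Thus the closure hypothesis of Lemma \ref{existencia de terminos en general} holds for all finite products, and that lemma delivers an $\mathcal{L}$-term representing $f$ in $\mathcal{K}$. The only genuine content is the decomposition claim of the second paragraph --- this is exactly where the majority term is indispensable --- and everything after it is bookkeeping.
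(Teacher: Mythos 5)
Your proof is correct, but it takes a genuinely different route from the paper's. Both arguments funnel through Lemma \ref{existencia de terminos en general}, i.e.\ both reduce the theorem to showing that every subalgebra of an arbitrary finite product $\mathbf{A}_{1}\times \cdots \times \mathbf{A}_{m}$ is closed under $f^{\mathbf{A}_{1}}\times \cdots \times f^{\mathbf{A}_{m}}$. You get there via the classical Baker--Pixley decomposition of subalgebras: in the presence of a majority term, membership in $\mathbf{S}\leq \mathbf{A}_{1}\times \cdots \times \mathbf{A}_{m}$ is detected by the two-fold projections, so the $m$-fold closure is read off the binary hypothesis applied to each $\pi _{ij}(S)\leq \mathbf{A}_{i}\times \mathbf{A}_{j}$ (your induction, pasting the elements $b,c,d$ with $M$, is the standard one and checks out). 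The paper instead works at the level of interpolating terms: from the binary hypothesis it first extracts, for any two points in two algebras of $\mathcal{K}$, a single term agreeing with $f$ at both, and then runs the majority-term induction on \emph{terms} --- taking $M(t_{1},t_{2},t_{3})$ where $t_{l}$ interpolates at all indices except $l$ --- to obtain a term interpolating $f$ at any $m$ points simultaneously; this $m$-point interpolation immediately gives the $m$-fold closure demanded by the lemma. The two inductions are dual incarnations of the same idea (one pastes elements of a subalgebra with $M$, the other pastes terms with $M$). Yours has the advantage of isolating a reusable structural fact about subalgebras of products with a majority term; the paper's never leaves the language of terms, which is the currency Lemma \ref{existencia de terminos en general} ultimately trades in. Your separate treatment of the degenerate case $m=1$ via the diagonal subalgebra of $\mathbf{A}_{1}\times \mathbf{A}_{1}$ is necessary on your route (the pairwise-projection criterion is vacuous for $m=1$) and you were right to supply it.
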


\begin{proof}
First we show that:

\begin{enumerate}
\item[(i)] If $\mathbf{A},\mathbf{B}\in \mathcal{K}$, $\vec{a}\in A^{n}$ and 
$\vec{b}\in B^{n}$, then there\ is\ a\ term $t(\vec{x})$\ satisfying\ $t^{%
\mathbf{A}}(\vec{a})=f^{\mathbf{A}}(\vec{a})$ and $t^{\mathbf{B}}(\vec{b}%
)=f^{\mathbf{B}}(\vec{b})$.
\end{enumerate}

\noindent Let $\mathbf{S}$ be the subalgebra of $\mathbf{A}\times \mathbf{B}$
generated by $\{(a_{1},b_{1}),\ldots ,(a_{n},b_{n})\}$. Since $S$ is closed
under $f^{\mathbf{A}}\times f^{\mathbf{B}}$ we have that%
\begin{equation*}
f^{\mathbf{A}}\times f^{\mathbf{B}}((a_{1},b_{1}),\ldots ,(a_{n},b_{n}))=(f^{%
\mathbf{A}}(\vec{a}),f^{\mathbf{B}}(\vec{b}))\in S\text{.}
\end{equation*}%
Thus there is a term $t(\vec{x})$ such that%
\begin{equation*}
(f^{\mathbf{A}}(\vec{a}),f^{\mathbf{B}}(\vec{b}))=t^{\mathbf{A}\times 
\mathbf{B}}((a_{1},b_{1}),\ldots ,(a_{n},b_{n}))=(t^{\mathbf{A}}(\vec{a}),t^{%
\mathbf{B}}(\vec{b}))\text{.}
\end{equation*}%
Next we prove by induction in $m$ that:

\begin{enumerate}
\item[(I$_{m}$)] If $\mathbf{A}_{1},\dots ,\mathbf{A}_{m}\in \mathcal{K}$
and\ $\vec{a}_{j}\in A_{j}^{n}$, for $j=1,\ldots ,m$,\ then there\ is\ a\
term $t(\vec{x})$\ satisfying\ $t^{\mathbf{A}_{j}}(\vec{a}_{j})=f^{\mathbf{A}%
_{j}}(\vec{a}_{j})$, for $j=1,\ldots ,m$.
\end{enumerate}

\noindent By (i) we have that (I$_{m}$) holds for $m=1,2$. Fix $m\geq 3$, $%
\mathbf{A}_{1},\dots ,\mathbf{A}_{m}\in \mathcal{K}$ and\ $\vec{a}_{j}\in
A_{j}^{n}$, for $j=1,\ldots ,m$. By inductive hypothesis there are terms $%
t_{1}$, $t_{2}$ and $t_{3}$ satisfying%
\begin{equation*}
\begin{tabular}{l}
$t_{1}^{\mathbf{A}_{j}}(\vec{a}_{j})=f^{\mathbf{A}_{j}}(\vec{a}_{j})$, for
all $j\neq 1$, $1\leq j\leq m$, \\ 
$t_{2}^{\mathbf{A}_{j}}(\vec{a}_{j})=f^{\mathbf{A}_{j}}(\vec{a}_{j})$, for
all $j\neq 2$, $1\leq j\leq m$, \\ 
$t_{3}^{\mathbf{A}_{j}}(\vec{a}_{j})=f^{\mathbf{A}_{j}}(\vec{a}_{j})$, for
all $j\neq 3$, $1\leq j\leq m$.%
\end{tabular}%
\end{equation*}%
It is easy to check that $t=M(t_{1},t_{2},t_{3})$ satisfies%
\begin{equation*}
t^{\mathbf{A}_{j}}(\vec{a}_{j})=f^{\mathbf{A}_{j}}(\vec{a}_{j})\text{, for }%
j=1,\ldots ,m\text{.}
\end{equation*}%
We observe that the fact that (I$_{m}$) holds for every $m\geq 1$ implies
that the hypothesis of Lemma \ref{existencia de terminos en general} holds
and hence $f$ is representable by a term in $\mathcal{K}$.
\end{proof}

We conclude the section with another term-interpolation result in the spirit
of the Baker-Pixley Theorem -- in this case, for classes contained in
arithmetical varieties having a universal class of finitely subdirectly
irreducibles. There are plenty of well-known examples of this kind of
varieties, we list a few: f-rings, vector groups, MV-algebras, Heyting
algebras, discriminator varieties, etc.

In our proof we use the notion of a \emph{global subdirect product}, which
is a special kind of subdirect product, tight enough so that significant
information can be obtained from the properties of the factors. We do not
provide the definition here but refer the reader to \cite{kr-cl}.

We write $\mathcal{V}_{FSI}$ to denote the class of finitely subdirectly
irreducible members of a variety $\mathcal{V}$.

\begin{lemma}
\label{existencia de terminos en variedades aritmeticas}Let $\mathcal{V}$ be
an arithmetical variety of $\mathcal{L}$-algebras and suppose that $\mathcal{%
V}_{FSI}$ is universal. If $\mathcal{V}_{FSI}\vDash \forall \vec{x}\exists
!z\ \varphi (\vec{x},z)$, where $\varphi \in \left[ \bigwedge \mathrm{At}(%
\mathcal{L})\right] $, then there exists an $\mathcal{L}$-term $t(\vec{x})$
such that $\mathcal{V}\vDash \forall \vec{x}\ \varphi (\vec{x},t(\vec{x}))$.
\end{lemma}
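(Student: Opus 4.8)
The plan is to reduce the statement to finding a single witnessing element in a free algebra. Let $\mathbf{F}=\mathbf{F}_{\mathcal{V}}(x_{1},\dots ,x_{n})$ be the free algebra of $\mathcal{V}$ on the displayed variables, with $\vec{x}$ its tuple of free generators. Because $\varphi$ is a conjunction of equations, an element $c\in F$ with $\mathbf{F}\vDash \varphi (\vec{x},c)$ is nothing but a term $t(\vec{x})$ for which $\varphi (\vec{x},t(\vec{x}))$ holds at the free generators; by freeness each conjunct then becomes a law of $\mathcal{V}$, so $\mathcal{V}\vDash \forall \vec{x}\,\varphi (\vec{x},t(\vec{x}))$. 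Hence it suffices to produce such a $c$. First I would fix, via Birkhoff's subdirect representation, a subdirect embedding $\mathbf{F}\leq \prod_{k\in I}\mathbf{A}_{k}$ with each $\mathbf{A}_{k}$ subdirectly irreducible, write $\pi _{k}$ for the projection, and let $w_{k}\in A_{k}$ be the unique element with $\mathbf{A}_{k}\vDash \varphi (\pi _{k}\vec{x},w_{k})$. Such a $w_{k}$ exists because each $\mathbf{A}_{k}$ is subdirectly irreducible, hence finitely subdirectly irreducible, so the hypothesis $\mathcal{V}_{FSI}\vDash \forall \vec{x}\exists !z\,\varphi$ applies to it. The tuple $w=(w_{k})_{k}$ then satisfies $\varphi (\vec{x},w)$ in $\prod_{k}\mathbf{A}_{k}$, and the entire difficulty is to show that $w$ lies in the subalgebra $F$.

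Before attacking that, I would record that the uniqueness half of the hypothesis spreads to all of $\mathcal{V}$. The sentence $\forall \vec{x}\forall z_{1}\forall z_{2}\,\bigl(\varphi (\vec{x},z_{1})\wedge \varphi (\vec{x},z_{2})\rightarrow z_{1}=z_{2}\bigr)$ is a quasi-identity, hence preserved under $\mathbb{S}$ and $\mathbb{P}$; it holds on every subdirectly irreducible member of $\mathcal{V}$ (these belong to $\mathcal{V}_{FSI}$), and since every member of $\mathcal{V}$ embeds into a product of such, it holds throughout $\mathcal{V}$. Thus $\varphi$ admits at most one witness in any algebra of $\mathcal{V}$.

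The crux---membership $w\in F$---is exactly where arithmeticity is used. Since $\mathcal{V}$ is arithmetical it has a majority term, so by the Baker--Pixley decomposition of subalgebras of products \cite{ba-pi} one has $w\in F$ if and only if $(w_{i},w_{j})\in \pi _{ij}(\mathbf{F})$ for every pair $i,j\in I$, where $\pi _{ij}$ denotes the projection onto the two coordinates. Fix such a pair. Then $\pi _{ij}(\mathbf{F})\leq \mathbf{A}_{i}\times \mathbf{A}_{j}$ is a subdirect product, and since $\mathcal{V}$ is also congruence permutable, Fleischer's theorem supplies a common quotient $\mathbf{C}$ together with surjections $h_{i}\colon \mathbf{A}_{i}\rightarrow \mathbf{C}$ and $h_{j}\colon \mathbf{A}_{j}\rightarrow \mathbf{C}$ for which $\pi _{ij}(\mathbf{F})=\{(a,b):h_{i}(a)=h_{j}(b)\}$. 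Evaluating on the generators gives $h_{i}(\pi _{i}x_{l})=h_{j}(\pi _{j}x_{l})$ for each $l$; write $c_{l}$ for this common value and set $\vec{c}=(c_{1},\dots ,c_{n})$. Applying $h_{i}$ to $\mathbf{A}_{i}\vDash \varphi (\pi _{i}\vec{x},w_{i})$ and $h_{j}$ to $\mathbf{A}_{j}\vDash \varphi (\pi _{j}\vec{x},w_{j})$, and using that $\varphi \in \left[ \bigwedge \mathrm{At}(\mathcal{L})\right] $ is preserved by homomorphisms, yields $\mathbf{C}\vDash \varphi (\vec{c},h_{i}(w_{i}))$ and $\mathbf{C}\vDash \varphi (\vec{c},h_{j}(w_{j}))$. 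The uniqueness established above, applied in $\mathbf{C}\in \mathcal{V}$, forces $h_{i}(w_{i})=h_{j}(w_{j})$, i.e. $(w_{i},w_{j})\in \pi _{ij}(\mathbf{F})$.

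Running this over all pairs, the Baker--Pixley decomposition gives $w\in F$; since $\mathbf{F}\vDash \varphi (\vec{x},w)$, this $w$ is the sought witness and the corresponding term is the desired $t$. The main obstacle is precisely the passage from the coordinatewise witness $w$ in the product to an honest element of $F$, and the two features that overcome it are the majority term, which reduces the test to pairs of coordinates, and congruence permutability via Fleischer's theorem, which lets the uniqueness of the $\varphi$-witness be transported to the shared quotient $\mathbf{C}$. I note that the hypothesis on $\mathcal{V}_{FSI}$ is needed only to guarantee that $\varphi$ defines a total function on each subdirectly irreducible factor $\mathbf{A}_{k}$.
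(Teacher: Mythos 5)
Your overall strategy --- reduce to the free algebra $\mathbf{F}=\mathbf{F}_{\mathcal{V}}(x_{1},\dots ,x_{n})$, produce the witness coordinatewise in a subdirect representation, and glue --- is the same as the paper's, and most of your steps are sound: the reduction of the conclusion to finding $c\in F$ with $\mathbf{F}\vDash \varphi (\vec{x},c)$, the spreading of the uniqueness quasi-identity to all of $\mathcal{V}$, and the Fleischer argument showing $(w_{i},w_{j})\in \pi _{ij}(\mathbf{F})$ for every pair $i,j$ are all correct. The gap is the gluing step. The Baker--Pixley principle that a subalgebra of a product is determined by its binary projections is a statement about products of \emph{finitely many} factors (its proof is an induction on the number of coordinates using the majority term). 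For an infinite index set $I$ it is false in general: the set $\{w\in \prod_{k}A_{k}:(w_{i},w_{j})\in \pi _{ij}(F)\ \text{for all}\ i,j\}$ is closed in the product topology and typically much larger than $F$. Since $\mathbf{F}_{\mathcal{V}}(\vec{x})$ is usually infinite (e.g.\ free Heyting algebras, one of the paper's listed examples), its subdirect decomposition into subdirectly irreducibles has infinitely many factors, so the equivalence ``$w\in F$ iff all pairwise projections of $w$ lie in $\pi _{ij}(F)$'' cannot be invoked. What your argument actually yields is that for every \emph{finite} $J\subseteq I$ there is a term $t_{J}$ with $t_{J}^{\mathbf{A}_{k}}(\pi _{k}\vec{x})=w_{k}$ for all $k\in J$; passing from this directed family to a single term working on all of $I$ is exactly the missing compactness/patching step, and it does not follow from a majority term alone.

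The paper closes this gap with different machinery: by the Gramaglia--Vaggione sheaf representation, every algebra in an arithmetical variety whose FSI members form a universal class is a \emph{global} subdirect product of FSI algebras, and by Volger's preservation theorem such global subdirect products preserve $(\forall \exists !\bigwedge p=q)$-sentences. The patchwork property built into global subdirect products is precisely what lets the coordinatewise witnesses be glued into an element of $\mathbf{F}$; an arbitrary subdirect representation, as in your argument, does not have enough closure for this. If you want to salvage your route you would need to replace the Baker--Pixley step by an argument that $F$ is ``closed'' in the relevant sense, which in effect amounts to reproving the global representation and preservation results.
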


\begin{proof}
By \cite{gr-va} every algebra of $\mathcal{V}$ is isomorphic to a global
subdirect product whose factors are finitely subdirectly irreducible. Since
global subdirect products preserve $(\forall \exists !\bigwedge p=q)$%
-sentences (see \cite{vo}), we have that $\mathcal{V}\vDash \forall \vec{x}%
\exists !z\ \varphi (\vec{x},z)$. Let $\mathbf{F}$ be the algebra of $%
\mathcal{V}$ freely generated by $x_{1},\ldots ,x_{n}$. Since $\mathbf{F}%
\vDash \exists !z\ \varphi (\vec{x},z)$, there exists a term $t(\vec{x})$
such that $\mathbf{F}\vDash \varphi (\vec{x},t(\vec{x}))$. It is easy to
check that $\mathcal{V}\vDash \forall \vec{x}\ \varphi (\vec{x},t(\vec{x}))$.
\end{proof}

For a class of $\mathcal{L}$-algebras $\mathcal{K}$ let $\mathbb{V}(\mathcal{%
K})$ denote the $\emph{variety}$\emph{\ generated by }$\mathcal{K}$.

\begin{theorem}
\label{Baker-Pixley para aritmeticas}Let $\mathcal{L}\subseteq \mathcal{L}%
^{\prime }$ be first order languages without relation symbols and let $f\in 
\mathcal{L}^{\prime }-\mathcal{L}$ be an $n$-ary function symbol. Let $%
\mathcal{K}$ be a class of $\mathcal{L}^{\prime }$-algebras satisfying:

\begin{enumerate}
\item[(a)] $\mathbb{V}(\mathcal{K}_{\mathcal{L}})$ is arithmetical and $%
\mathbb{V}(\mathcal{K}_{\mathcal{L}})_{FSI}$ is a universal class.

\item[(b)] If $\mathbf{A},\mathbf{B}\in \mathbb{P}_{u}(\mathcal{K})$ and $%
\mathbf{S}\leq \mathbf{A}_{\mathcal{L}}\times \mathbf{B}_{\mathcal{L}}$,
then $S$ is closed under $f^{\mathbf{A}}\times f^{\mathbf{B}}$.
\end{enumerate}

\noindent Then $f$ is representable by an $\mathcal{L}$-term in $\mathcal{K}$%
.
\end{theorem}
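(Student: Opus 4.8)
The plan is to reduce Theorem \ref{Baker-Pixley para aritmeticas} to the arithmetical-variety interpolation Lemma \ref{existencia de terminos en variedades aritmeticas} by first producing a single primitive positive formula that defines $f$ across all of $\mathcal{K}$, and then promoting that formula to an honest $\mathcal{L}$-term using the global subdirect product machinery.

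First I would invoke the existential definability results of Section~5. The hypothesis (b) says precisely that $f$ is preserved by subalgebras of binary products coming from $\mathbb{P}_u(\mathcal{K})$, which is the $m=2$ instance of the closure condition. The key observation is that with a majority/arithmetical structure available, the two-variable condition bootstraps to all finite arities exactly as in Lemma~\ref{existencia de terminos en general} and Theorem~\ref{Baker-Pixley para clases caso localmente finitas}: the arithmeticity of $\mathbb{V}(\mathcal{K}_{\mathcal{L}})$ furnishes a majority term (arithmetical $=$ congruence distributive $+$ congruence permutable, and congruence distributivity in a variety yields a majority term), so one can run the inductive argument $(\mathrm{I}_m)$ to conclude that $S$ is closed under $f^{\mathbf{A}_1}\times\cdots\times f^{\mathbf{A}_m}$ for every finite product. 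From the $m=2$ homomorphism-preservation phrasing I would first extract, via part (4) of Theorem~\ref{caso existencial}, a formula $\exists\vec{u}\,\psi(\vec{u},\vec{x},z)$ with $\psi\in\left[\bigwedge\mathrm{At}(\mathcal{L})\right]$ defining $\vec{f}$ in $\mathcal{K}$.

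Next I would transfer the problem into $\mathcal{V}=\mathbb{V}(\mathcal{K}_{\mathcal{L}})$ and the reduct $(\mathbf{A}_{\mathcal{L}},f^{\mathbf{A}})$. The primitive positive formula $\varphi(\vec{x},z)=\exists\vec{u}\,\psi$ defines $f$ on every member of $\mathcal{K}$, hence it satisfies $\mathcal{K}\vDash\forall\vec{x}\,\exists! z\,\varphi(\vec{x},z)$. Because $\mathcal{V}_{FSI}$ is universal and every finitely subdirectly irreducible quotient of a member of $\mathcal{K}$ is captured (using that $f$ descends compatibly to factors — this is where I expect to lean on preservation of $\psi$ by the relevant homomorphisms, paralleling Claim~3 in Proposition~\ref{allgebraicas de stone algebras copy(1)}), the existence-and-uniqueness sentence $\forall\vec{x}\,\exists! z\,\varphi(\vec{x},z)$ should hold throughout $\mathcal{V}_{FSI}$. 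At that point Lemma~\ref{existencia de terminos en variedades aritmeticas} applies directly: it yields an $\mathcal{L}$-term $t(\vec{x})$ with $\mathcal{V}\vDash\forall\vec{x}\,\varphi(\vec{x},t(\vec{x}))$, and by uniqueness $t$ represents $f$ on all of $\mathcal{K}$.

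The main obstacle I anticipate is the bookkeeping needed to verify that the primitive positive formula $\varphi$ really defines a \emph{total} function with the existence-uniqueness property on the finitely subdirectly irreducible factors, not merely on $\mathcal{K}$ itself. Totality on $\mathcal{V}_{FSI}$ is the delicate point, since a priori $\varphi$ only guarantees a value on structures in $\mathcal{K}$; I would secure it by showing that every $\mathbf{A}\in\mathcal{V}_{FSI}$ arises (up to isomorphism and ultrapowers) as a factor to which the closure hypothesis and Lemma~\ref{embedding a ultraproducto} apply, forcing $\exists z\,\varphi$ to hold and the defined value to be unique. Once totality and uniqueness are in hand on $\mathcal{V}_{FSI}$, the global-subdirect-product transfer inside Lemma~\ref{existencia de terminos en variedades aritmeticas} does the remaining work, and the term $t$ it produces is the desired representation.
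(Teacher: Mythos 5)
Your overall architecture (produce a defining formula for $f$, transfer the $\forall \vec{x}\,\exists !z$ property to $\mathbb{V}(\mathcal{K}_{\mathcal{L}})_{FSI}$, and finish with Lemma \ref{existencia de terminos en variedades aritmeticas}) matches the paper's, but there are two genuine gaps. The first is the choice of formula. Lemma \ref{existencia de terminos en variedades aritmeticas} requires $\varphi \in \left[ \bigwedge \mathrm{At}(\mathcal{L})\right]$: its proof rests on the fact that global subdirect products preserve sentences of the form $\forall \exists !\bigwedge p=q$, and this preservation does not survive an extra block of existential quantifiers in the matrix. The formula you extract from (4) of Theorem \ref{caso existencial} is $\exists \vec{u}\,\psi$ with $\psi \in \left[ \bigwedge \mathrm{At}(\mathcal{L})\right]$, so the lemma does not ``apply directly,'' and there is no quantifier elimination available to remove $\exists \vec{u}$. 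The paper avoids this by taking a different route through Theorem \ref{term valued with positive cases}: the two-point term interpolation obtained from hypothesis (b) yields $f=\left. t_{1}\right\vert _{\varphi _{1}}\cup \ldots \cup \left. t_{k}\right\vert _{\varphi _{k}}$ with each $\varphi _{i}\in \left[ \bigwedge \mathrm{At}(\mathcal{L})\right]$, hence a \emph{quantifier-free} defining formula in $\left[ \bigvee \bigwedge \mathrm{At}(\mathcal{L})\right]$, which Proposition \ref{traduccion} then converts to a conjunction of equations over the (universal, relatively congruence distributive) class $\mathbb{V}(\mathcal{K}_{\mathcal{L}})_{FSI}$. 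A related difficulty with your route: condition (b) of Theorem \ref{caso existencial}(4) quantifies over $\mathbf{A},\mathbf{B}\in \mathbb{P}_{u}\mathbb{P}_{\mathrm{fin}}(\mathcal{K})$, and even after bootstrapping the binary closure hypothesis to $m$-fold products via a majority term, closure of subalgebras under $f$ is not a first-order property, so pushing it through $\mathbb{P}_{u}$ needs an argument you have not supplied.

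The second gap is the one you yourself flag as ``delicate'': getting existence and uniqueness of the defined value on all of $\mathbb{V}(\mathcal{K}_{\mathcal{L}})_{FSI}$, not just on $\mathcal{K}$. This is where most of the paper's proof actually lives, and your sketch (``every $\mathbf{A}\in \mathcal{V}_{FSI}$ arises as a factor to which the closure hypothesis and Lemma \ref{embedding a ultraproducto} apply'') does not do it. The paper's steps are: (i) two-point term interpolation for $f$ on $\mathbb{SP}_{u}(\mathcal{K})$, lifted to $\mathbb{HSP}_{u}(\mathcal{K})$ by pulling points back along surjections; (ii) the consequence $\mathrm{Con}\,\mathbf{A}=\mathrm{Con}\,\mathbf{A}_{\mathcal{L}}$ for $\mathbf{A}\in \mathbb{HSP}_{u}(\mathcal{K})$, which is what lets $f$ descend to homomorphic images of reducts at all; (iii) the inclusion $\mathbb{HSP}_{u}(\mathcal{K}_{\mathcal{L}})\subseteq \left( \mathbb{HSP}_{u}(\mathcal{K})\right) _{\mathcal{L}}$; and then J\'{o}nsson's lemma to conclude $\mathbb{V}(\mathcal{K}_{\mathcal{L}})_{FSI}\subseteq (\mathbb{HSP}_{u}(\mathcal{K}))_{\mathcal{L}}$, so that the defining formula and the $\exists !$ sentence, established over $\mathbb{HSP}_{u}(\mathcal{K})$ rather than over $\mathcal{K}$, automatically hold on every finitely subdirectly irreducible. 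Without the congruence-compatibility step (ii) there is no way to equip the members of $\mathcal{V}_{FSI}$ with a copy of $f$, so the $\exists !$ claim on $\mathcal{V}_{FSI}$ has no content. You would need to supply all of (i)--(iii) to make your plan go through, at which point you are reproducing the paper's argument and should also switch to its quantifier-free formula to make the final lemma applicable.
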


\begin{proof}
W.l.o.g. we can suppose that $\mathcal{L}^{\prime }=\mathcal{L}\cup \{f\}$.
As in the proof of Theorem \ref{Baker-Pixley para clases caso localmente
finitas} we can see that given $\mathbf{A},\mathbf{B}\in \mathbb{SP}_{u}(%
\mathcal{K})$, $\vec{a}\in A^{n}$ and $\vec{b}\in B^{n}$, there is an $%
\mathcal{L}$-term $t(\vec{x})$ satisfying\emph{\ }$t^{\mathbf{A}}(\vec{a}%
)=f^{\mathbf{A}}(\vec{a})$ and $t^{\mathbf{B}}(\vec{b})=f^{\mathbf{B}}(\vec{b%
})$. We establish this property in a wider class.

\begin{enumerate}
\item[(i)] If $\mathbf{A},\mathbf{B}\in \mathbb{HSP}_{u}(\mathcal{K})$, $%
\vec{a}\in A^{n}$ and $\vec{b}\in B^{n}$, then there is an $\mathcal{L}$%
-term $t(\vec{x})$ such that $t^{\mathbf{A}}(\vec{a})=f^{\mathbf{A}}(\vec{a}%
) $ and $t^{\mathbf{B}}(\vec{b})=f^{\mathbf{B}}(\vec{b})$.
\end{enumerate}

\noindent Take $\mathbf{A},\mathbf{B}\in \mathbb{HSP}_{u}(\mathcal{K})$, and
fix $\vec{a}\in A^{n}$ and $\vec{b}\in B^{n}$. There are $\mathbf{A}_{1}\in 
\mathbb{S}\mathbb{P}_{u}(\mathcal{K})$, $\mathbf{B}_{1}\in \mathbb{SP}_{u}(%
\mathcal{K})$ and onto homomorphisms $F:\mathbf{A}_{1}\rightarrow \mathbf{A}$
and $G:\mathbf{B}_{1}\rightarrow \mathbf{B}$. Let $\vec{c}\in A_{1}^{n}$ and 
$\vec{d}\in B_{1}^{n}$ be such that $F(\vec{c})=\vec{a}$ and $G(\vec{d})=%
\vec{b}$. Let $t(\vec{x})$ be an $\mathcal{L}$-term such that $t^{\mathbf{A}%
_{1}}(\vec{c})=f^{\mathbf{A}_{1}}(\vec{c})$ and $t^{\mathbf{B}_{1}}(\vec{d}%
)=f^{\mathbf{B}_{1}}(\vec{d})$. Thus, we have that%
\begin{eqnarray*}
t^{\mathbf{A}}(\vec{a}) &=&t^{\mathbf{A}}(F(\vec{c})) \\
&=&F(t^{\mathbf{A}_{1}}(\vec{c})) \\
&=&F(f^{\mathbf{A}_{1}}(\vec{c})) \\
&=&f^{\mathbf{A}}(F(\vec{c})) \\
&=&f^{\mathbf{A}}(\vec{a})\text{,}
\end{eqnarray*}%
and similarly, $t^{\mathbf{B}}(\vec{b})=f^{\mathbf{B}}(\vec{b})$.

Next we prove that

\begin{enumerate}
\item[(ii)] $\func{Con}\mathbf{A}=\func{Con}\mathbf{A}_{\mathcal{L}}$, for
every $\mathbf{A}\in \mathbb{HSP}_{u}(\mathcal{K})$.
\end{enumerate}

\noindent Let $\mathbf{A}\in \mathbb{HSP}_{u}(\mathcal{K})$ and $\theta \in 
\func{Con}\mathbf{A}_{\mathcal{L}}$. We show that $\theta $ is compatible
with $f$. Suppose $\vec{a},\vec{b}\in A^{n}$ are such that $\vec{a}\ \theta
\ \vec{b}$. By (i) we have an $\mathcal{L}$-term $t\left( \vec{x}\right) $
such that $t^{\mathbf{A}}(\vec{a})=f^{\mathbf{A}}(\vec{a})$ and $t^{\mathbf{A%
}}(\vec{b})=f^{\mathbf{A}}(\vec{b})$. Clearly%
\begin{equation*}
f^{\mathbf{A}}(\vec{a})=t^{\mathbf{A}}(\vec{a})\ \theta \ t^{\mathbf{A}}(%
\vec{b})=f^{\mathbf{A}}(\vec{b})\text{.}
\end{equation*}

Now we shall see that

\begin{enumerate}
\item[(iii)] $\mathbb{HSP}_{u}(\mathcal{K}_{\mathcal{L}})\subseteq \left( 
\mathbb{HSP}_{u}(\mathcal{K})\right) _{\mathcal{L}}$.
\end{enumerate}

\noindent It is always the case that $\mathbb{P}_{u}(\mathcal{K}_{\mathcal{L}%
})=\mathbb{P}_{u}(\mathcal{K})_{\mathcal{L}}$, and (i) implies that $%
\mathcal{L}$-subreducts of algebras in $\mathbb{P}_{u}(\mathcal{K})$ are
closed under $f$. Thus $\mathbb{SP}_{u}(\mathcal{K}_{\mathcal{L}})\subseteq (%
\mathbb{SP}_{u}(\mathcal{K}))_{\mathcal{L}}$, and it only remains to see
that $\mathbb{H}(\mathbb{SP}_{u}(\mathcal{K})_{\mathcal{L}})\subseteq \left( 
\mathbb{HSP}_{u}(\mathcal{K})\right) _{\mathcal{L}}$, which is immediate by
(ii).

By J\'{o}nsson's lemma we have that $\mathbb{V}(\mathcal{K}_{\mathcal{L}%
})_{FSI}\subseteq \mathbb{HSP}_{u}(\mathcal{K}_{\mathcal{L}})$, and so (iii)
produces

\begin{enumerate}
\item[(iv)] $\mathbb{V}(\mathcal{K}_{\mathcal{L}})_{FSI}\subseteq (\mathbb{%
HSP}_{u}(\mathcal{K}))_{\mathcal{L}}$.
\end{enumerate}

Using that $\mathbb{P}_{u}\mathbb{HSP}_{u}(\mathcal{K})\subseteq \mathbb{HSP}%
_{u}(\mathcal{K})$ and (i), it is easy to check that $f$ and $\mathbb{HSP}%
_{u}(\mathcal{K})$ satisfy the conditions stated in (3) of Theorem \ref{term
valued with positive cases}. Thus we can conclude that

\begin{enumerate}
\item[(v)] $f=\left. t_{1}\right\vert _{\varphi _{1}}\cup \ldots \cup \left.
t_{k}\right\vert _{\varphi _{k}}$ in $\mathbb{HSP}_{u}(\mathcal{K})$, with
each $t_{i}$ an $\mathcal{L}$-term and each $\varphi _{i}$ in $\left[
\bigwedge \mathrm{At}(\mathcal{L})\right] $.
\end{enumerate}

\noindent Hence,

\begin{enumerate}
\item[(vi)] the formula $\varphi (\vec{x},z)=(\varphi _{1}(\vec{x})\wedge
z=t_{1}(\vec{x}))\vee \dots \vee (\varphi _{k}(\vec{x})\wedge z=t_{k}(\vec{x}%
))$ defines $f$ in $\mathbb{HSP}_{u}(\mathcal{K})$.
\end{enumerate}

\noindent Observe that (v) implies $\mathbb{HSP}_{u}(\mathcal{K})\vDash
\forall \vec{x}\exists !z\ \varphi (\vec{x},z)$, and by (iv) we obtain $%
\mathbb{V}(\mathcal{K}_{\mathcal{L}})_{FSI}\vDash \forall \vec{x}\exists !z\
\varphi (\vec{x},z)$. Since $\varphi \in \left[ \bigvee \bigwedge \mathrm{At}%
(\mathcal{L})\right] $, Proposition \ref{traduccion} says that there is a
formula $\psi \in \left[ \bigwedge \mathrm{At}(\mathcal{L})\right] $ such
that $\mathbb{V}(\mathcal{K}_{\mathcal{L}})_{FSI}\vDash \varphi
\leftrightarrow \psi $. Thus we have that $\mathbb{V}(\mathcal{K}_{\mathcal{L%
}})_{FSI}\vDash \forall \vec{x}\exists !z\ \psi (\vec{x},z)$ and by Lemma %
\ref{existencia de terminos en variedades aritmeticas} there is an $\mathcal{%
L}$-term $t(\vec{x})$ such that $\mathbb{V}(\mathcal{K}_{\mathcal{L}})\vDash
\forall \vec{x}\ \psi (\vec{x},t(\vec{x}))$. In particular,

\begin{enumerate}
\item[(vii)] $\mathbb{V}(\mathcal{K}_{\mathcal{L}})_{FSI}\vDash \forall \vec{%
x}\ \varphi (\vec{x},t(\vec{x}))$.
\end{enumerate}

Now, if we take $\mathbf{A}\in \mathbb{V}(\mathcal{K})_{FSI}$, by J\'{o}%
nsson's lemma $\mathbf{A}\in \mathbb{HSP}_{u}(\mathcal{K})$, and so by (ii) $%
\mathbf{A}$ and $\mathbf{A}_{\mathcal{L}}$ have the same congruences. Hence $%
\mathbf{A}_{\mathcal{L}}\in \mathbb{V}(\mathcal{K}_{\mathcal{L}})_{FSI}$,
and by (vii) $\mathbf{A}\vDash \forall \vec{x}\ \varphi (\vec{x},t(\vec{x}))$%
. Thus,

\begin{enumerate}
\item[(viii)] $\mathbb{V}(\mathcal{K})_{FSI}\vDash \forall \vec{x}\ \varphi (%
\vec{x},t(\vec{x}))$.
\end{enumerate}

Finally, as (vi) says that $\varphi (\vec{x},z)$ defines $f$ in $\mathbb{HSP}%
_{u}(\mathcal{K})$, it follows from (viii) that $\mathbb{V}(\mathcal{K}%
)_{FSI}\vDash \forall \vec{x}\ t(\vec{x})=f(\vec{x})$. So this identity
holds in $\mathbb{V}(\mathcal{K})$ and the theorem is proved.
\end{proof}

\begin{corollary}
Let $\mathcal{V}$ be an arithmetical variety such that $\mathcal{V}_{FSI}$
is universal. Let $\mathcal{K}\subseteq \mathcal{V}$ be a first order class
and suppose that $\psi (\vec{x},z)$ is a first order formula (in the
language of $\mathcal{V}$) which defines on each algebra $\mathbf{A}$ of $%
\mathcal{K}$ a function $f^{\mathbf{A}}:A^{n}\rightarrow A$. Assume that for
all $\mathbf{A},\mathbf{B}\in \mathcal{K}$ every subalgebra $\mathbf{S}\leq 
\mathbf{A}\times \mathbf{B}$ is closed under $f^{\mathbf{A}}\times f^{%
\mathbf{B}}$. Then $f$ is representable by a term in $\mathcal{K}$.
\end{corollary}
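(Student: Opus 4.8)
The plan is to reduce the statement to Theorem \ref{Baker-Pixley para aritmeticas} by moving the externally given function into the language. I would put $\mathcal{L}^{\prime }=\mathcal{L}\cup \{f\}$ with $f$ a new $n$-ary function symbol and form the expanded class $\mathcal{K}^{\prime }=\{(\mathbf{A},f^{\mathbf{A}}):\mathbf{A}\in \mathcal{K}\}$ of $\mathcal{L}^{\prime }$-algebras, where $f^{\mathbf{A}}$ is the operation that $\psi $ defines on $\mathbf{A}$. Then $(\mathcal{K}^{\prime })_{\mathcal{L}}=\mathcal{K}$, and an $\mathcal{L}$-term represents $f$ in $\mathcal{K}^{\prime }$ precisely when it represents $f$ in $\mathcal{K}$, so it suffices to verify hypotheses (a) and (b) of Theorem \ref{Baker-Pixley para aritmeticas} for $\mathcal{K}^{\prime }$.

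For (a), note first that $\mathbb{V}(\mathcal{K})\subseteq \mathcal{V}$, so $\mathbb{V}(\mathcal{K})$ is arithmetical because arithmeticity is inherited by subvarieties (the defining J\'{o}nsson and Mal'cev terms of $\mathcal{V}$ witness the same for the subvariety). For the universality of $\mathbb{V}(\mathcal{K})_{FSI}$, the key point is that being finitely subdirectly irreducible is an intrinsic property of an algebra's congruence lattice, namely meet-irreducibility of $\Delta $, and hence does not depend on the ambient variety. Since $\mathbb{V}(\mathcal{K})\subseteq \mathcal{V}$, this gives $\mathbb{V}(\mathcal{K})_{FSI}=\mathbb{V}(\mathcal{K})\cap \mathcal{V}_{FSI}$, an intersection of a variety (closed under $\mathbb{I},\mathbb{S},\mathbb{P}_{u}$) with a universal class (also closed under $\mathbb{I},\mathbb{S},\mathbb{P}_{u}$). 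The intersection is therefore closed under $\mathbb{I},\mathbb{S},\mathbb{P}_{u}$, and thus universal.

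The heart of the argument is (b), and the decisive observation is that $\mathcal{K}^{\prime }$ is an elementary class: since $\mathcal{K}$ is first order and $\psi $ defines $f$ on every member of $\mathcal{K}$, the class $\mathcal{K}^{\prime }$ is axiomatized by the sentences axiomatizing $\mathcal{K}$ together with $\forall \vec{x}\,\forall z\,(f(\vec{x})=z\leftrightarrow \psi (\vec{x},z))$. By {\L}o\'{s}'s theorem $\mathcal{K}^{\prime }$ is closed under ultraproducts, so $\mathbb{P}_{u}(\mathcal{K}^{\prime })\subseteq \mathcal{K}^{\prime }$. Consequently any $\mathbf{A},\mathbf{B}\in \mathbb{P}_{u}(\mathcal{K}^{\prime })$ are, up to isomorphism, of the form $(\mathbf{A}_{0},f^{\mathbf{A}_{0}})$ and $(\mathbf{B}_{0},f^{\mathbf{B}_{0}})$ with $\mathbf{A}_{0},\mathbf{B}_{0}\in \mathcal{K}$; then the closure hypothesis of the corollary, which is stated only for members of $\mathcal{K}$, applies verbatim to give that every $\mathbf{S}\leq \mathbf{A}_{\mathcal{L}}\times \mathbf{B}_{\mathcal{L}}$ is closed under $f^{\mathbf{A}}\times f^{\mathbf{B}}$. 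This is exactly condition (b).

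With (a) and (b) verified, Theorem \ref{Baker-Pixley para aritmeticas} produces an $\mathcal{L}$-term representing $f$ in $\mathcal{K}^{\prime }$, hence in $\mathcal{K}$, which is the desired conclusion. The one place where the hypotheses must be used with care --- and what I expect to be the main obstacle --- is recognizing that first-order definability of both $\mathcal{K}$ and $f$ forces $\mathcal{K}^{\prime }$ to be elementary; this is precisely what lets the closure assumption, granted only on $\mathcal{K}$, transfer to $\mathbb{P}_{u}(\mathcal{K}^{\prime })$ as the theorem demands. The remaining steps are the routine bookkeeping of the preservation operators $\mathbb{I},\mathbb{S},\mathbb{P}_{u}$.
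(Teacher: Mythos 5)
Your reduction is correct and is precisely the argument the paper intends: the corollary is stated without proof as an immediate application of Theorem \ref{Baker-Pixley para aritmeticas}, obtained by expanding the language with $f$ and checking (a) via $\mathbb{V}(\mathcal{K})\subseteq\mathcal{V}$ together with the intrinsic nature of finite subdirect irreducibility, and (b) via the elementarity of $\mathcal{K}'$. All the steps you flag as needing care (in particular that first-order definability of $\mathcal{K}$ and of $f$ forces $\mathbb{P}_{u}(\mathcal{K}')\subseteq\mathcal{K}'$, so the closure hypothesis transfers to ultraproducts) check out.
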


We believe it likely that the Baker-Pixley Theorem holds in scenarios other
than the two considered here (locally finite and arithmetical). A question
we were unable to answer is the following.

Let $\mathcal{K}$ be a first order axiomatizable class of $(\mathcal{L}\cup
\{f\})$-algebras with a majority $\mathcal{L}$-term and suppose that for any 
$\mathbf{A},\mathbf{B}\in \mathcal{K}$ every subalgebra of $\mathbf{A}_{%
\mathcal{L}}\times \mathbf{B}_{\mathcal{L}}$ is closed under $f^{\mathbf{A}%
\times \mathbf{B}}$. Is $f$ representable by an $\mathcal{L}$-term in $%
\mathcal{K}$?

\end{document}